\theoremstyle{plain}
\newtheorem{theorem}{Theorem}[section]
\newtheorem{proposition}[theorem]{Proposition}
\newtheorem{lemma}[theorem]{Lemma}
\newtheorem{question}[theorem]{Question}
\newtheorem{remark}[theorem]{Remark}
\newcommand{\be}{\begin{equation}}
\newcommand{\ene}{\end{equation}}
\newcommand{\br}{\begin{remark}}
\newcommand{\er}{\end{remark}}
\newcommand{\bl}{\begin{lem}}
\newcommand{\el}{\end{lem}}
\newcommand{\bcor}{\begin{cor}}
\newcommand{\ecor}{\end{cor}}
\newcommand{\bpro}{\begin{pro}}
\newcommand{\epro}{\end{pro}}
\newcommand{\ben}{\begin{enumerate}}
\newcommand{\een}{\end{enumerate}}
\newcommand{\bp}{\begin{proof}}
\newcommand{\ep}{\end{proof}}
\newcommand{\bpo}{\begin{pro}}
\newcommand{\epo}{\end{pro}}
\newcommand{\beq}{\begin{equation*}}
\newcommand{\eeq}{\end{equation*}}
\newcommand{\bear}{\begin{eqnarray}}
\newcommand{\eear}{\end{eqnarray}}
\newcommand{\beqar}{\begin{eqnarray*}}
\newcommand{\eeqar}{\end{eqnarray*}}
\newcommand{\bt}{\begin{theorem}}
\newcommand{\et}{\end{theorem}}
\newcommand{\bex}{\begin{excer}}
\newcommand{\eex}{\end{excer}}
\theoremstyle{definition}
\theoremstyle{remark}
\newtheorem*{con*}{Construction}
\newtheorem*{rem*}{Remark}
\newtheorem*{exam*}{Example}
\newtheorem*{exams*}{Examples}
\newtheorem*{thm*}{\bf Theorem}
\newtheorem*{que*}{Question}
\newtheorem*{Def*}{Definition}
\newtheorem*{Cons*}{Construction}
\newtheorem*{Lem*}{Lemma}
\newtheorem*{Conj*}{\bf Conjecture}
\newtheorem{Def}{Definition}[section]
\numberwithin{equation}{section} \numberwithin{figure}{section}
\begin{document}

\title{Existence and nonrelativistic limit of ground states to nonlinear Dirac equation
}
 
\author{Pan Chen$^{1}$, Yanheng Ding$^{3,4}$, Qi Guo$^{2}$ }

\maketitle

%%==================================%%
%% sample for unstructured abstract %%
%%==================================%%

\begin{center}
{\small $^1$ School of Mathematical Sciences, Shanghai Jiao Tong University, Shanghai, 200240, P. R. China       }

{\small $^{2}$ School of Mathematics, Renmin University of China, Beijing,  100872, P. R. China  }

  {\small $^3$ Academy of Mathematics and Systems Science, Chinese Academy of Sciences, Beijing, 100190, P. R. China  }

{\small $^4$ School of Mathematics, Jilin University, Changchun, 130012, P. R. China }
\end{center}

\noindent{ \bf Abstract  }:
{\small  This paper explores the existence and properties of ground states, including both energy and action ground states, for nonlinear Dirac equations with power-type potentials.
\begin{equation*}
-i c\sum\limits_{k=1}^3\alpha_k\partial_k u  +mc^2 \beta {u}- |{u}|^{p-2}{u}=\omega {u}.
\end{equation*}
We establish the existence of energy ground states and demonstrate that as the speed of light approaches infinity, both energy and action ground states converge to their counterparts in the nonlinear Schr\"odinger equation. Furthermore, we characterize the convergence rate of the ground state energy and investigate the equivalence between action and energy ground states.

% In this paper, we are concerned with the existence and nonrelativistic limit of energy and action ground states of the following nonlinear Dirac equation with power-type potentials 
% \begin{equation*}
% -i c\sum\limits_{k=1}^3\alpha_k\partial_k u  +mc^2 \beta {u}- |{u}|^{p-2}{u}=\omega {u}.
% \end{equation*}
% We first prove the existence of energy ground states. 
% We then show that, as $c\to \infty$, both the energy and action ground states 
% converge to the corresponding ground states of the nonlinear Schr\"odinger equation, and we characterize the convergence rate of the ground state energy.
% Furthermore, we investigate the relationship between
%  action and energy ground states of the nonlinear Dirac equation.

}

\noindent{\bf  Keywords} \quad  Nonlinear Dirac equations, Nonrelativistic limit, Ground states \\
\noindent{\bf 2020 MSC}\quad Primary 49J35; Secondary 35J50, 47J10, 81Q05.

\numberwithin{equation}{section}
\tableofcontents

\section{Introduction}

The nonlinear Dirac equation provides a relativistic framework for high-velocity fermions (e.g., electrons), with applications across atomic physics, condensed matter, and quantum field theory. Understanding ground states is essential for predicting their stability and dynamics \cite{ED11}. This paper investigates the existence and properties of ground states for nonlinear Dirac equations with power-type potentials, and bridges relativistic and nonrelativistic descriptions by examining the nonrelativistic limit and its connection to the nonlinear Schr\"odinger equation. In this work, the nonlinear Dirac equation under consideration takes the form
\begin{align}\label{Dirac}\tag{$\mathrm{NDE}_{\mathrm{\omega_c}}$}
\mathscr{D}_c u-|u|^{p-2}u=\omega_c  u,
\end{align}
where $\mathscr{D}_c:=-ic\alpha\cdot \nabla +mc^2\beta$ is the free Dirac operator, 
$u:\mathbb{R}^3\rightarrow \mathbb{C}^4$ is the Dirac wave function, $\omega_c\in \mathbb{R}$ is the frequency of the wave function, $c$ is the speed of light. Throughout the paper, we will consistently assume that the exponent $p$ lies within the interval $(2,3)$, which corresponds to the Sobolev subcritical case. Notably, when $p = 8/3$, this nonlinear term aligns with the exchange-correlation potential in the Relativistic Density Functional Theory at the Lieb-Oxford bound \cite{LLS22, LiebOxford} and is also referred to as the mass-critical exponent.
It is well known that the free Dirac operator $\mathscr{D}_c$ is self-adjoint in $L^2(\mathbb{R}^3,\mathbb{C}^4)$, with domain $H^1(\mathbb{R}^3,\mathbb{C}^4)$ and formal domain $H^{1/2}(\mathbb{R}^3,\mathbb{C}^4)$. Its spectrum is $(-\infty, -mc^2]\cup [mc^2,\infty)$, then  our working space $E_c$ can be defined by the completion of $\text{dom}(|\mathscr{D}_c|^{1/2})$ under the following inner product
$$
(u_1,u_2)_c:= \left(|\mathscr{D}_c|^{1/2}u_1, |\mathscr{D}_c|^{1/2}u_2\right)_{L^2},
$$
The induced norm is denoted by $\| u\|_c:=(u,u)_c^{1/2}$. 

\subsection{Ground States}
In physics, the ground states typically refer to the lowest-energy states among all  positive-energy configurations. There are two main types: action ground states, which are classical trajectories or instantons that extremize the action functional, and energy ground states, which are static eigenstates that minimize the Hamiltonian or energy functional.

For nonlinear Dirac equations, the study of action ground states involves fixing the frequency  $\omega_c$ within the spectral gap $(-mc^2, mc^2)$ and identifying the critical points of the associated functional on an appropriate constraint set. Problems formulated in this manner are known as {\it fixed-frequency problems}. Unlike the case of the Schr\"odinger equation and the pseudo-relativistic equation, the action functional 
\[
\mathcal{J}_{\omega_c}^c(u)=\|u^+\|_c^2-\|u^-\|_c^2-\omega_c\|u\|_{L^2}^2
-\frac{2}{p}\int_{\mathbb{R}^3 } |u|^p dx
\]
associated with \eqref{Dirac} is strongly indefinite, where $u^\pm$ is the projection of $u$ 
onto the subspace $E_c^\pm$. Since the negative space is infinite-dimensional, we consider its restriction to the following  reduced Nehari manifold
\[\mathcal{N}_{\omega_c}^c:=\left\{ u\in E_c^+: d\mathcal{J}_{\omega_c,red}^c(u) [u]=0 \right\},\]
where the reduced functional is defined as
$$
\mathcal{J}_{\omega_c,red}^c(u):=
\sup_{v\in E_c^-}\mathcal{J}_{\omega_c}^c(u+v),\quad u\in E_c^+,
$$
see Section \ref{section5} for more details.
An alternative approach to proving the existence of solutions to nonlinear Dirac equations involves studying the critical points of the following energy functional
\[
\mathcal{I}^{c}(u) := \|u^+\|_c^2 - \|u^-\|_c^2 - \frac{2}{p} \int_{\mathbb{R}^3} |u|^p  dx 
\]  
constrained to the $L^2$-sphere  
\[
\mathcal{S} := \left\{ u \in H^{1/2}(\mathbb{R}^3,\mathbb{C}^4) : \|u\|_{L^2} = 1 \right\}.
\]  
Such problems are known as {\it prescribed mass problems} or {\it normalized problems}. 
In this paper, we adopt alternative definitions of action and energy ground states from the perspective of variational methods.

 \begin{Def}[Ground states of Dirac equation]Using the notations introduced above,
  \begin{enumerate}
  \item (action ground state) A function $u_c\in E_c$ is called a (relativistic) action ground state if $u_c^+$ is a minimizer of the following minimization problem:
  \[e_{\omega_c,act}^c:=\inf_{u\in\mathcal{N}_{\omega_c}^c}\mathcal{J}_{\omega_c,red}^c(u),\]
  and $u_c^-$ is a solution of the maximization
  problem:
  $$
\mathcal{J}_{\omega_c,red}^c(u_c^+):=
\sup_{v\in E_c^-}\mathcal{J}_{\omega_c}^c(u_c^++v).
$$
  \item (energy ground state) A function $u_c\in \mathcal{S}$ is 
  called an (relativistic) energy ground state if 
$u_c$ is a solution of the following minimization problem:
\begin{equation*}\label{new_a}
   e_{ene}^c:=\inf \left\{ \mathcal{I}^{c}(u): \|u\|^2_{L^2}=1,~\mathcal{I}^c(u)>0,  ~d\mathcal{I}^{c}|_{\mathcal{S}}(u)=0  \right\},
\end{equation*}
  \end{enumerate}
\end{Def}
Based on the above definition, the energy ground state $u_c$ for {\it normalized problem} must satisfy the following Dirac equation,
\begin{equation}\label{Dirac_e}\tag{$\mathrm{NDE}_{\mathrm{ene}}$}
    \begin{cases}
		 \mathscr{D}_cu_c- |u_c|^{p-2}u_c= \omega_cu_c
      \\
      \displaystyle\int_{\mathbb{R}^3}|u_c|^2 =1,
    \end{cases}
\end{equation}
with Lagrange multiplier $\omega_c$. 
 The energy ground state is a critical point of the energy functional on the constraint set, where the energy reaches its positive minimum. Whether the positivity condition can be removed is open, so we maintain this assumption.

In the nonrelativistic case, we introduce the following Schr\"odinger equation:
\begin{align}\label{laplace}\tag{$\mathrm{NSE}_{\mathrm{\lambda}}$}
- \frac{\Delta}{2m} f +\lambda f=|f|^{p-2}f .
\end{align}
Here, $\lambda>0$ is the frequency of the wave function $f:\mathbb{R}^3\rightarrow \mathbb{C}^2$.  
% The wave functions in \eqref{laplace} and \eqref{Dirac} differ primarily in their phase spaces, i.e. the phase space of $f$ is $\mathbb{C}^2$, whereas the phase space of $u$ is $\mathbb{C}^4$. 
Similarly, one can study the action ground state of \eqref{laplace}
by minimizing the action functional 
% $\mathcal{J}^\infty_\lambda: H^{1}(\mathbb{R}^3, \mathbb{C}^2)\to \mathbb{R}$:
\begin{equation*}
	\mathcal{J}^\infty_\lambda(f):= \frac{1}{2m}\int_{\mathbb{R}^3} |\nabla
	f|^2 dx + \lambda\int_{\mathbb{R}^3} | f|^2 dx-\frac{2}{p}\int_{\mathbb{R}^3 } |f|^p dx
\end{equation*}
on the following Nehari manifold
\begin{equation*}
	\mathcal{N}^\infty_\lambda:= \left\{f\in H^{1}(\mathbb{R}^3,\mathbb{C}^2) \setminus
	\{0\}: d\mathcal{J}^\infty_\lambda(f)[f] = 0 \right\}.
\end{equation*}
% $\mathcal{I}^\infty : H^{1}(\mathbb{R}^3, \mathbb{C}^2)\to \mathbb{R}$
The corresponding {\it normalized problem} for the nonlinear Schr\"odinger equation can be obtained by studying the following energy functional
\begin{equation*}
	\mathcal{I}^\infty(f):= \frac{1}{2m}\int_{\mathbb{R}^3} |\nabla
	f|^2 dx -\frac{2}{p}\int_{\mathbb{R}^3 } |f|^p dx
\end{equation*}
on the $L^2$-sphere
\begin{equation*}
	    \mathcal{S}'= \left\{f\in H^{1}(\mathbb{R}^3, \mathbb{C}^2):
	    \|f\|_{L^2}=1\right\}.
\end{equation*}
This minimization problem is well defined since we restrict the index $p$ to the interval $(2, 3)$. Many references study action and energy ground states for the nonlinear Schrödinger equation, as well as the equivalence between them. We refer interested readers to \cite{DST23} as a starting point for exploring related results.
\begin{Def}[Ground states of Schrödinger equation]
  Using the notations introduced above,
  \begin{enumerate}
\item (action ground state) A function $f$ is
  called a (nonrelativistic) action ground state of
\eqref{laplace} if
  \begin{equation*}
    \mathcal{J}^\infty_{\lambda}(f)=e_{\lambda,act}^\infty:=\inf_{v \in \mathcal{N}^\infty_{\lambda}}\mathcal{J}^\infty_{\lambda}(v).
  \end{equation*}
  \item (energy ground state) A function $f\in \mathcal{S}'$ is
  called an (nonrelativistic) energy ground state if 
$f$ is a solution of the following minimization problem:
\begin{equation*} 
  \mathcal{I}^\infty(f) =e_{ene}^\infty:= \inf_{v\in \mathcal{S}'}\mathcal{I}^\infty(v).
\end{equation*}
  \end{enumerate}
\end{Def}
In the nonrelativistic case, energy ground state $f$ of Schr\"odinger equations must satisfy the following equation
\begin{equation}
	\label{laplace_e}\tag{$\mathrm{NSE}_{\mathrm{ene}}$}
    \begin{cases}
      - \frac{\Delta}{2m} f +\lambda f=|f|^{p-2}f 
      \\
      \displaystyle\int_{\mathbb{R}^3}|f|^2 =1,
    \end{cases}
\end{equation}
with Lagrange multiplier $\lambda$. 
%The existence of action ground states and energy ground states for \eqref{laplace}, 
%as well as the existence of action ground state for \eqref{Dirac}, can be established
% through standard minimization methods on (generalized) Nehari manifold 
%and constrained variational techniques; we refer to \cite{dyh, MR2768820,MR1430506} for details and omit them here.
Both action and energy ground states play central roles in quantum theory, and natural questions are whether such ground states exist in relativistic and nonrelativistic models and how the different notions of ground state relate to each other.
% In the spirit of the key questions asked in the relativistic models, it is natural to ask 
% whether \eqref{Dirac}(or \eqref{Dirac_e}) admits an action (or energy) ground state and, 
% if so, what's the relationship between relativistic 
% action (energy) ground states  and   nonrelativistic action (energy) ground states. 
More precisely, one may ask the following questions:
\begin{question}\label{Q1}
  \normalfont % 或者使用 \rmfamily 或 \upshape
  \quad
  \begin{enumerate}
    \item Do action ground states exist for the fixed-frequency problem \eqref{Dirac}, and do they converge to those of \eqref{laplace} in the nonrelativistic limit?
    \item Regarding the prescribed mass problem \eqref{Dirac_e}, do energy ground states exist and converge to those of \eqref{laplace_e} in the nonrelativistic limit?
    \item What is the convergence rate of the ground state energies \(e^{c}_{\omega_c,{act}}\) and \(e^{c}_{{ene}}\) to their nonrelativistic counterparts as \(c \to \infty\)?
    \item If an energy ground state exists for \eqref{Dirac_e}, how is it related to the action ground states of \eqref{Dirac}?
    \end{enumerate}
\end{question}

 \begin{remark}
     \begin{itemize}
         \item[(a)] Regarding question (1), the existence of action ground states for \eqref{Dirac} can be established using standard minimization methods on the reduced Nehari manifold, as detailed in \cite{dyh}. Currently, no literature addresses the relationship between relativistic and nonrelativistic action ground states. To explore this connection, we'll use a concentration-compactness argument as outlined in \cite{MR778970}.
         
         \item[(b)] Regarding question (2), Coti Zelati and Nolasco established the existence of energy ground states for the nonlinear Dirac equation with $2<p\leq 8/3$, as well as for Hartree-type nonlinearity. Their findings are detailed in \cite{CN19, CN25, Nolasco}. In their framework, the speed of light is set to \(c=1\) with the requirement that the nonlinear term is sufficiently small, which aligns with our conditions as explained by the following statement. For any $u\in H^{1/2}(\mathbb{R}^3, \mathbb{C}^4)$, if we set $\tilde{u}(x)=c^{-3/2}u(c^{-1}x)$, then $$\mathcal{I}^{1,c^{-1}}(\tilde{u}):=\|\tilde{u}^+\|^2_1-\|\tilde{u}^-\|_1^2-2p^{-1}c^{3p/2-5}\int_{\mathbb{R}^3 } |\tilde{u}|^p dx=c^{-2} \mathcal{I}^c(u). $$  
         Recent research has extensively explored the relationship between energy ground states of the pseudo-relativistic (Hartree) and nonrelativistic Schr\"odinger equations (see \cite{arXiv:2505.05917, MR4430585, MR2561169}). Some work also addresses the connection between Dirac and Schr\"odinger ground states under $L^2$-critical or subcritical growth (see \cite{CDGW24, Dolbeault24, ES01}). However, for the $L^2$-supercritical case $8/3 < p < 3$, the existence of energy ground states and their relation to Schr\"odinger ground states remain open. Tackling this gap is a primary goal of this paper.  
         
         \item[(c)] Regarding question (3), in \cite{arXiv:2505.05917}, the authors showed that the convergence rate of the ground state energy in the nonrelativistic limit for the pseudo-relativistic Hartree equation is $1/c^2$, based on a Taylor expansion of the pseudo-relativistic operator: \[\sqrt{-c^2\Delta+m^2c^4}-mc^2=-\frac{\Delta}{2m}+ \mathcal{O}\left(\frac{1}{c^2}\right).\]  
         Nonlinear Dirac problems are more challenging due to the unbounded spectrum of the Dirac operator. In \cite{arXiv:2503.21405}, refined projection estimates were used to establish, for the first time, a convergence rate of \(1/c^2\) for the Dirac-Fock ground state energy. Similarly, this paper shows the same \(1/c^2\) rate for \eqref{Dirac} and \eqref{Dirac_e}, based on linearizing the energy functional around the ground state.

        \item[(d)]  Regarding question (4), in the nonrelativistic case, the relationship between the action  ground states of \eqref{laplace} and the energy ground states of \eqref{laplace_e} has been studied in \cite{DST23}. Inspired by this work, this paper investigates the relationship between the action ground states of \eqref{Dirac} and the energy ground states of \eqref{Dirac_e}.
     \end{itemize}
 \end{remark}

  The purpose of this paper is to complete the following diagram, where the black arrows represent existing results, and the red arrows indicate the problems to be addressed in this paper.

\begin{figure}[htbp]
  \centering
  \begin{tikzpicture}[
    every node/.style={text width=3cm, align=center}
  ]
  
    % 定义节点位置
    \node (tl) at (0,0) {Action Ground States of \eqref{Dirac}};
    \node (tr) at (7,0) {Action Ground States of \eqref{laplace}};
    \node (bl) at (0,-4) {Energy Ground States of \eqref{Dirac_e}};
    \node (br) at (7,-4) {Energy Ground States of \eqref{laplace_e}};
    \node (center) at (3.5,-2) {Existence};
    
    % 绘制原有箭头
    \draw[->, red, thick] (tl) -- node[above, black] {Nonrelativistic Limits} (tr);
    \draw[->, red, thick] (bl) -- node[below, black] {Nonrelativistic Limits} (br);
    
    % 左侧等价关系 - 使用双向箭头 ⇔
    \draw[<->, red, thick, double, double distance=1.5pt] (tl) -- node[left, black] {} (bl);
    
    % 右侧等价关系 - 使用双向箭头 ⇔
    \draw[<->, thick, double, double distance=1.5pt] (tr) -- node[right, black] {} (br);
    
    % 绘制从中心到四个角的箭头
    \draw[->, thick] (center) -- (tl);
    \draw[->, thick] (center) -- (tr);
    \draw[->, red, thick] (center) -- (bl);
    \draw[->, thick] (center) -- (br);
  \end{tikzpicture}
  \caption{Diagram showing the relationships between the ground states. The double-headed arrows represent equivalences}
  \label{fig:diagram}
  \end{figure}
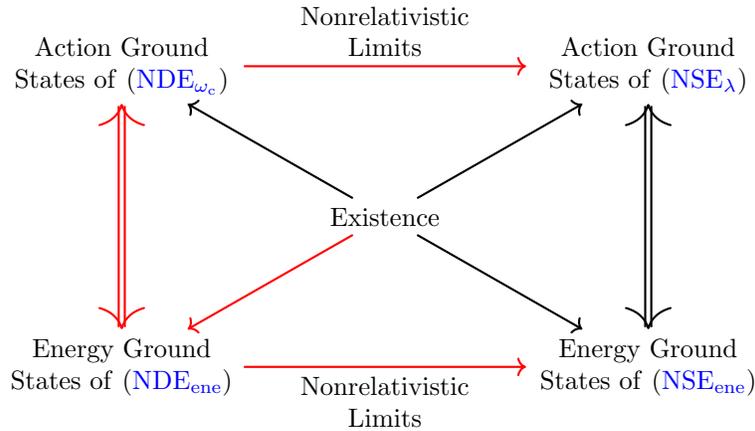

\subsection{Main Results}
We address Question \ref{Q1} with four main theorems: first, establishing the existence of energy ground states for \eqref{Dirac_e}; second and third, analyzing the relationship and convergence between relativistic and nonrelativistic energy (and action) ground states; and fourth, examining the consistency between the action ground states of \eqref{Dirac} and the energy ground states of \eqref{Dirac_e}. Our initial result is as follows:

% We address Question \ref{Q1} through four main theorems. First, 
% we establish the existence of energy ground states for \eqref{Dirac_e}. 
% The second and third main theorems explore the relationship between relativistic 
% and nonrelativistic energy (and action) ground states. This includes a comparison 
% of their roles in describing physical phenomena, as well as the conditions under 
% which they converge or approximate one another. The fourth main theorem investigates
%  the consistency between the action ground states of \eqref{Dirac}
%  and the energy ground states of \eqref{Dirac_e}. Our first result is stated as follows:
\begin{theorem}[Existence and properties of energy ground states]\label{them:1.1}
  \normalfont
  Let $ p\in (2,3) $, then there exists $c_0>0$, such that for $c>c_0$, the following results are valid.
  \begin{enumerate}
      \item (Existence) There exists $\omega_c\in (0,mc^2)$ and a function $ u_c\in  H^{1/2}(\mathbb{R}^3,\mathbb{C}^{4})$ solves \eqref{Dirac_e}. Moreover,
  $u_c$ is a energy ground state of  \eqref{Dirac_e}.
In addition, there holds
  $$-\infty < \liminf\limits_{c\to \infty} (\omega_c-mc^2)\leq \limsup\limits_{c\to \infty} (\omega_c-mc^2)< 0.$$
  \item (Exponential Decay)
 There exist $\delta>0$ and $C(\delta) >0$ independent of $c$,  such that
  $$
|P_\infty^+u_c(x)|\leq C(\delta)e^{-\delta|x|}, \quad
|P_\infty^-u_c(x)|\leq \frac{C(\delta)}{c}e^{-\delta|x|},
$$
where $P_\infty^{+}$ $(P_{\infty}^-)$  project onto the first two (last two) components, respectively.
\item (Uniqueness of Lagrange multiplier) The  multiplier $\omega_c$ associated with the same energy ground state is unique, except for a countable set $\Xi$ of $c$.
  \end{enumerate}
\end{theorem}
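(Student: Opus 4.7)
\emph{Overall strategy.} I would treat items (1)--(3) as instances of a singular perturbation of \ref{laplace} as $c\to\infty$, combining a Lyapunov--Schmidt reduction adapted to the $SU(2)$-symmetry with the uniqueness and nondegeneracy of the Schr\"odinger ground state $h$ of \ref{NSE2}.

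\emph{Step (1): Existence and bounds on $\omega_c$.} Set $\varepsilon:=1/c$ and rescale $u(x)=c^{3/2}v(cx)$; this turns $\mathcal{I}^c$ into a functional whose leading-order part, restricted to the upper two spinor components $\varphi$, is the nonrelativistic Schr\"odinger energy, while the block form of $\mathscr{D}_c$ forces the lower components $\chi$ to be of order $\varepsilon$. I would linearise around $(h,0)^T\in\mathbb{C}^2\oplus\mathbb{C}^2$. Since the Schr\"odinger linearisation at $h$ has kernel exactly the translations plus the $SU(2)$-orbit, the implicit function theorem in the $L^2$-orthogonal complement of this finite-dimensional kernel produces, for each $c>c_0$, a smooth family $u_c=(h,0)^T+O(1/c)$ together with Lagrange multiplier $\omega_c=mc^2-\nu_\infty+O(1/c)$, where $\nu_\infty>0$ is the frequency of $h$. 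This already delivers the two-sided bound on $\omega_c-mc^2$. Uniform $L^p$ bounds give $\mathcal{I}^c(u_c)\to\tfrac{p-2}{p}\|h\|_{L^p}^p>0$, so $u_c$ has positive energy, and a concentration-compactness argument on sequences of critical points of $\mathcal{I}^c|_{\mathcal{S}}$ with positive critical value identifies $u_c$ as the minimiser.

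\emph{Step (2): Exponential decay.} From (1), uniformly in $c$ large, $mc^2-\omega_c\geqslant \nu_\infty/2$ and $mc^2+\omega_c\sim 2mc^2$. Eliminating the small component via the Dirac block structure, $\varphi:=P_\infty^+u_c$ solves
\begin{equation*}
\Bigl(-\tfrac{c^2}{mc^2+\omega_c}\Delta+(mc^2-\omega_c)\Bigr)\varphi = |u_c|^{p-2}\varphi + O(1/c^2),
\end{equation*}
a uniformly (in $c$) coercive Schr\"odinger-type equation. An Agmon/Combes--Thomas weighted estimate then yields $|\varphi(x)|\leqslant C(\delta)e^{-\delta|x|}$. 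Feeding this into the small-component relation $\chi=(mc^2+\omega_c)^{-1}(-ic\,\sigma\cdot\nabla)\varphi+\text{l.o.t.}$ gives the improved decay $|\chi(x)|\leqslant C(\delta)c^{-1}e^{-\delta|x|}$.

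\emph{Step (3): Uniqueness of $\omega_c$.} This I expect to be the main obstacle. For each large $c$ and $\omega$ in a neighbourhood of $mc^2-\nu_\infty$, apply the same Lyapunov--Schmidt reduction to the unconstrained equation $\mathscr{D}_cu-|u|^{p-2}u=\omega u$ to produce a real-analytic local branch $u_{c,\omega}$; set $N(c,\omega):=\|u_{c,\omega}\|_{L^2}^2$. As $c\to\infty$, $N(\infty,\omega)$ reduces to the $L^2$-mass of the Schr\"odinger ground state of \ref{NSE2} at frequency $\nu=mc^2-\omega$, which by an explicit scaling argument is strictly monotone in $\nu$ for $p\in(2,10/3)$, in particular on our range $p\in(2,3)$. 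Real analyticity in $\omega$ transports this monotonicity to all large $c$, so $N(c,\cdot)=1$ admits a unique solution $\omega_c$ within this branch. A competing ground state must lie on a different analytic branch, and the coincidence of critical values across branches corresponds to zeros of a real-analytic function $\Phi$ of $c$ on $(c_0,\infty)$; by analyticity this zero set is either all of $(c_0,\infty)$ (ruled out by the asymptotic characterisation $\omega_c-mc^2\to-\nu_\infty$ from (1)) or discrete, which yields the countable exceptional set $\Xi$.
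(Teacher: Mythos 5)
Your strategy is fundamentally different from the paper's: you propose a Lyapunov--Schmidt reduction around $(h,0)^T$, whereas the paper works purely variationally. It restricts to the open set $\mathcal{O}_c^+$ (the key device for handling $p\in(8/3,3)$), maximizes $\mathcal{I}^{c,\tau}$ over the one-codimensional slices $S_W$, minimizes the reduced functional $\mathcal{E}^{c,\tau}$ via concentration--compactness with strict subadditivity, and obtains uniqueness of $\omega_c$ through a Lenzmann-type concavity argument: $\tau\mapsto\tau e_c(\tau^{1/\theta})$ is strictly concave, hence differentiable off a countable set, with one-sided derivatives pinning $\omega_{c,\tau}$, and the scaling identity $\omega_{c,1}=c^{-2}\omega_{1,c^{-1}}$ translates the countable exceptional set in $\tau$ into a countable exceptional set in $c$. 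No implicit-function branch is ever constructed.

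Your step (3) has a genuine gap. You invoke a \emph{real-analytic} branch $\omega\mapsto u_{c,\omega}$ and a real-analytic function $\Phi$ of $c$, but the Nemytskii map $u\mapsto|u|^{p-2}u$ is not real-analytic for $p\in(2,3)$: near $u=0$ it is only $C^1$ with $(p-2)$-H\"older derivative. The implicit function theorem therefore delivers at best a $C^1$ branch, and the zero set of a merely $C^1$ function need not be discrete, so the conclusion does not follow. This is precisely the obstruction the paper sidesteps by using concavity (a real-variable structure available regardless of analyticity) rather than analytic continuation; the paper even remarks that eliminating the countable set would require nondegeneracy results for the Dirac problem that are not available. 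Your step (1) also leaves a hole: Lyapunov--Schmidt produces \emph{one} solution near $(h,0)^T$, but the theorem asserts it is a ground state, i.e.\ a minimizer of $\mathcal{I}^c$ over all positive-energy critical points of $\mathcal{I}^c|_{\mathcal{S}}$. ``Concentration--compactness on sequences of critical points'' is not a mechanism for this, since concentration--compactness applies to minimizing sequences of a definite functional, not to unstructured families of critical points; the paper handles this by the Pohozaev identity combined with Lemma~\ref{lemm:3.1*}, which shows every positive-energy critical point $v_c$ satisfies $\mathcal{I}^c(v_c)=\mathcal{E}^{c,1}(v_c^+/\|v_c^+\|_{L^2})\geqslant\mathcal{I}^c(u_c)$. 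Your step (2) is a reasonable alternative to the paper's Green-function estimate and would likely go through.
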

 
\begin{remark}
\begin{itemize}
    \item [(1)] The existence of Theorem \ref{them:1.1} for \( p \in (2, 8/3] \) follows directly from recent results by Coti Zelati and Nolasco in \cite{CN25}. Therefore, for the existence results, we only need to discuss \( p \in (8/3, 3) \), which is the case of $L^2$-supercritical.
    \item[(2)] The proof of uniqueness of the multiplier \( \omega_c \) 
    is inspired by Lenzmann \cite{MR2561169}, 
    we show that the uniqueness of \( \omega_c \) in \eqref{Dirac_e} after 
    removing a countable set. Furthermore, Guo and Zeng in \cite{GuoZeng20} eliminated this condition for pseudo-relativistic Hartree equations using nondegeneracy of ground state, which we lack for \eqref{Dirac_e}, thus, we cannot remove this condition here.
\end{itemize}
 
\end{remark}

Our second main theorem reveals the nonrelativistic limit of the energy ground state \(u_c\) of \eqref{Dirac_e} as \(c \to \infty\). The positive and negative parts are \(u_c^+ = P_c^+ u_c\) and \(u_c^- = P_c^- u_c\), while the first two and last two components are \(f_c = P_\infty^+ u_c\) and \(g_c = P_\infty^- u_c\).

\begin{theorem}[Nonrelativistic limit of  energy ground states]\label{them:1.2}
  \normalfont
  Under the assumptions of Theorem \ref{them:1.1}, for every \(c > c_0\), up to a subsequence and 
   translation, the following asymptotic properties hold. 
   \begin{enumerate}
    \item     
\[\left\|u_c^+ - (f_\infty, 0)^T\right\|_{H^1} \to 0, \qquad \left\|u_c^-\right\|_{H^1} = \mathcal{O}\left(\frac{1}{c^2}\right),\] 
where \(f_\infty\in H^1(\mathbb{R}^3, \mathbb{C}^2)\) is an energy ground state of \eqref{laplace_e}.
  \item \[
  \left\|f_c - f_\infty\right\|_{H^1} \to 0, \qquad \left\|g_c\right\|_{H^1} = \mathcal{O}\left(\frac{1}{c}\right),
  \] 
  \item  $$e_{ene}^c= e^\infty_{ene}+mc^2+ \mathcal{O}\left(\frac{1}{c^2}\right).$$
  \end{enumerate}
 \end{theorem}

The above theorem shows that, in the nonrelativistic limit, the positive spectral part or the first two components of the Dirac energy ground states converge to those of the nonlinear Schr\"odinger equation, while the negative part and the last two components vanish. This indicates that the nonlinear Dirac equation can be viewed as a relativistic extension, with both equations giving the same results as the particle velocity approaches zero. It also reveals the rate of nonrelativistic correction terms. 

Similarly, we obtain the following result for the nonrelativistic limit of the action ground states of \eqref{Dirac}, and we will denote \(\{u_c\}\)  by the action ground states of \eqref{Dirac}.

\begin{theorem}[Nonrelativistic limit of  action ground states]\label{them:1.3}
  \normalfont
  For each $\lambda>0$, $\omega_c= mc^2-\lambda$, $c>\sqrt{\frac{\lambda}{2m}}$, up to a subsequence and translation, the following asymptotic properties hold.
   \begin{enumerate}
    \item 
   \[
  \left\|u_c^+ - (f_\infty, 0)^T\right\|_{H^1} \to 0, \qquad \left\|u_c^-\right\|_{H^1} = \mathcal{O}\left(\frac{1}{c^2}\right),
  \] 
  where \(f_\infty\in H^1(\mathbb{R}^3, \mathbb{C}^2)\) is an action ground state of \eqref{laplace}.
  \item 
  \[
  \left\|f_c - f_\infty\right\|_{H^1} \to 0, \qquad \left\|g_c\right\|_{H^1} = \mathcal{O}\left(\frac{1}{c}\right).
  \]
  \item 
  $$
  e_{\omega_c, act}^c = e_{\lambda, act}^\infty+ \mathcal{O}\left( \frac{1}{c^2}\right).
  $$
  \end{enumerate}

 \end{theorem}

 \begin{remark}\label{rem_1.7}
\begin{itemize}
  \item [(a)] Theorem \ref{them:1.2} and Theorem \ref{them:1.3} describes the asymptotic behavior of the first (last) 
  two components and the positive (negative) energy parts of the energy ground 
  state solutions, respectively. From the perspective of Lemma \ref{nar}, 
  these two types of asymptotic behavior are in fact equivalent provided  
  \[
  \sup_{c > c_0} \|u_c\|_{H^2} < \infty.
  \]
  \item[(b)] When analyzing the asymptotic behavior of \(u_c^+\), 
  which is a four-component function, it is necessary to extend
   \(f_\infty\) to a four component vector \((f_\infty, 0)^T\).
   \item[(c)]Owing to the fact that the action ground state $f$ of equation \eqref{laplace} can be constructed from the unique positive radial solution \(v\) of the scalar field equation  
\[
-\Delta v + v = v^{p-1}
\]  
via the action of \(SU(2)\), see appendix \ref{a_B}, the convergence of \(f_c\) in Theorem \ref{them:1.3} (2) is given by  
\[
\|f_c(\cdot + x_c) - \gamma_c \cdot (v, 0)^T\|_{H^1} \to 0,
\]  
for some \(\gamma_c \in SU(2)\) and translation parameters \(x_c \in \mathbb{R}^3\).
\item[(d)] The convergence rate of the ground state energy for the pseudo-relativistic Hartree (Schrödinger) equation can be directly derived from the Taylor expansion of the pseudo-relativistic operator, as shown in \cite{arXiv:2505.05917}. In contrast, establishing this rate for the Dirac equation is more complex due to its unbounded spectrum. 
% We first need a maximization mapping to reduce the energy functional to a semi-definite form, which complicates the variational characterization of ground states and makes proving the convergence rate significantly more challenging.

% The convergence rate of the ground state energy for the pseudo-relativistic Hartree (Schrödinger) equation can be directly obtained via the Taylor expansion of the pseudo-relativistic operator, as seen in \cite{arXiv:2505.05917}. In contrast, proving the convergence rate of the ground state energy for the Dirac equation is more intricate. The primary difficulty arises from the fact that the spectrum of the Dirac operator is unbounded from below., we first require a maximization mapping to reduce the energy functional to a semi-definite one. The complexity of this maximization mapping complicates the variational characterization of the ground state solutions for the Dirac equation, thereby making the proof of the convergence rate of the ground state energy considerably more challenging.
\end{itemize}
   
\end{remark}

Our final theorem addresses the consistency between the action ground state of \eqref{Dirac} and the energy ground state of \eqref{Dirac_e}. Let \(EG_c\) be the set of all energy ground states of \eqref{Dirac_e} with Lagrange multiplier \(\omega_c \in (0, mc^2)\), and \(AG_{\omega_c}\) the set of action ground states of \eqref{Dirac}.
  
\begin{theorem}\label{them:1.4}
  \normalfont
  Under the assumptions of Theorem \ref{them:1.1}, for each $c\in (c_0, + \infty)\backslash \Xi$, let $\omega_c$ be the Lagrange multiplier determined in \eqref{Dirac_e}. Then we have
  $$EG_c= AG_{\omega_c}.$$
\end{theorem}

Our relativistic approach to the energy ground state of \eqref{Dirac_e} offers a new framework for normalized Dirac wave functions. Extending \cite{CDGW24} and \cite{CN25}, we cover the full Sobolev subcritical range $p\in (2,3)$ (previously $p\in(2,8/3]$). For $p\in (8/3,3)$ we cannot reduce the energy to the whole \(E_c^+\cap\mathcal{S}\); instead, we restrict to a suitable open subset where the reduced functional is bounded below. The key to compactness of minimizing sequences is lowering the reduced functional’s minimal energy below \(mc^2\). Inspired by the nonrelativistic limit, we use the negative-energy ground state of the limit equation \eqref{laplace_e} and take $c$ large to achieve this. The concentration compactness principle then yields a local minimizer, which we further show is global.

% For the existence of energy ground state of \eqref{Dirac_e}, 
% unlike previous methods, our analysis starts from a relativistic perspective,
%  providing a new approach for studying normalized Dirac wave functions.
%   Compared to previous works \cite{CDGW24} and \cite{CN25}, 
%   our results cover all Sobolev subcritical cases, 
%   expanding the range of $p$ from $(2,8/3]$ ($L^2$-subcritical)
%    to $(2,3)$ (Sobolev subcritical).For the case where $p$ is
%     between $8/3$ and $3$, unlike \cite{CN25}, we cannot directly
%      reduce the energy functional to $E_c^+\cap\mathcal{S}$.
%       Instead, we have found a suitable open subset on $E_c^+\cap\mathcal{S}$ 
%       for reduction, and the reduced functional is bounded from below on this 
%       open subset.
% To obtain the compactness of the minimizing sequence of the reduced 
% functional on this open subset, the key step is to lower the minimal 
% energy of the reduced functional below $mc^2$. Unlike the approach in \cite{CN25},
% our method, inspired by the nonrelativistic limit, utilizes the energy ground state 
%  with negative energy of the limit equation \eqref{laplace_e}, and leverages the condition 
%  that $c$ is sufficiently large to lower the minimal energy of the reduced functional
%   below $mc^2$. We then use the concentration-compactness principle to establish the 
%   existence of a local minimizer. Furthermore, we have proven that this local minimizer
%    is a global minimizer. Whether this minimizer is unique remains an open question. 
  We establish an a priori relation between the ground-state energy of \eqref{Dirac_e} and that of \eqref{laplace_e} (Proposition \ref{mulp}). Because the energy functional for \eqref{Dirac_e} is not weakly lower semicontinuous (due to an attractive potential), standard nonrelativistic arguments (e.g. \cite{ES01}) do not apply. Using the relation from Proposition \ref{mulp}, we show that any energy ground state of \eqref{Dirac_e} gives a minimizing sequence for the functional \(\mathcal{I}^\infty\) on \(\mathcal{S}'\).  
  
   % In proving the existence of the energy ground state of \eqref{Dirac_e},
   %  we have established an a priori relationship between the ground state
   %   energy of \eqref{Dirac_e} and that of \eqref{laplace_e}, as detailed in Proposition \ref{mulp}. 
   %   Unlike the nonrelativistic limit of the ground state of Dirac-Fock 
   %   equation in \cite{ES01}, the energy functional corresponding to \eqref{Dirac_e}
   %   is not weakly lower semicontinuous due to the presence of an attractive 
   %   potential term. Utilizing the a priori relationship of the ground state 
   %   energy from Proposition \ref{mulp}, we demonstrate that the energy ground state
   %   of \eqref{Dirac_e} constitutes a minimizing sequence for functional $\mathcal{I}^\infty$ on $\mathcal{S}'$. 
   %   Combined with the concentration-compactness principle, we further prove
   %    the convergence of the energy ground state of \eqref{Dirac_e}.
To the author's knowledge, the nonrelativistic limit for action ground states of \eqref{Dirac} is new. Our strategy parallels the energy ground states. We first establish a priori relation between the ground-state energies of \eqref{Dirac} and \eqref{laplace}, then obtain convergence. Unlike the study of energy ground state, this requires a refined estimate of the distance between the (reduced) Nehari manifolds for \eqref{Dirac} and \eqref{laplace}.

% Regarding the nonrelativistic limit of the action ground state \eqref{Dirac}, 
% to the best of the author's knowledge, no relevant results
%  have been reported. Our approach is similar to the methodology used in 
%  proving the nonrelativistic limit of the  energy ground state of \eqref{Dirac_e}.
%   Specifically, we first establish an a priori relationship
%   between the ground state energy of \eqref{Dirac} and that of \eqref{laplace}, 
%   and then employ the concentration-compactness principle to derive the
%    convergence of the solutions. However, unlike the convergence of the
%     energy ground state of \eqref{Dirac_e}, this process requires a refined estimation
%      of the distance between the (generalized) Nehari manifolds corresponding to the \eqref{Dirac} and \eqref{laplace}.

 % {\color{red} why introduce a family of functionals. Previous methods need $L^2$ norm small, but (RKS) cannot ask that, so introduce a family of functional to overcome that problem.}

 % {\color{red} Dirac operator has no scaling invariant, so there is no Pohozaev manifold.}
 \medskip

\noindent{\bf Outline of the paper.} The paper is organized as follows. 
In Section \ref{section2}, we recall some basic facts and useful lemmas. 
In Section \ref{section3}, we establish the existence of energy ground states of \eqref{Dirac_e} and prove Theorem \ref{them:1.1}. 
In Section \ref{section4} and \ref{section5}, we prove the nonrelativistic limit of 
energy and action ground states of nonlinear Dirac equations.
In Section \ref{section6}, we discuss the equivalence of action and energy ground states.
\medskip

\noindent{\bf Notations.} Throughout this paper, we make use of the following notations.
\begin{itemize}
\item $  C  $ is some positive constant that may change from line to line;
 \item For any $R>0$, $B_R$ denotes the ball of radius $R$ centered at the origin;
    \item $\|\cdot\|_{L^q}$ denotes the usual norm of the space $L^q\left(\mathbb{R}^3, \mathbb{C}^M\right)$, $M\in \{2,4\}$;
    \item $\|\cdot\|_{H^s}$ denotes the usual norm of the space $H^s\left(\mathbb{R}^3, \mathbb{C}^M\right)$, $M\in \{2,4\}$;
\item %$(\cdot, \cdot)_{\mathbb{C}^4}$ 
$\langle \cdot,\cdot\rangle$
denotes the usual complex inner product in $\mathbb{C}^M$, $M\in \{2,4\}$;
\item  $a\lesssim  b$ means that $a \leq     C   b$;
\item $\alpha\cdot\nabla$ (or $\sigma\cdot\nabla$) means that $\sum\limits_{k=1}^3 \alpha_k\partial_k$ (or $\sum\limits_{k=1}^3 \sigma_k\partial_k$);
\item $\Re(\Im)$ stands for the real part (imaginary part) of a  complex valued function;
\item $o_n(1)$ (or $o_c(1)$)   denotes a quantity that tends to zero as $n\to \infty$ (or $c\to \infty$);
\item $P(u)$ denotes the $L^2$-normalization of $u$, defined as $P(u) = u / \|u\|_{L^2}$.
    \end{itemize}

\section{Preliminary Results}\label{section2}

In this section, we introduce our basic working space and variational setting. In addition, we will present some basic inequalities and lemmas required for the proofs of our main theorems. The following notations are used consistently throughout the entire paper: $\alpha\cdot\nabla=\sum\limits_{k=1}^3 \alpha_k\partial_k$, where $\partial_k=\frac{\partial}{\partial x_k}$,
and  $\alpha_1$, $\alpha_2$, $\alpha_3$, $\beta$ are $4\times 4$ Dirac matrices,
 \begin{align*}
\alpha_k=\begin{pmatrix}
0 &\sigma_k\\
\sigma_k &0
\end{pmatrix},\beta=\begin{pmatrix}
I_2 &0\\
0 &-I_2
\end{pmatrix},
\end{align*}
with
\begin{align*}
\sigma_1=\begin{pmatrix}
0 &1\\
1&0
\end{pmatrix}, \sigma_2=\begin{pmatrix}
0 &-i\\
i&0
\end{pmatrix}, \sigma_3=\begin{pmatrix}
1&0\\
0&-1
\end{pmatrix}.
\end{align*}
We denote $\mathcal{F}(u)$ or $\hat{u}$ the Fourier transform of $u$, which is defined by
$$
\hat{u}(\xi)=\frac{1}{(2 \pi)^{3/2}} \int_{\mathbb{R}^3} e^{-i \xi \cdot x} u(x) d x .
$$
For $u, v\in  H^{1/2}(\mathbb{R}^3,\mathbb{C}^4)$, the inner product in $H^{1/2}(\mathbb{R}^3,\mathbb{C}^4)$  is defined by
$$
(u, v)_{H^{1/2}}:=\Re \int_{\mathbb{R}^3} \sqrt{|\xi|^2+1}\langle\hat{u}(\xi), \hat{v}(\xi)\rangle d \xi.
 $$
For the free Dirac operator $
\mathscr{D}_c 
$, 
it is evident that it is self-adjoint on $L^2(\mathbb{R}^3,\mathbb{C}^4)$ with domain $\text{dom}(\mathscr{D}_c)\cong H^{1}(\mathbb{R}^3,\mathbb{C}^4)$ for any $c>0$, and we have
$$\sigma(\mathscr{D}_c-\omega)=(-\infty,-mc^2-\omega]\cup[mc^2-\omega,+\infty),$$  where $\sigma(\cdot)$ is the spectrum
of the linear operator.
Therefore, the Hilbert space $L^2(\mathbb{R}^3, \mathbb{C}^4)$ possesses the following orthogonal decomposition
$$
L^2(\mathbb{R}^3,\mathbb{C}^4)=L^{c, -}\oplus L^{c, +},
$$
where $\mathscr{D}_c$ is positive defined on $L^{c, +}$ and negative defined on $L^{c, -}$. %Let $|\mathscr{D}_c|$ denote the absolute value of $\mathscr{D}_c$ and $|\mathscr{D}_c|^{\frac{1}{2}}$ denote its square root.
Let $E_c$ be the completion of $\text{dom}(|\mathscr{D}_c|^{1/2})$ under the following inner product
$$
(u_1,u_2)_c:= \left(|\mathscr{D}_c|^{1/2}u_1, |\mathscr{D}_c|^{1/2}u_2\right)_{L^2},
$$
The induced norm is denoted by $\| u\|_c:=(u,u)_c^{1/2}$. 
Moreover, we have
$$
\|u\|_c^2 = \int_{\mathbb{R}^3}\sqrt{c^2|\xi|^2 + m^2c^4}|\hat{u}(\xi)|^2 d\xi.
$$
It is clear that $E_c\cong H^{1/2}(\mathbb{R}^3,\mathbb{C}^{4})$ for any $c>0$. In fact, for any $u\in E_c$, we have 
$$
mc^2\|u\|_{L^2}^2\leq \| u\|_c^2, \quad c\|(-\Delta)^{\frac{1}{4}}u\|_{L^2}^2\leq \|u\|_c^2
$$
and
\[\min\{mc^2,c\}\|u\|_{H^{1/2}}^2  \leq  \| u\|_c^2\leq \max\{mc^2,c\}\|u\|_{H^{1/2}}^2 .\]
It is clear that the linear space $E_c$ possesses the following decomposition
$$
E_c=E_c^{-} \oplus E_c^{+}, \quad \text { where } E_c^{\pm}:=E_c \cap L^{c, \pm}.
$$
Denote $P_c^\pm=\frac{1}{2}\left(I \pm \frac{\mathscr{D}_c}{\left|\mathscr{D}_c\right|}\right)$ the orthogonal projections on $E_c$ with kernel $E_c^\mp$. 
In the Fourier domain, for each $\xi \in \mathbb{R}^3$, the symbol matrix
$$
\hat{\mathscr{D}}_c(\xi):=   \mathcal{F} \mathscr{D}_c \mathcal{F}^{-1} =\left(\begin{array}{cc}
m c^2 I_2 & c \sigma \cdot \xi \\
c \sigma \cdot \xi & -m c^2 I_2
\end{array}\right)
$$
  is a Hermitian $4\times 4$-matrix with eigenvalues
$\pm \sqrt{m^2 c^4+c^2|\xi|^2}$.
It follows from a direct calculation that the unitary transformation $\mathbf{U}(\xi)$  diagonalizing $\hat{\mathscr{D}_c}(\xi)$
is given  by
$$
\mathbf{U}(\xi)=\frac{\left(m c^2+\lambda_c(\xi)\right) I_4+\beta c \alpha \cdot \xi}{\sqrt{2 \lambda_c\left(m c^2+\lambda_c(\xi)\right)}}=\Upsilon_{+} I_4+\Upsilon_{-} \beta \frac{\alpha \cdot \xi}{|\xi|},
$$
where $\lambda_c(\xi)=\sqrt{m^2 c^4+c^2|\xi|^2}, $ $\Upsilon_{\pm}=\sqrt{\frac{1}{2}\left(1 \pm m c^2 / \lambda_c(\xi)\right)}$.
Consequently, it is easy to see that
$$
\mathbf{U}^{-1}(\xi)=\frac{\left(m c^2+\lambda_c\right) I_4-\beta c \alpha \cdot \xi}{\sqrt{2 \lambda_c\left(m c^2+\lambda_c(\xi)\right)}}=\Upsilon_{+} I_4-\Upsilon_{-} \beta \frac{\alpha \cdot \xi}{|\xi|},
$$
$$
\mathbf{U}(\xi) \hat{\mathscr{D}_c}(\xi) \mathbf{U}^{-1}(\xi)=\lambda_c \beta.
$$
Therefore, we have 
\begin{align}\label{proj}
\widehat{P_c^{\pm}u}(\xi)
=\frac{1}{2}\mathbf{U}^{-1}(\xi)(I_4\pm \beta)\mathbf{U}(\xi)\hat{u}(\xi)=\frac{1}{2}\left(I_4\pm\frac{mc^2}{\lambda_c}\beta\pm\frac{c}{\lambda_c}\alpha\cdot \xi\right)\hat{u}(\xi).
\end{align}
Let us recall  the Foldy-Wouthuysen transformation is defined by $\mathbf{U}_{FW}:= \mathcal{F}^{-1}\mathbf{U}\mathcal{F}$,  which transforms
the free Dirac operator into the $2\times 2$-block form
$$
\mathbf{U}_{\mathrm{FW}} \mathscr{D}_c \mathbf{U}_{\mathrm{FW}}^{-1}=\left(\begin{array}{cc}
\sqrt{-c^2 \Delta+m^2 c^4}I_2 & 0 \\
0 & -\sqrt{-c^2 \Delta+m^2 c^4}I_2
\end{array}\right)=\beta\left|\mathscr{D}_c\right| .
$$

Since $P_c^\pm=\frac{1}{2}\left(I \pm \frac{\mathscr{D}_c}{\left|\mathscr{D}_c\right|}\right)$, then the following commutation relation between the Dirac operator and the projection operator is clear, which will be used extensively throughout the paper.
\begin{lemma}
    For $u\in  H^{1}(\mathbb{R}^3,\mathbb{C}^4)$, there
    holds $$\mathscr{D}_cP_c^\pm u = \pm |\mathscr{D}_c|P_c^\pm u. $$
\end{lemma}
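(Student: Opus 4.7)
The plan is to verify the identity by a direct computation from the definition $P_c^\pm=\tfrac{1}{2}(I\pm \mathscr{D}_c/|\mathscr{D}_c|)$, using only the spectral calculus for the self-adjoint operator $\mathscr{D}_c$. First I would note that since $\mathscr{D}_c$ is self-adjoint on $L^2(\mathbb{R}^3,\mathbb{C}^4)$ with domain $H^1(\mathbb{R}^3,\mathbb{C}^4)$, the bounded Borel functions $t\mapsto \tfrac{1}{2}(1\pm \operatorname{sgn}(t))$ yield bounded operators $P_c^\pm$ on $L^2$ that also preserve the domain $H^1$, because the symbols $\tfrac{1}{2}(I_4\pm mc^2\lambda_c^{-1}\beta\pm c\lambda_c^{-1}\alpha\cdot\xi)$ read off from \eqref{proj} are uniformly bounded in $\xi$. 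Hence, for $u\in H^1$, both $P_c^\pm u$ and $\mathscr{D}_c P_c^\pm u$ are well defined in $L^2$.

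Next, within the functional calculus of $\mathscr{D}_c$, the operators $\mathscr{D}_c$, $|\mathscr{D}_c|=\sqrt{\mathscr{D}_c^2}$, and $\mathscr{D}_c/|\mathscr{D}_c|=\operatorname{sgn}(\mathscr{D}_c)$ mutually commute on appropriate domains, and satisfy $\mathscr{D}_c^{\,2}=|\mathscr{D}_c|^2$ and $\mathscr{D}_c\cdot \operatorname{sgn}(\mathscr{D}_c)=|\mathscr{D}_c|$. Applying $\mathscr{D}_c$ to the definition gives
\[
\mathscr{D}_c P_c^{\pm}u=\tfrac{1}{2}\Big(\mathscr{D}_c\pm \tfrac{\mathscr{D}_c^{\,2}}{|\mathscr{D}_c|}\Big)u=\tfrac{1}{2}\bigl(\mathscr{D}_c\pm |\mathscr{D}_c|\bigr)u,
\]
while
\[
\pm|\mathscr{D}_c|P_c^{\pm}u=\pm\tfrac{1}{2}\Big(|\mathscr{D}_c|\pm \tfrac{|\mathscr{D}_c|\,\mathscr{D}_c}{|\mathscr{D}_c|}\Big)u=\tfrac{1}{2}\bigl(\pm|\mathscr{D}_c|+\mathscr{D}_c\bigr)u,
\]
and the two right-hand sides coincide.

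If one prefers a symbol-level verification, the same identity can be read off in the Fourier picture from \eqref{proj}: multiplying the projection symbol by $\hat{\mathscr{D}}_c(\xi)=mc^2\beta+c\alpha\cdot\xi$ and using $\beta^2=I_4$, $(\alpha\cdot\xi)^2=|\xi|^2 I_4$, and the anticommutation $\beta(\alpha\cdot\xi)+(\alpha\cdot\xi)\beta=0$ collapses the cross terms and produces $\tfrac{1}{2}\bigl(mc^2\beta+c\alpha\cdot\xi\bigr)\pm \tfrac{1}{2}\lambda_c(\xi)I_4$, which is exactly the symbol of $\pm|\mathscr{D}_c|P_c^\pm$. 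There is no real obstacle here; the only thing to be careful about is confirming that $P_c^\pm$ maps $H^1$ into $H^1$ so that the application of the unbounded operator $\mathscr{D}_c$ in the identity is legitimate, and this is immediate from the boundedness of the projection symbols in \eqref{proj}.
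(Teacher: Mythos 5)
Your argument is correct and is essentially the same computation the paper uses: expand $P_c^\pm=\tfrac12(I\pm\mathscr{D}_c/|\mathscr{D}_c|)$, apply $\mathscr{D}_c$, use $\mathscr{D}_c^2/|\mathscr{D}_c|=|\mathscr{D}_c|$ to get $\tfrac12(\mathscr{D}_c\pm|\mathscr{D}_c|)u$, and refactor as $\pm|\mathscr{D}_c|P_c^\pm u$. The extra remarks about domain preservation and the Fourier-symbol verification are fine but not needed beyond what the paper records.
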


It naturally arises to ask, what is the relationship between \( E_{c'}^\pm \) and \( E_c^\pm \) for different values of \( c' \) and $c $ ? For convenience, we set $c'=1$.
\begin{lemma}\label{scal}
    \( E_c^\pm \) and \( E_1^\pm \) are isometrically isomorphic under the following scaling transformation
    $$  T_c(u)(x)=  c^{-1/2}u(c^{-1}x),\quad \forall u\in E_c.$$
    \begin{proof}

For $u\in E_c$, we have
\begin{equation}
    \begin{split}
        \widehat{P_1^\pm T_c(u)}(\xi)&= 
        \frac{c^{-1/2}}{2}\left(I_4\pm\frac{m}{\lambda_1}\beta\pm\frac{1 }{\lambda_1}\alpha\cdot \xi\right)\widehat{T_c(u)}(\xi)\\
        &= \frac{c^{5/2}}{2}\left(I_4\pm\frac{mc^2}{\lambda_c(c\xi)}\beta\pm\frac{c }{\lambda_c(c\xi)}\alpha\cdot (c\xi)\right)\hat{u}(c\xi)\\
        &={c^{5/2}}\widehat{P_c^\pm u}(c\xi)
    \end{split}
\end{equation}
Hence, 
$$
(P_1^\pm T_c(u))(x) = c^{-1/2}(P_c^\pm u) (c^{-1}x)= T_c(P_c^{\pm}(u))(x),
$$
which implies $T_c(u)\in E_1^\pm$ if and only if $u\in E_c^\pm$. In addition, we have
\begin{equation}
    \begin{split}
        \|T_c(u)\|_1^2 &=\int_{\mathbb{R}^3}\sqrt{|\xi|^2+ m^2}|\widehat{T_c(u)}|^2(\xi) d \xi
        \\
&=\int_{\mathbb{R}^3}\sqrt{c^2|\xi|^2+ m^2c^4}|\widehat{u}|^2(\xi) d \xi\\
        &=\|u\|_c^2.
    \end{split}
\end{equation}
Therefore, we have that \( T_c \) is an isometric isomorphism from \( E_c^\pm \) to \( E_1^\pm \).    
    \end{proof}
\end{lemma}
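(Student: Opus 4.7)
The plan is to reduce everything to an explicit Fourier-side computation driven by a single scaling identity. First I would compute $\widehat{T_c(u)}(\xi)$ directly from the definition of the Fourier transform via the change of variables $y = c^{-1}x$, obtaining
\[
\widehat{T_c(u)}(\xi) \;=\; c^{5/2}\, \hat u(c\xi).
\]
This single identity is the engine behind both the norm equality and the intertwining with the projections.

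With this in hand, establishing the isometry $\|T_c(u)\|_1 = \|u\|_c$ is a direct computation: substitute the Fourier formula into $\|v\|_1^2 = \int_{\mathbb{R}^3} \sqrt{|\xi|^2+m^2}\,|\hat v(\xi)|^2\,d\xi$, change variables $\eta = c\xi$, and use the elementary scaling $\lambda_c(c\xi) = c^2 \lambda_1(\xi)$, where $\lambda_c(\xi) = \sqrt{c^2|\xi|^2 + m^2 c^4}$. The powers of $c$ combine cleanly to yield $\|T_c(u)\|_1^2 = \int_{\mathbb{R}^3} \lambda_c(\eta)\, |\hat u(\eta)|^2\,d\eta = \|u\|_c^2$.

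The remaining piece is the intertwining relation $P_1^\pm \circ T_c = T_c \circ P_c^\pm$, which will ensure that $T_c$ sends $E_c^\pm$ onto $E_1^\pm$. Applying the scaling identity $\lambda_c(c\xi) = c^2 \lambda_1(\xi)$ to the explicit multiplier in \eqref{proj}, one sees that the symbol of $P_1^\pm$ at $\xi$ coincides with the symbol of $P_c^\pm$ evaluated at $c\xi$. Pulling the factor $c^{5/2}$ through then gives
\[
\widehat{P_1^\pm T_c(u)}(\xi) \;=\; c^{5/2}\,\widehat{P_c^\pm u}(c\xi) \;=\; \widehat{T_c(P_c^\pm u)}(\xi).
\]
Since $T_c$ is invertible (its inverse is the analogous scaling $v \mapsto c^{1/2} v(c\,\cdot\,)$, which preserves the decomposition by the same symbol calculation), the desired isometric isomorphism between $E_c^\pm$ and $E_1^\pm$ follows. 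I do not foresee any real obstacle here — the lemma is essentially a bookkeeping exercise matching the dilation $x \mapsto c^{-1}x$ to the natural homogeneity of the Dirac symbol, and the only subtle point is tracking the powers of $c$ that appear through the Fourier transform and the change of variables.
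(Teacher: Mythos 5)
Your proposal is correct and follows essentially the same route as the paper's own proof: establish the Fourier scaling $\widehat{T_c(u)}(\xi)=c^{5/2}\hat u(c\xi)$, use the homogeneity $\lambda_c(c\xi)=c^2\lambda_1(\xi)$ to get the isometry by a change of variables, and match the symbols in \eqref{proj} to obtain the intertwining $P_1^\pm T_c = T_c P_c^\pm$.
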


In the standard model, a state $u\in E_c$ is a superposition of particles and its antiparticles, $u^+:=P_c^+u $ describes the state of Dirac fermions with positive energy, and $u^-:=P_c^-u$ describes its antiparticles with positive energy, which can cancel part of the energy of $u^+$. 
The projection can be extended to $L^q$ continuously as in the following Lemma; the proof can be found in the \cite{dongdingguo}. Inspired by Lemma \ref{scal}, we adopt a different approach from \cite{dongdingguo} to prove this lemma.

\begin{lemma}\label{com}
For each $q>1$, there holds $P_c^\pm\in \mathcal{L}(L^q; L^q)$, and 
$$
   \| P_c^\pm\|_{ \mathcal{L}(L^q; L^q)} = \| P_1^\pm\|_{ \mathcal{L}(L^q; L^q)}.
   $$
\end{lemma}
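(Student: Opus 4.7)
The plan is to use the scaling isomorphism $T_c$ from Lemma \ref{scal} to transfer the question to the base case $c=1$, and then invoke Mikhlin's multiplier theorem there. The key is that the identity $P_1^\pm\circ T_c = T_c\circ P_c^\pm$, already extracted on the Fourier side in the proof of Lemma \ref{scal}, combined with the precise way $T_c$ rescales $L^q$-norms, gives the equality of operator norms for free.

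\medskip

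\textbf{Step 1 (boundedness of $P_1^\pm$ on $L^q$).} By formula \eqref{proj} with $c=1$, $P_1^\pm$ is a $4\times 4$ matrix-valued Fourier multiplier with symbol $m^\pm(\xi)=\frac{1}{2}\bigl(I_4\pm\frac{m}{\lambda_1(\xi)}\beta\pm\frac{1}{\lambda_1(\xi)}\alpha\cdot\xi\bigr)$, where $\lambda_1(\xi)=\sqrt{m^2+|\xi|^2}$. Each entry of $m^\pm$ is smooth on $\mathbb{R}^3$, and a direct computation shows $|\partial^\gamma m^\pm(\xi)|\leq C_\gamma|\xi|^{-|\gamma|}$ for every multi-index $\gamma$ and all $\xi\neq 0$. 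Hence the (matrix-valued) Mikhlin–H\"ormander multiplier theorem yields $P_1^\pm\in\mathcal{L}(L^q;L^q)$ for every $q\in(1,\infty)$.

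\medskip

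\textbf{Step 2 ($L^q$-scaling of $T_c$).} A change of variable $y=c^{-1}x$ gives
\[\|T_c u\|_{L^q}^q=\int_{\mathbb{R}^3}c^{-q/2}|u(c^{-1}x)|^q\,dx=c^{3-q/2}\|u\|_{L^q}^q,\]
so $T_c$ is a bijection of $L^q$ with $\|T_c\|_{\mathcal{L}(L^q;L^q)}=c^{3/q-1/2}$ and $\|T_{c^{-1}}\|_{\mathcal{L}(L^q;L^q)}=c^{1/2-3/q}$, these two factors being reciprocal.

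\medskip

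\textbf{Step 3 (transfer).} In the proof of Lemma \ref{scal} it was shown on the Fourier side that $\widehat{P_1^\pm T_c(u)}(\xi)=c^{5/2}\widehat{P_c^\pm u}(c\xi)=\widehat{T_c(P_c^\pm u)}(\xi)$, i.e. $P_1^\pm\circ T_c=T_c\circ P_c^\pm$ on the Schwartz class $\mathcal{S}(\mathbb{R}^3,\mathbb{C}^4)$. Since $\mathcal{S}$ is dense in $L^q$, and since $T_{c^{-1}}\circ P_1^\pm\circ T_c$ is a bounded operator on $L^q$ by Steps 1 and 2, this composition provides the (unique) continuous $L^q$-extension of $P_c^\pm$. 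Consequently, for every $u\in L^q$,
\[\|P_c^\pm u\|_{L^q}=\|T_{c^{-1}}P_1^\pm T_c u\|_{L^q}\leq c^{1/2-3/q}\|P_1^\pm\|_{\mathcal{L}(L^q;L^q)}c^{3/q-1/2}\|u\|_{L^q}=\|P_1^\pm\|_{\mathcal{L}(L^q;L^q)}\|u\|_{L^q}.\]
The symmetric identity $P_1^\pm=T_c\circ P_c^\pm\circ T_{c^{-1}}$ gives the reverse inequality, so the two operator norms coincide.

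\medskip

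The main obstacle is Step 1: one must verify the Mikhlin condition for the matrix-valued symbol $m^\pm$, which requires a careful but routine differentiation of $m/\lambda_1(\xi)$ and $\xi_j/\lambda_1(\xi)$. Everything else is a clean scaling argument, and the point of the proof is precisely that the $c$-powers introduced by $T_c$ and $T_{c^{-1}}$ cancel exactly in the $L^q$ estimate, which is the advantage of this approach over the direct multiplier argument applied separately at each $c$.
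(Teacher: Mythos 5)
Your proof is correct and follows essentially the same scaling-transfer argument as the paper: using $P_1^\pm\circ T_c = T_c\circ P_c^\pm$ together with the exact $L^q$-scaling $\|T_c u\|_{L^q}=c^{3/q-1/2}\|u\|_{L^q}$ so that the powers of $c$ cancel. The only difference is that you supply a Mikhlin--H\"ormander justification for the boundedness of $P_1^\pm$ on $L^q$, whereas the paper simply cites \cite{dongdingguo} for that base case.
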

\begin{proof}
    A straightforward computation reveals that for $u\in E_c\cap L^q$, there holds
    $$
    \|T_c(u)\|_{L^q} = c^{\frac{6-q}{2q}} \|u\|_{L^q}.
    $$
    Then
    \begin{equation}
        \begin{split}
            \|P_c^\pm u\|_{L^q}&=c^{\frac{q-6}{2q}}\|T_cP_c^\pm u\|_{L^q}=
            c^{\frac{q-6}{2q}}\|P_1^\pm T_cu\|_{L^q}\\
            &\leq \| P_1^\pm\|_{ \mathcal{L}(L^q; L^q)}  c^{\frac{q-6}{2q}}
            \| T_cu\|_{L^q}\\
            &= \| P_1^\pm\|_{ \mathcal{L}(L^q; L^q)}\|u\|_{L^q},
        \end{split}
    \end{equation}
    which yields $\| P_c^\pm\|_{ \mathcal{L}(L^q; L^q)} \leq \| P_1^\pm\|_{ \mathcal{L}(L^q; L^q)}$.
    Similarly, we have $\| P_c^\pm\|_{ \mathcal{L}(L^q; L^q)} \geq \| P_1^\pm\|_{ \mathcal{L}(L^q; L^q)}$.
\end{proof}
In \cite[Throrem 1.3]{MR4091059}, the authors prove the following  $L^p$-estimates for  pseudo-relativistic operator $|\mathscr{D}_c|- mc^2 + 1$ by the Mikhlin-Hörmander multiplier theorem:  
 \begin{equation}\label{new_lp0}
  \| (|\mathscr{D}_c|- mc^2 + 1)^{-1} u\|_{W^{1,p}}\leq  C   \|u\|_{L^p}.
 \end{equation}
With minor modifications, we can obtain the $L^p$-estimates for the Dirac operator, 
which are useful for studying the nonrelativistic limit.
\begin{lemma}[$L^p$-estimates for Dirac operator  ]\label{new_lpp}
  There exists $c_0>0$, such that for $c>c_0,\,1<p<\infty$, there holds
  \begin{equation}\label{new_lp1}
    \| (\mathscr{D}_c- mc^2 + 1)^{-1} u\|_{W^{1,p}}\leq  C   \|u\|_{L^p},
  \end{equation}
 \begin{equation}\label{new_lp2}
  \|P_c^- (\mathscr{D}_c- mc^2 + 1)^{-1} u\|_{W^{1,p}}\leq \frac{ C  }{c}\|u\|_{L^p},
 \end{equation}
  and 
  \begin{equation}\label{new_lp3}
    \|P_c^- (\mathscr{D}_c- mc^2 + 1)^{-1} u\|_{L^p}\leq \frac{ C  }{c^2}\|u\|_{L^p}.
  \end{equation}
\begin{proof}
  The operators \({\sqrt{-\Delta+1}}\left(|\mathscr{D}_c|+ mc^2 - 1\right)^{-1}\) and
   \(\left({|\mathscr{D}_c|+ mc^2 - 1}\right)^{-1}\) are associated with the symbols  
\[
f(\xi)=\frac{\sqrt{|\xi|^2+1}}{\sqrt{c^2|\xi|^2+ m^2c^4}+mc^2-1}, \quad g(\xi)=\frac{1}{\sqrt{c^2|\xi|^2+ m^2c^4}+mc^2-1},
\]
respectively. A direct computation shows that for all multi-indices \(\alpha\) with \(0 \leq |\alpha| \leq 2\), the following estimates hold for all \(\xi \in \mathbb{R}^{n} \setminus \{0\}\):
\[
|\nabla^{\alpha}f(\xi)| \leq \frac{C}{c|\xi|^{|\alpha|}}, \quad |\nabla^{\alpha}g(\xi)| \leq \frac{C}{c^2|\xi|^{|\alpha|}},
\]
where \(C\) is a constant independent of \(\xi\) and \(c\). By Lemma \ref{com} and
Mikhlin–Hörmander multiplier theorem (see Theorem 5.2.7 in \cite{MR3243734}),
there holds
\begin{equation*}
  \begin{split}
    \|P_c^- (\mathscr{D}_c- mc^2 + 1)^{-1} u\|_{W^{1,p}}
    &=\| (|\mathscr{D}_c|+ mc^2 - 1)^{-1} u^-\|_{W^{1,p}}\\
    &\leq \frac{ C  }{c}\|u^-\|_{L^p}\\
    & \leq \frac{ C  }{c}\|u\|_{L^p},
  \end{split}
\end{equation*}
and 
\begin{equation*}
  \begin{split}
    \|P_c^- (\mathscr{D}_c- mc^2 + 1)^{-1} u\|_{L^p}
    &=\| (|\mathscr{D}_c|+ mc^2 - 1)^{-1} u^-\|_{L^p}\\
    &\leq \frac{ C  }{c^2}\|u^-\|_{L^p}\\
    & \leq \frac{ C  }{c^2}\|u\|_{L^p},
  \end{split}
\end{equation*}
Combining \eqref{new_lp0} with \eqref{new_lp2}, we obtain 
\begin{equation*}
  \begin{split}
    \| (\mathscr{D}_c- mc^2 + 1)^{-1} u\|_{W^{1,p}}&\leq \| P_c^+(\mathscr{D}_c- mc^2 + 1)^{-1} u\|_{W^{1,p}}+ \| P_c^-(\mathscr{D}_c- mc^2 + 1)^{-1} u\|_{W^{1,p}}\\
    & = \| (|\mathscr{D}_c|- mc^2 + 1)^{-1} u^+\|_{W^{1,p}} + \| P_c^-(\mathscr{D}_c- mc^2 + 1)^{-1} u\|_{W^{1,p}}\\
    &\leq  C   \|u\|_{L^p}.
  \end{split}
\end{equation*}
This ends the proof.
\end{proof}
\end{lemma}
We recall that the projection operators onto the first two components and the last two components are given by 
    $$
    P_\infty^+= \frac{1}{2}(I_4 + \beta)=\operatorname*{diag}\{ 1,1,0,0\}, \quad P_\infty^- =\frac{1}{2}(I_4 - \beta) =I_4 - P_\infty^+.
    $$
The following lemma implies that, for a given function $f\in H^s(\mathbb{R}^3, \mathbb{C}^4)$, its positive (negative) part will converge to its first (last) two components in a lower Sobolev norm  as the speed of light converges to infinity.  It is worth mentioning that $H^2$ boundedness cannot imply $H^2$ convergence, but $H^{2+\varepsilon}$ can.

\begin{lemma}\label{nar}
For each $s\geq 0$, $\varepsilon\geqslant0$, there holds 
$$
      \|P_c^\pm - P_\infty^\pm\|_{\mathcal{L}(H^{s+\varepsilon}; H^{s})} \lesssim \frac{1}{c^{\min\{1, \varepsilon\}}}.
      $$
\end{lemma}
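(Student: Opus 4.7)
The plan is to reduce the lemma to a pointwise bound on the Fourier multiplier symbol of $P_c^\pm - P_\infty^\pm$ and then apply Plancherel. By the explicit formula \eqref{proj} and the identity $P_\infty^\pm = \tfrac{1}{2}(I_4 \pm \beta)$, the operator $P_c^\pm - P_\infty^\pm$ acts on the Fourier side as multiplication by the matrix symbol
\[
  m_c^\pm(\xi) \;=\; \tfrac{1}{2}\!\left[\pm A(\xi)\,\beta \;\pm\; B(\xi)\,\tfrac{\alpha\cdot\xi}{|\xi|}\right],
\]
where $A(\xi) := (mc^2-\lambda_c(\xi))/\lambda_c(\xi)$, $B(\xi) := c|\xi|/\lambda_c(\xi)$, and $\lambda_c(\xi) = \sqrt{m^2c^4 + c^2|\xi|^2}$. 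Plancherel then reduces the lemma to the pointwise inequality $\|m_c^\pm(\xi)\|_{\mathcal{L}(\mathbb{C}^4)}^2 \lesssim (1+|\xi|^2)^{\varepsilon}/c^{2\min\{1,\varepsilon\}}$.

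The first key step is the norm computation for the matrix $m_c^\pm(\xi)$. Using the Clifford relations $\beta^2 = I_4$, $(\alpha\cdot\xi/|\xi|)^2 = I_4$, and $\{\beta,\alpha_k\}=0$, one has $(\pm A\beta \pm B\,\alpha\cdot\xi/|\xi|)^2 = (A^2+B^2)\,I_4$, which immediately yields $\|m_c^\pm(\xi)\|_{\mathcal{L}(\mathbb{C}^4)}^2 = (A^2+B^2)/4$. Expanding and using $\lambda_c^2 = m^2c^4 + c^2|\xi|^2$, a short algebraic manipulation gives
\[
  \|m_c^\pm(\xi)\|_{\mathcal{L}(\mathbb{C}^4)}^2 \;=\; \frac{\lambda_c(\xi) - mc^2}{2\lambda_c(\xi)} \;=\; \frac{1}{2}\!\left(1 - \frac{1}{\sqrt{1 + |\xi|^2/(m^2c^2)}}\right).
\]
Since $1 - (1+x)^{-1/2} \le \min(1, x/2)$ for $x\ge 0$ (from $\sqrt{1+x}\le 1+x/2$), the symbol enjoys the clean bound $\|m_c^\pm(\xi)\|_{\mathcal{L}(\mathbb{C}^4)}^2 \lesssim \min\bigl(1,\, |\xi|^2/c^2\bigr)$.

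The last step is an elementary interpolation in $|\xi|$. Assuming $c\ge 1$, I claim
\[
  \min\!\bigl(1,\, |\xi|^2/c^2\bigr) \;\le\; \frac{(1+|\xi|^2)^{\varepsilon}}{c^{2\min\{1,\varepsilon\}}}.
\]
For $0\le\varepsilon\le 1$ this is $\min(1,t)\le t^\varepsilon$ applied with $t=|\xi|^2/c^2$. For $\varepsilon>1$, split on $|\xi|$: if $|\xi|\le c$ then the LHS is $|\xi|^2/c^2 \le (1+|\xi|^2)^\varepsilon/c^2$; if $|\xi|>c$ then the LHS equals $1$, while $(1+|\xi|^2)^{\varepsilon}/c^2 \ge |\xi|^{2\varepsilon}/c^2 > c^{2\varepsilon-2}\ge 1$. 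Multiplying by $(1+|\xi|^2)^s|\hat f(\xi)|^2$ and integrating yields
\[
  \|(P_c^\pm - P_\infty^\pm)f\|_{H^s}^2 \;\lesssim\; \frac{1}{c^{2\min\{1,\varepsilon\}}} \int_{\R^3} (1+|\xi|^2)^{s+\varepsilon}\,|\hat f(\xi)|^2\,d\xi \;=\; \frac{\|f\|_{H^{s+\varepsilon}}^2}{c^{2\min\{1,\varepsilon\}}},
\]
and taking square roots gives the lemma. The mildly nontrivial point is the Clifford identity $(\pm A\beta \pm B\,\alpha\cdot\xi/|\xi|)^2 = (A^2+B^2)I_4$, which collapses the matrix operator norm to a scalar quantity and makes every subsequent estimate transparent; the interpolation step and the Plancherel application are then routine.
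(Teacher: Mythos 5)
Your proof is correct, and it takes a genuinely different route from the paper's. The paper proves the estimate by splitting $u$ into its upper and lower spinor components, writing $P_c^+ u - P_\infty^+ u = (P_c^+ - I_4)P_\infty^+ u + P_c^+ P_\infty^- u$, and bounding each piece separately using the symbol formula and the elementary inequality $a^2+b^2 \geq C_\varepsilon |a|^\varepsilon |b|^{2-\varepsilon}$ to control $|1-mc^2/\lambda_c|$ and $c/\lambda_c$ individually; it restricts the explicit argument to $0\leqslant \varepsilon\leqslant 1$. You instead compute the exact operator norm of the full symbol matrix in one stroke: since $m_c^\pm(\xi) = \tfrac12(\pm A\beta \pm B\,\alpha\cdot\xi/|\xi|)$ is Hermitian and the Clifford relations give $m_c^\pm(\xi)^2 = \tfrac14(A^2+B^2)I_4$, the matrix norm collapses to the scalar $\|m_c^\pm(\xi)\|^2 = (\lambda_c-mc^2)/(2\lambda_c)$. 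The algebraic cancellation in $A^2+B^2 = 2(\lambda_c-mc^2)/\lambda_c$ is the payoff: you get a single clean quantity to interpolate, rather than two bounds that must be separately optimized, and you obtain the sharp constant $\min(1,|\xi|^2/c^2)$ for the symbol rather than two different exponent choices. You also make the $\varepsilon > 1$ case explicit, which the paper leaves implicit. Both proofs are routine Plancherel arguments at bottom, but yours isolates the structural reason the estimate holds (the symbol difference is a Clifford element with scalar square) and is arguably the cleaner presentation.
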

\begin{proof}
     For $f,g\in H^{s+\varepsilon}(\mathbb{R}^3, \mathbb{C}^2)$,
     $u=(f, g)^T\in H^{s+\varepsilon}(\mathbb{R}^3, \mathbb{C}^4)$, we have
    \begin{equation}
            \left\|P_c^+ u - P_\infty^+u \right\|_{H^s}^2\lesssim
            \left\|(P_c^+ - I_4)P_\infty^+ u\right\|_{H^s}^2
            +
            \left\|P_c^+ P_\infty^- u\right\|_{H^s}^2,
    \end{equation}
    and in view of \eqref{proj}, we have
    \begin{equation}
        \begin{split}
             \left\|(P_c^+ - I_4) P_\infty^+ u\right\|_{H^s}^2
             =&\frac{1}{4}\left\| (1+|\xi|^2)^{\frac{s}{2}}\left(1-\frac{mc^{2}}{\lambda_c(\xi)}\right)\hat{f}(\xi)\right\|_{L^{2}}^{2}\\
             &+\frac{1}{4}\left\|(1+|\xi|^2)^{\frac{s}{2}}\frac{c}{\lambda_c(\xi)}\xi\cdot\sigma\hat{f}(\xi)\right\|_{L^{2}}^{2}.
        \end{split}
    \end{equation}
  For $0\leq \varepsilon\leq 1$,  From the elementary inequality $a^2 + b^2\geqslant
    C_\varepsilon|a|^\varepsilon|b|^{2-\varepsilon}$, we are led to
$$ |1-\frac{mc^2}{\lambda_c(\xi)}|\leq \frac{c^2|\xi|^2}{c^2|\xi|^2+ m^2c^4}\leq   C  _{m, \varepsilon} \frac{|\xi|^\varepsilon}{c^\varepsilon},\quad\frac{c}{\lambda_c(\xi)}\leq   C  _{m,\varepsilon} \frac{1}{{c^\varepsilon|\xi|^{1-\varepsilon}}},$$ then
\begin{equation}
    \begin{split}
         \left\|(P_c^+ - I_4) P_\infty^+ u\right\|_{H^s}^2\lesssim
         \frac{1}{c^{2\varepsilon}}\| (1+|\xi|^2)^{\frac{s}{2}+\frac{\varepsilon}{2}} \hat{f}(\xi\|_{L^2}^2\lesssim
         \frac{1}{c^{2\varepsilon}} \|f\|_{H^{s+\varepsilon}}^2.
    \end{split}
\end{equation}
Similarly, 
$$
\left\|P_c^+ P_\infty^- u\right\|_{H^s}^2\lesssim\frac{1}{c^{2\varepsilon}} \|g\|_{H^{s+\varepsilon}}^2,
$$
which yields the conclusion.
    \end{proof}
 \begin{remark}
Lemma \ref{nar} indicates that if \( u \in H^2 \) satisfies \( P_{\infty}^{-}u = 0 \), then
\begin{equation}\label{rem_stb}
    \|u\|_{H^1}^2 = \|u^+\|_{H^1}^2 + \|u^-\|_{H^1}^2 = \|u^+\|_{H^1}^2 + \mathcal{O}\left(\frac{1}{c^2}\right).
\end{equation}
 \end{remark}
%
%
%参考文献
We recall the following Sobolev inequality involving fractional derivatives; for further details, we refer the reader to \cite{MR1947703}.

\begin{lemma}[\textbf{Sobolev Inequality}]\label{SB}
Let $2 \leq p < \infty$. Then there exists a constant $C > 0$ such that for
$u\in H^{\frac{3}{2}-\frac{3}{p}}$, 
\[
\|u\|_{L^p} \leq C \left\|(-\Delta)^{\frac{3}{4} - \frac{3}{2p}} u \right\|_{L^2}.
\]
\end{lemma}

Using Lemma \ref{SB} and interpolation inequality, we obtain the following Gagliardo--Nirenberg type inequality.

\begin{lemma}[\textbf{Gagliardo--Nirenberg Inequality}]\label{GN}
Let $2 \leq p < \infty$. Then there exists a constant $C > 0$ such that
\[
\|u\|_{L^p} \leq C \left\|(-\Delta)^{\frac{m}{2}} u \right\|_{L^2}^\theta \|u\|_{L^2}^{1-\theta}
\]
holds for $m \in \mathbb{R}^+$ and $\theta \in [0,1]$ satisfying
\[
\frac{1}{p} = \frac{1}{2} - \frac{m\theta}{3}.
\]
\end{lemma}

For $s= \frac{3p-7}{6p-16}\in \left(1,\frac{3p-6}{6p-16}\right)$, we introduce the following notations to overcome the difficulties from the $L^2$-supercritical nonlinearity.
$$\mathscr{O}_c := \{
u\in E_c: \|u\|_{L^2}\leq 1, \|u\|_c < c^s 
\}, 
\mathcal{O}_c := \{
u\in \mathscr{O}_c: \|u\|_{L^2}=1 
\}, 
\mathcal{O}_c^+:= \mathcal{O}_c \cap E_c^+.
$$

\begin{lemma}\label{loc_ineq}
    For $u, v\in E_c$, $ \|u\|_c < c^s$, then
    $$
    \int_{\mathbb{R}^3}|u|^{p-2}|v|^2\leq C c^{-{1}/{2}}\|u\|_{L^2}^{6-2p} \|v\|_c^2.
    $$
\end{lemma}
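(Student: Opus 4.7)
The proof is a Hölder--Sobolev--Gagliardo--Nirenberg chain in which the defining value $s=(3p-7)/(6p-16)$ is reverse-engineered to make the total power of $c$ collapse to $c^{-1/2}$. The plan is to separate $u$ and $v$ via Hölder, bound each factor by an $H^{1/2}$-norm, pay the price of transferring an $H^{1/2}$-norm into a $\|\cdot\|_c$-norm (which costs a factor of $c^{-1/2}$), and finally substitute $\|u\|_c<c^s$.

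First I would split the integral by Hölder with conjugate exponents $(3,3/2)$:
\[
\int_{\mathbb{R}^3}|u|^{p-2}|v|^2\,dx\leq \|u\|_{L^{3(p-2)}}^{p-2}\,\|v\|_{L^3}^2.
\]
For $p\in(8/3,3)$, both $3(p-2)\in(2,3)$ and $3$ are subcritical for $H^{1/2}(\mathbb{R}^3)$. For the $v$-factor I would use the Sobolev embedding $H^{1/2}\hookrightarrow L^3$ together with the norm comparison $\|\cdot\|_{H^{1/2}}^2\leq c^{-1}\|\cdot\|_c^2$ recorded in the preliminaries (which holds once $c\geq 1/m$, so $\min\{mc^2,c\}=c$), giving $\|v\|_{L^3}^2\lesssim c^{-1}\|v\|_c^2$. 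For the $u$-factor I would apply Lemma~\ref{GN} with $r=q=2$, $m=1/2$, $n=3$, target $L^{3(p-2)}$; a direct computation of the interpolation identity gives $\theta=(3p-8)/(p-2)$, which lies in $[0,1]$ exactly for $p\in[8/3,3]$. Raising to the $(p-2)$th power and using $\|u\|_{H^{1/2}}\leq c^{-1/2}\|u\|_c$ together with the hypothesis $\|u\|_c<c^s$,
\[
\|u\|_{L^{3(p-2)}}^{p-2}\leq C\|u\|_{H^{1/2}}^{3p-8}\|u\|_{L^2}^{6-2p}\leq Cc^{(3p-8)(s-1/2)}\|u\|_{L^2}^{6-2p}.
\]

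Combining the two bounds, the total $c$-exponent is $(3p-8)(s-\tfrac12)-1$. The algebraic identity that drives the whole lemma is
\[
s-\tfrac12=\frac{3p-7}{6p-16}-\frac12=\frac{1}{2(3p-8)},
\]
so $(3p-8)(s-\tfrac12)=\tfrac12$ and the exponent collapses to $-\tfrac12$, giving exactly the claimed $c^{-1/2}$. There is no real obstacle: the content of the lemma is precisely the verification that the specific choice of $s$ (and hence of the admissible set $\mathscr{O}_c$) has been tuned so that the $L^2$-supercritical term $|u|^{p-2}|v|^2$ is controllable by $c^{-1/2}\|v\|_c^2$. The only point worth double-checking is that the Gagliardo--Nirenberg exponent $\theta=(3p-8)/(p-2)$ lies in $[0,1]$ for the full range $p\in(8/3,3)$, which it does with equality at the endpoints, matching exactly the $L^2$-supercritical regime in which the lemma is needed.
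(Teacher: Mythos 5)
Your proof is correct and is essentially the paper's own argument: the same Hölder split into $\|u\|_{L^{3p-6}}^{p-2}\|v\|_{L^3}^2$, the same Gagliardo--Nirenberg interpolation on the $u$-factor and embedding on the $v$-factor, and the same algebra identifying $s$ so that the net exponent of $c$ collapses to $-1/2$. The only cosmetic difference is that the paper tracks $\|(-\Delta)^{1/4}\cdot\|_{L^2}$ directly rather than passing through the slightly larger $H^{1/2}$-norm, which changes nothing.
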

\begin{proof}
    By H\"older and Gagliardo-Nirenberg inequalities, we have
    \begin{equation*}
        \begin{split}
            \int_{\mathbb{R}^3}|u|^{p-2}|v|^2 dx
                  &\leq \|u\|_{L^{3p-6 }}^{p-2}\|v\|_{L^3}^2\\
                  &\leq C
                  \|(-\Delta)^{\frac14}u\|_{L^2}^{3p-8} \|u\|_{L^2}^{6-2p}    \|(-\Delta)^{\frac14}v\|_{L^2}^2\\
                  &\leq C c^{-(3p-6)/2}
                  \|u\|_c^{3p-8}\cdot  \|u\|_{L^2}^{6-2p}   \cdot \|v\|_c^2\\
                  & \leq C c^{-1/2}\|u\|_{L^2}^{6-2p} \|v\|_c^2.
        \end{split}
    \end{equation*}
\end{proof}

The following modified Gagliardo-Nirenberg inequality
was essentially proved in \cite[Proposition 2]{MR4430585}, see also previous work by Bellazzini, Georgiev and Visciglia \cite{Bella18}.
\begin{lemma}\label{modified}
  For $p\in (8/3, 3)$, there exists a constant $C>0$, such that
  $$
  \| u\|_{L^p}^p \leq C \left( \left\|u\right\|_{L^2}^{\frac{6-p}{2}}
  (\|u\|_c^2- mc^2\|u\|_{L^2}^2)^{\frac{3p-6}{4}}
   + c^{-\frac{3p-6}{2}}\|u\|_{L^2}^{6-2p} (\|u\|_c^2- mc^2\|u\|_{L^2}^2)^{\frac{3p-6}{2}}\right).
  $$
  In addition, for $u\in \mathcal{O}_c$, there holds
  $$
  \| u\|_{L^p}^p \leq C \left( (\|u\|_c^2- mc^2\|u\|_{L^2}^2)^{\frac{3p-6}{4}} + c^{-1/2}(\|u\|_c^2- mc^2\|u\|_{L^2}^2)\right).
  $$
\end{lemma}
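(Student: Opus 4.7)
The plan is to exploit the fact that the relativistic symbol $\Lambda_c(\xi):=\sqrt{c^2|\xi|^2+m^2c^4}-mc^2=c^2|\xi|^2/(\sqrt{c^2|\xi|^2+m^2c^4}+mc^2)$ behaves like the nonrelativistic kinetic symbol $|\xi|^2/(2m)$ at frequencies $|\xi|\ll mc$ and like the ultra-relativistic symbol $c|\xi|$ at frequencies $|\xi|\gg mc$. I will split $u$ in the Fourier domain at the threshold $|\xi|\sim mc$ via a smooth Littlewood--Paley decomposition $u=u_\flat+u_\sharp$, and treat the two pieces with Lemma~\ref{GN} using different amounts of regularity. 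An elementary computation gives the one-sided bounds $\Lambda_c(\xi)\gtrsim_m|\xi|^2$ on $\{|\xi|\leq 4mc\}$ and $\Lambda_c(\xi)\gtrsim c|\xi|$ on $\{|\xi|\geq mc\}$; writing $\mathcal{K}:=\|u\|_c^2-mc^2\|u\|_{L^2}^2=\int\Lambda_c(\xi)|\hat u(\xi)|^2\,d\xi$, Plancherel then yields
\begin{equation*}
\|\nabla u_\flat\|_{L^2}^2\lesssim_m\mathcal{K},\qquad\|(-\Delta)^{1/4}u_\sharp\|_{L^2}^2\lesssim c^{-1}\mathcal{K},
\end{equation*}
while $\|u_\flat\|_{L^2},\|u_\sharp\|_{L^2}\leq\|u\|_{L^2}$ holds automatically for the smooth cutoffs.

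Next I apply Lemma~\ref{GN} with $m=1$ to $u_\flat$ and with $m=1/2$ to $u_\sharp$; the interpolation exponents $\theta$ compute to $3(p-2)/(2p)$ and $3-6/p$ respectively. Raising the resulting bounds to the $p$th power and inserting the frequency estimates produces
\begin{align*}
\|u_\flat\|_{L^p}^p&\lesssim\|\nabla u_\flat\|_{L^2}^{3(p-2)/2}\|u_\flat\|_{L^2}^{(6-p)/2}\lesssim\mathcal{K}^{(3p-6)/4}\|u\|_{L^2}^{(6-p)/2},\\
\|u_\sharp\|_{L^p}^p&\lesssim\|(-\Delta)^{1/4}u_\sharp\|_{L^2}^{3p-6}\|u_\sharp\|_{L^2}^{6-2p}\lesssim c^{-(3p-6)/2}\mathcal{K}^{(3p-6)/2}\|u\|_{L^2}^{6-2p}.
\end{align*}
Combined with the elementary bound $\|u\|_{L^p}^p\leq 2^{p-1}(\|u_\flat\|_{L^p}^p+\|u_\sharp\|_{L^p}^p)$, this is the first displayed inequality of the lemma.

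For the refined form on $\mathcal{O}_c$ I specialize to $\|u\|_{L^2}=1$, which collapses the mass factors to one; it then remains only to absorb the high-frequency term into $c^{-1/2}\mathcal{K}$. This reduces to the algebraic comparison $\mathcal{K}^{(3p-8)/2}\leq c^{(3p-7)/2}$. Since $u\in\mathcal{O}_c\subset\mathscr{O}_c$ satisfies $\mathcal{K}\leq\|u\|_c^2<c^{2s}$, the inequality follows from the identity $s(3p-8)=(3p-7)/2$, which is precisely how the exponent $s=(3p-7)/(6p-16)$ was chosen just before the lemma statement. There is no deep obstacle: the only real care is to pick the threshold $\sim mc$ so that the symbol comparison constants are uniform in $c$ and to track the $c$-weights carefully under our normalisation $\|u\|_c^2=\int\sqrt{c^2|\xi|^2+m^2c^4}|\hat u|^2\,d\xi$. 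The argument is essentially the adaptation of \cite[Proposition~2]{MR4430585} to our conventions; no analytic input beyond Lemma~\ref{GN} and the basic algebra of $\Lambda_c$ is needed.
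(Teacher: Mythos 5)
The paper does not prove this lemma; it is stated with a pointer to Proposition 2 of \cite{MR4430585} and to \cite{Bella18}, so there is no in-paper argument to compare against. Your proposal supplies a complete, self-contained proof, and it is correct. The frequency split at $|\xi|\sim mc$ with a smooth cutoff pair $\chi_\flat+\chi_\sharp=1$ (so $\|u_\flat\|_{L^2},\|u_\sharp\|_{L^2}\leq\|u\|_{L^2}$) works as you say: the shifted symbol $\Lambda_c(\xi)=\sqrt{c^2|\xi|^2+m^2c^4}-mc^2=c^2|\xi|^2/(\sqrt{c^2|\xi|^2+m^2c^4}+mc^2)$ satisfies $\Lambda_c(\xi)\geq\frac{1}{(1+\sqrt{17})m}|\xi|^2$ on $\{|\xi|\leq 4mc\}$ and $\Lambda_c(\xi)\geq\frac{1}{1+\sqrt{2}}c|\xi|$ on $\{|\xi|\geq mc\}$, with constants independent of $c$, and Plancherel then gives $\|\nabla u_\flat\|_{L^2}^2\lesssim\mathcal{K}$ and $\|(-\Delta)^{1/4}u_\sharp\|_{L^2}^2\lesssim c^{-1}\mathcal{K}$ for $\mathcal{K}=\|u\|_c^2-mc^2\|u\|_{L^2}^2$. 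The Gagliardo--Nirenberg exponents $\theta=\frac{3(p-2)}{2p}$ (with $m=1$) and $\theta=\frac{3(p-2)}{p}$ (with $m=\frac12$) are correct and both lie in $[0,1]$ for $p\in(8/3,3)$; raising to the $p$-th power yields exactly the two terms of the first display, with mass exponents $\frac{6-p}{2}$ and $6-2p$ as claimed. For the refined form on $\mathcal{O}_c$, the absorption $c^{-(3p-6)/2}\mathcal{K}^{(3p-6)/2}\leq c^{-1/2}\mathcal{K}$ does reduce to $\mathcal{K}^{(3p-8)/2}\leq c^{(3p-7)/2}$; since $u\in\mathcal{O}_c$ gives $\mathcal{K}\leq\|u\|_c^2<c^{2s}$ and, with $s=\frac{3p-7}{6p-16}=\frac{3p-7}{2(3p-8)}$, one has $s(3p-8)=\frac{3p-7}{2}$, this is tight and requires $3p-8>0$, i.e.\ exactly the hypothesis $p>8/3$. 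Your remark that this identity is \emph{why} the paper chose $s$ that way is a useful piece of insight the paper leaves implicit.
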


Now, we consider the following nonlinear Dirac equations with a fixed frequency:
 \begin{equation}\label{Dirac_fix}
  \mathscr{D}_cu  - \omega_cu-|u|^{p-2}u=0,
  \end{equation}
  which is useful for proving the existence and nonrelativistic limit of energy ground state of \eqref{Dirac_e}.
  For the fixed constant $C_i>0$ $(i=1,2,3)$, set
 \begin{equation*}
     \begin{split}
         \mathscr{U}_c(C_1, C_2, C_3):=\{&u\in E_c:
         \text{ there exist} \,\,\omega_c\in (mc^2- C_1, mc^2- C_2), \\&\text{ such that}\, u\,\, \text{is a weak solution of }\, \,\eqref{Dirac_fix}\,\, \text{and}\,\,\|u\|_{H^{1/2}}\leq C_3\}.
      \end{split}
 \end{equation*}

\begin{proposition}\label{bound}  For the fixed constant $C_i>0$ $(i=1,2,3)$,
  there exists $c_0>0$, such that for $c>c_0$ and $u\in \mathscr{U}_c(C_1, C_2, C_3)$,
  the following holds.
  \begin{enumerate}
    \item [(1)] $\|u^+\|_{L^2}^2>\|u^-\|_{L^2}^2$.
    \item [(2)] For each $q>1$, there exists a constant $C$ which is  dependent on the $p, q, C_1, C_2, C_3$, such that
    $$
   \| u\|_{W^{2,q}} \leq C.$$
   \item[(3)] For each $q>1$, there exists a constant $C$ which is  dependent on the $p, q, C_1, C_2, C_3$, such that
   $$
   \| u^-\|_{W^{1,q}}\leq\frac{C}{c^2}, \quad \| u^-\|_{W^{2,q}}\leq\frac{C}{c}.
   $$
   \item[(4)] $u/\|u\|_{L^2} \in \mathcal{O}_c$ and 
 $u^+/\|u^+\|_{L^2} \in \mathcal{O}_c^+.$
 \item [(5)] Let $\mathcal{I}^c_{\omega_c}$ be the action functional corresponding to \eqref{Dirac_fix}, then, there exist a constant $C>0$, such that
 $\mathcal{I}^c_{\omega_c}(u)\geq C.$
  \end{enumerate}
\end{proposition}
\begin{proof}
  \begin{enumerate}
    \item [(1)] By multiplying both sides of \eqref{Dirac_fix} by $u^+ - u^-$
    and integrate, and applying Lemma \ref{loc_ineq}, we obtain
      \begin{equation*}
        \begin{split}
          &\omega_c(\|u^+\|_{L^2}^2-\|u ^-\|_{L^2}^2 ) = \|u \|_{c}^2 - \int_{\mathbb{R}^3}|u |^{p-2}\Re(u , u ^+- u ^-)\\
          &\geq \|u \|_{c}^2 - C\|u \|_{L^p}^p \geq (1-Cc^{-1/2})\|u\|_c ^2 >0.
        \end{split}
      \end{equation*}
      which yields $\|u^+\|_{L^2}^2>\|u^-\|_{L^2}^2$ for large $c>0$.
    \item [(2)] By Hölder interpolation inequality, we have
    $$
    \|u\|_{L^{2p-2}}\leq \|u\|_{L^3}^{\frac{4-p}{p-1}}\|u\|_{L^6}^{\frac{2p-5}{p-1}},
    $$
   and according to Lemma \ref{GN}, we obtain
   \begin{equation*}
       \begin{split}
           \|\mathscr{D}_c u\|_{L^2}^2= m^2 c^4\|u\|_{L^2}^2 &+
           c^2\|\nabla u\|_{L^2}^2=\omega_c^2\|u\|_{L^2}^2 +2\omega_c\|u\|_{L^p}^p + \|u\|_{L^{2p-2}}^{2p-2}\\
           &\leq m^2 c^4\|u\|_{L^2}^2 + C mc^2\| \nabla  u\|_{L^2}^{\frac{3p-6}{2}} + C\|\nabla u\|_{L^2}^{4p-10},
       \end{split}
   \end{equation*}
   Hence $\|\nabla u\|_{L^2}\leq C$.  
    By the Sobolev embedding \(H^1(\mathbb{R}^3) \hookrightarrow L^{3(p-1)}(\mathbb{R}^3)\) and Lemma \ref{new_lpp},
     we obtain
    \[
    \|u\|_{W^{1,3}} \leq C \left\| |u|^{p-2}u \right\|_{L^3} \leq C.
    \]
    Furthermore, using the Sobolev embedding \(W^{1,3}(\mathbb{R}^3) \hookrightarrow L^q(\mathbb{R}^3)\) for any \(3 < q < \infty\), we conclude
    \[
    \|u\|_{L^q} \leq C.
    \]
    Thus, by Lemma \ref{new_lpp} and \eqref{Dirac_fix}, we get
    \[
    \|u\|_{W^{1,q}} \leq C \left\| |u|^{p-2}u \right\|_{L^q} \leq C,
    \]
    \[
    \|u\|_{W^{2,q}} \leq C \left\| |u|^{p-2}u \right\|_{W^{1,q}} \leq C.
    \]
    Using Sobolev embedding once more, we also have
    \[
    \|u\|_{L^\infty} + \|\nabla u\|_{L^\infty} \leq C.
    \]
    \item[(3)] By taking the operator $P_c^-$ to both sides of \eqref{Dirac_fix}, we obtain
    $$
    -(\mathscr{D}_c+\omega_c)u^-= P_c^-\left(|u|^{p-2}u \right),
    $$
    then, it follows from Lemma \ref{new_lpp}, we get
    $$
    \|u^-\|_{W^{1,q}}\leq \frac{C}{c^2}\left\| |u|^{p-2}u\right\|_{W^{1,q}} \leq \frac{C}{c^2},
    $$
    and
    $$
    \|u^-\|_{W^{2,q}}\leq \frac{C}{c}\left\| |u|^{p-2}u\right\|_{W^{1,q}} \leq \frac{C}{c}.
    $$
   \item[(4)] By Gagliardo-Nirenberg inequality, it follows that
   \begin{equation*}
       \begin{split}
           \|u \|_{c}^2&=\omega_c(\|u^+\|_{L^2}^2-\|u ^-\|_{L^2}^2 )+ \int_{\mathbb{R}^3}|u |^{p-2}\Re(u , u ^+- u ^-)\\
           &\leq mc^2\|u\|_{L^2}^2 + C \|u\|_{L^p}^p\\
           &\leq mc^2\|u\|_{L^2}^2 + C\|\Delta u\|_{L^2}^{\frac{3p-6}{4}}\|u\|_{L^2}^{\frac{6+p}{4}}\\
           &\leq (mc^2 + C)\|u\|_{L^2}^2 \\
           & < c^{2s}\|u\|_{L^2}^2,
       \end{split}
   \end{equation*}
   which yields $u/\|u\|_{L^2}\in \mathcal{O}_c$, similarly, 
   $u^+/\|u^+\|_{L^2}\in \mathcal{O}_c^+.$
   \item[(5)]
We first prove that $\|u\|_{H^{1/2}}$ is  bounded from below. Actually,
   $\omega_c\in (0,mc^2)$ shows that there exists $C>0$ which depends on $\omega$ and $c$, such that
   $$
   \|u\|_c^2-\omega\|u\|_{L^2}^2\geqslant C\|u\|_c^2.
   $$
   Since $d\mathcal{I}^{c}_{\omega_c}({u})[u^+- u^-]=0$, then there holds
   \begin{equation}\label{jams4}
       \begin{split}
        C\|u\|_c^2&\leq   \|u\|_c^2-\omega\|u\|_{L^2}^2\leq   \|u\|_c^2- \omega\left( \|u^+\|_{L^2}^2- 
          \|u^-\|_{L^2}^2\right)\\
          &=\frac{1}{2}d\mathcal{I}^c_{\omega_c}[u][u^+- u^-]\\
          &\leq  C_4\|u\|_c^{p},
       \end{split}
   \end{equation}
   which yields there exists $C>0$, such that
   $$
   \|u\|_c\geqslant C.
   $$
   If there exists a sequence of critical points for
   $\mathcal{I}^{c}_{\omega_c}$, which is denoted by $\{u_n\}$,  such that $\mathcal{I}^{c}_{\omega_c}(u_n)\to 0$, then
   \begin{equation*}
       \begin{split}
           \mathcal{I}^{c}_{\omega_c}(u_n)&=\mathcal{I}^{c}_{\omega_c}(u_n)-\frac{1}{2}d\mathcal{I}^{c}_{\omega_c}(u_n)[u_n]\\
           &=\frac{p-2}{p}\int_{\mathbb{R}^3}  |u_n|^p dx\to 0.
       \end{split}
   \end{equation*}
  By \eqref{jams4} and Lemma \ref{nar}, we have
   $$
   \|u_n\|_c^2\leq C d\mathcal{I}^{c}_{\omega_c}[u_n][u_n^+- u_n^-]\leq  C\|u\|_{L^p}^p\to 0,
   $$
   which is contradictory with the fact that $\|u\|_c$ is   bounded from below. 
  \end{enumerate}

\end{proof}
\begin{remark}
  \normalfont
  We remark that the decay rate of $1/c^2$ obtained for
   $\|u^-\|_{W^{1,q}}$ in Proposition \ref{bound} is optimal.
    However, for the decay rates of $\|u^-\|_{W^{2,q}}$,
     we are unable to establish that the optimal rate remains $1/c^2$. 
     This difficulty arises from the insufficient regularity of 
     the power-type nonlinearity $|u|^{p-2}u$. 
     If instead a Hartree-type nonlinearity of the form $(|x|^{-1} * |u|^2)u$ 
     is considered, we can prove that the optimal decay rate for Sobolev norms 
     of arbitrary order $s$, namely $\|u^-\|_{H^s}$, is indeed $1/c^2$.
\end{remark}
%要加上Gamma的条件，这样的问题很多

\medskip

\section{Existence of energy ground state}\label{section3}
In this section, we focus on proving the existence of energy ground state to \eqref{Dirac_e}.
We denote $\mathcal{S}^{c,\pm}:= \mathcal{S}\cap E_c^\pm.$
For $u\in H^{1/2}(\mathbb{R}^3,\mathbb{C}^4)$,
set
$$
A[u]:=\frac{2}{p}\int_{\mathbb{R}^3} |u|^p.
$$
Then, we introduce the family of
  functionals $\mathcal{I}^{c, \tau} : E_c\rightarrow \mathbb{R}$  related to \eqref{Dirac_e} :
$$
\begin{aligned}
  \mathcal{I}^{c, \tau}({u})=&\|u^+\|_c^2-\|u^-\|_c^2
-\tau^{\zeta} A[u],
\end{aligned}
$$
where $\zeta= 5-\frac{3p }{2}$.
For fixed $c>0$ and $u\in E_c$, we have $\mathcal{I}^{c,\tau}(u)$ is non-increasing with respect to $\tau\in (0,\infty)$. 
For the mass critical or subcritical case, i.e. $2<p\leq 8/3$, finding the critical points of the functional \( \mathcal{I}^{c,\tau} \) on \( \mathcal{S} \) is equivalent to finding the critical points of the reduced functional
$$
\mathcal{I}_{red}^{c,\tau}(u)= \sup_{\substack{w\in \operatorname{span}\{u\}\oplus E_c^-\\\|w\|_{L^2}=1}}\mathcal{I}^{c,\tau}(w), \quad
u\in \mathcal{S}^{c,+}.
$$
 Compared to \( \mathcal{I}^{c,\tau} \), the reduced functional is bounded from below on \(  \mathcal{S}^{c,+} \). Using the concentration compactness principle, we can find a minimizer of the functional \( \mathcal{I}_{red}^{c,\tau} \) on \( \mathcal{S}^{c,+} \). 
In the case of $L^2$-supercritical, it is not appropriate to directly reduce the functional onto \(  \mathcal{S}^{c,+} \), as the reduced functional is no longer bounded from below. Therefore, we need to find a suitable open set on \(  \mathcal{S}^{c,+} \) for the reduction.

%Next, we will use the reduction method to study the normalized solutions of  \eqref{Dirac}.

\subsection[]{Maximization problem}

In order to find the energy ground state of \eqref{Dirac_e}, we first reduce the functional onto the open set $\mathcal{O}_c^+$ by considering the maximization problem.
For any $w\in \mathcal{O}_c^+$ with $\|w\|_{L^2}^2=1 $, denote 
$$
S(w) =\{ u\in \mathcal{S}: u^+\in \operatorname{span}\{w\} \}.
$$
Our  first step is to maximize the functional $\mathcal{I}^{c, \tau }$ in the space $S(w)$. The tangent space of $S(w)$ at $u\in S(w)$ is given by $$
T_u(S(w)) =\{h\in \operatorname{span}\{w\}\oplus E_c^-:\Re(u,h)_{L^2}=0 \}.
$$
The projection of the gradient $d \mathcal{I}^{c, \tau }(u)$ on $T_u(S(w))$  is given by
$$
d \mathcal{I}^{c, \tau }|_{S(w)}(u)[ h] = d \mathcal{I}^{c, \tau}(u)[h]- 2\omega(u)\Re(u,h)_{L^2},\quad \forall h\in \operatorname{span}\{w\}\oplus E_c^-,
$$
where $\omega(u)\in \mathbb{R}$ is such that $d \mathcal{I}^{c, \tau }|_{S(w)}(u)\in T_u(S(w)).$
  Our first results are about the compactness of the Palais-Smale sequence of $\mathcal{I}^{c, \tau }$ on $  S(w)$.

  \begin{proposition}\label{prop:3.1}
    Let \( w \in \mathcal{O}_c^+ \), and suppose \( \{u_n\} \subset S(w) \) is a Palais–Smale sequence for \( \mathcal{I}^{c,\tau} \) on \( S(w) \) at level \( l \). Then:
    \begin{enumerate}
      \item[(1)] \( \{u_n\} \) is bounded in \( E_c \).
      \item[(2)] If \( l > 0 \) and \( c \) is sufficiently large, then
            \[
            \liminf_{n \to +\infty} \omega(u_n) > 0.
            \]
      \item[(3)] Under the assumptions in (2), the sequence \( \{u_n\} \) is precompact in \( H^{1/2}(\mathbb{R}^3, \mathbb{C}^4) \).
    \end{enumerate}
    \end{proposition}
    
    \begin{proof}
    \begin{enumerate}
      \item[(1)] The proof of (1) can be found in \cite[Proposition 3.2]{CDGW24}.
    
      \item[(2)] Note that
            \[
            \begin{split}
              \omega(u_n) &= \frac{1}{2} d\mathcal{I}^{c,\tau}(u_n)[u_n] + o_n(1) \\
                          &= \mathcal{I}^{c,\tau}(u_n) - \frac{\tau^{\zeta}(p - 2)}{p} \int_{\mathbb{R}^3} |u_n|^p + o_n(1).
            \end{split}
            \]
            Hence,
            \[
            \mathcal{I}^{c,\tau}(u_n) - \frac{\tau^{\zeta}(p - 2)}{p} \int_{\mathbb{R}^3} |u_n|^p \leq \omega(u_n) \leq \mathcal{I}^{c,\tau}(u_n).
            \]
            The boundedness of \( \{u_n\} \) in \( E_c \) implies that \( \omega(u_n) \) is bounded. If \( l > 0 \), then for large \( n \), we have \( \|u_n^+\|_c > \|u_n^-\|_c \). By Lemma~\ref{com} and Lemma~\ref{loc_ineq},
            \[
            \begin{split}
              \omega(u_n) \|u_n^+\|_{L^2}^2 
              &= \frac{1}{2} d\mathcal{I}^{c,\tau}(u_n)[u_n^+] + o_n(1) \\
              &= \|u_n^+\|_c^2 - \tau^{\zeta} \int_{\mathbb{R}^3} |u_n|^{p-2} \Re(u_n, u_n^+) \, dx + o_n(1) \\
              &\geq \|u_n^+\|_c^2 - \left( \int_{\mathbb{R}^3} |u_n|^p \right)^{\frac{p-1}{p}} \cdot \left( \int_{\mathbb{R}^3} |u_n^+|^p \right)^{\frac{1}{p}} \\
              &\geq \|u_n^+\|_c^2 - C \|u_n\|_{L^p}^p \\
              &\geq \left( \frac{1}{2} - C c^{-1/2} \right) \|u_n\|_c^2.
            \end{split}
            \]
            Therefore, \( \liminf\limits_{n \to +\infty} \omega(u_n) > 0 \).
    
      \item[(3)] By (1), after passing to a subsequence, \( u_n \rightharpoonup u \) in \( E_c \). Since \( \dim(\operatorname{span}\{u_n^+\}) = 1 \), we have \( u_n^+ \to u^+ \) in \( E_c \). Moreover, by (2),
            \[
            \begin{aligned}
              o_n(1) &= -\frac{1}{2} d\mathcal{I}^c(u_n)[u_n^- - u^-] + \omega(u_n) \|u_n^- - u^-\|_{L^2}^2 \\
              &\geq \|u_n^- - u^-\|_c^2 + \tau^{\zeta} \int_{\mathbb{R}^3} |u_n|^{p-2} \Re(u_n, u_n^- - u^-) \, dx \\
              &\geq \|u_n^- - u^-\|_c^2 + \tau^{\zeta} \int_{\mathbb{R}^3} |u_n|^{p-2} |u_n^- - u^-|^2 \, dx \\
              &\quad - \tau^{\zeta} \int_{\mathbb{R}^3} |u_n|^{p-2} (|u^-| + |u_n^+|) |u_n^- - u^-| \, dx.
            \end{aligned}
            \]
            For any \( \varepsilon > 0 \), there exists \( R > 0 \) such that
            \[
            \int_{\mathbb{R}^3 \setminus B_R} |u^-|^3 \, dx < \varepsilon.
            \]
            Then for large \( n \),
            \[
            \begin{split}
              \int_{\mathbb{R}^3} |u_n|^{p-2} |u^-| |u_n^- - u^-| \, dx 
              &\leq \|u_n\|_{L^{3p-6}}^{p-2} \|u^-\|_{L^3(\mathbb{R}^3 \setminus B_R)} \|u_n^- - u^-\|_{L^3} \\
              &\quad + \|u_n\|_{L^{3p-6}}^{p-2} \|u^-\|_{L^3} \|u_n^- - u^-\|_{L^3(B_R)} = o_n(1).
            \end{split}
            \]
            Similarly,
            \[
            \int_{\mathbb{R}^3} |u_n|^{p-2} |u_n^+| |u_n^- - u^-| \, dx = o_n(1).
            \]
            Hence, \( \|u_n^- - u^-\|_{H^{1/2}} = o_n(1) \). This completes the proof of (3).\qedhere
    \end{enumerate}
    \end{proof}

Employing an analogous argument presented in \cite[Proposition 3.2]{Nolasco}, one can get the following results which imply  
the critical point of $ \mathcal{I}^{c, \tau }$ on $S(w)$ at positive levels is at a strict local maximum.
\begin{lemma}\label{lemm:3.1}
  Let \( w \in \mathcal{O}_c^+ \), and suppose \( u \in E_c \) is a critical point of \( \mathcal{I}^{c,\tau} \) on \( S(w) \) at a positive level, i.e.,
  \[
  d\mathcal{I}^{c,\tau}(u)[h] - 2\omega(u) \Re(u, h)_{L^2} = 0 \quad \forall h \in \operatorname{span}\{w\} \oplus E_c^-, \quad \text{and} \quad \mathcal{I}^{c,\tau}(u) > 0.
  \]
  Then
  \[
  d^2\mathcal{I}^{c,\tau}(u)[h, h] - 2\omega(u) \|h\|_{L^2}^2 < -(2 - C c^{-1/2}) \|h\|_c^2,
  \]
  and hence \( u \) is a strict local maximum of \( \mathcal{I}^{c,\tau} \) on \( S(w) \).
  \end{lemma}
  
  \begin{proof}
  Set \( u = a w + \eta \), where \( a = \sqrt{1 - \|\eta\|_{L^2}^2} \). Since \( \mathcal{I}^{c,\tau}(u) > 0 \), Proposition~\ref{prop:3.1} (2) implies \( \omega(u) > 0 \) and
  \[
  c^{2s} > \|w\|_c^2 \geq \|u^+\|_c^2 \geq \|\eta\|_c^2,
  \]
  so \( \eta \in \mathscr{O}_c \). For any \( h \in T_u S(w) \), write \( h = b w + \xi \) with \( b = a^{-1} \Re(\eta, \xi)_{L^2} \). Then
  \begin{equation}\label{prop3.3.1}
  \begin{aligned}
  d^2\mathcal{I}^{c,\tau}(u)[h, h]
  &= a^{-1} b \, d^2\mathcal{I}^{c,\tau}(u)[u, (b w - a^{-1} b \eta)] \\
  &\quad + 2 d^2\mathcal{I}^{c,\tau}(u)[b w, \xi] + a^{-2} b^2 d^2\mathcal{I}^{c,\tau}(u)[\eta, \eta] + d^2\mathcal{I}^{c,\tau}(u)[\xi, \xi],
  \end{aligned}
  \end{equation}
  where the second derivative is given by
  \begin{equation}\label{second_de}
  \begin{split}
  d^2\mathcal{I}^{c,\tau}(u)[h, k]
  &= 2 (h^+, k^+)_c - 2 (h^-, k^-)_c - 2\tau^{\zeta} \int_{\mathbb{R}^3} |u|^{p-2} \Re\langle h, k \rangle \, dx \\
  &\quad - 2(p - 2)\tau^{\zeta} \int_{\mathbb{R}^3} |u|^{p-4} \Re\langle u, h \rangle \cdot \Re\langle u, k \rangle \, dx.
  \end{split}
  \end{equation}
  From this, we obtain
  \begin{equation}
  \begin{split}
  d^2\mathcal{I}^{c,\tau}(u)[u, h]
  &= 2 (u^+, h^+)_c - 2 (u^-, h^-)_c - 2(p - 1)\tau^{\zeta} \int_{\mathbb{R}^3} |u|^{p-2} \Re\langle u, h \rangle \, dx \\
  &= 2 d\mathcal{I}^{c,\tau}(u)[h] - 2 (u^+, h^+)_c + 2 (u^-, h^-)_c \\
  &\quad - 2(p - 3)\tau^{\zeta} \int_{\mathbb{R}^3} |u|^{p-2} \Re\langle u, h \rangle \, dx.
  \end{split}
  \end{equation}
  
  By Lemma~\ref{loc_ineq}, there exists \( C > 0 \) such that
  \begin{equation}\label{prop3_3.3}
  \begin{split}
  &a^{-1} b \, d^2\mathcal{I}^{c,\tau}(u)[u, (b w - a^{-1} b \eta)] \\
  &= 4 b^2 \omega(u) \|w\|_{L^2}^2 - 4 \omega(u) b^2 a^{-2} \|\eta\|_{L^2}^2 - 2 b^2 \|w\|_c^2 - 2 b^2 a^{-2} \|\eta\|_c^2 \\
  &\quad - 2 b^2 (p - 3)\tau^{\zeta} \int_{\mathbb{R}^3} |u|^{p-2} |w|^2 \, dx + 2 a^{-2} b^2 (p - 3)\tau^{\zeta} \int_{\mathbb{R}^3} |u|^{p-2} |\eta|^2 \, dx \\
  &\leq 2 \omega(u) \|h\|_{L^2}^2 - 2 b^2 \|w\|_c^2 - 2 b^2 a^{-2} \|\eta\|_c^2 \\
  &\quad - 2 b^2 (p - 3)\tau^{\zeta} \int_{\mathbb{R}^3} |u|^{p-2} |w|^2 \, dx + 2 a^{-2} b^2 (p - 3)\tau^{\zeta} \int_{\mathbb{R}^3} |u|^{p-2} |\eta|^2 \, dx \\
  &\leq 2 \omega(u) \|h\|_{L^2}^2 - 2 \|h^+\|_c^2 + C c^{-1/2} \|h\|_c^2.
  \end{split}
  \end{equation}
  
  Again by Lemma~\ref{loc_ineq},
  \begin{equation}\label{prop3_3.4}
  \begin{split}
  2 d^2\mathcal{I}^{c,\tau}(u)[b w, \xi]
  &= - 4 \tau^{\zeta} \int_{\mathbb{R}^3} |u|^{p-2} \Re\langle b w, \xi \rangle \, dx \\
  &\quad - 4 (p - 2)\tau^{\zeta} \int_{\mathbb{R}^3} |u|^{p-4} \Re\langle u, b w \rangle \cdot \Re\langle u, \xi \rangle \, dx \\
  &\leq C c^{-1/2} \|h\|_c^2.
  \end{split}
  \end{equation}
  
  From \eqref{second_de},
  \begin{equation}\label{prop3_3.5}
  \begin{split}
  a^{-2} b^2 d^2\mathcal{I}^{c,\tau}(u)[\eta, \eta]
  &= - 2 a^{-2} b^2 \|\eta\|_c^2 - 2 \tau^{\zeta} a^{-2} b^2 \int_{\mathbb{R}^3} |u|^{p-2} |\eta|^2 \, dx \\
  &\quad - 2 (p - 2)\tau^{\zeta} a^{-2} b^2 \int_{\mathbb{R}^3} |u|^{p-4} |\langle u, \eta \rangle|^2 \, dx < 0,
  \end{split}
  \end{equation}
  and
  \begin{equation}\label{prop3_3.6}
  \begin{split}
  d^2\mathcal{I}^{c,\tau}(u)[\xi, \xi]
  &= - 2 \|\xi\|_c^2 - 2 \tau^{\zeta} \int_{\mathbb{R}^3} |u|^{p-2} |\xi|^2 \, dx \\
  &\quad - 2 (p - 2)\tau^{\zeta} \int_{\mathbb{R}^3} |u|^{p-4} |\langle u, \xi \rangle|^2 \, dx \\
  &\leq - 2 \|h^-\|_c^2.
  \end{split}
  \end{equation}
  
  Combining \eqref{prop3.3.1}--\eqref{prop3_3.6}, we conclude that
  \[
  d^2\mathcal{I}^{c,\tau}(u)[h, h] - 2 \omega(u) \|h\|_{L^2}^2 \leq - (2 - C c^{-1/2}) \|h\|_c^2.
  \]
  \end{proof}

For any $w \in \mathcal{O}_c^+$, we consider the following maximization problem
\begin{align}\label{111}
  \rho_\tau(w)=\sup _{{u} \in   S(w)} \mathcal{I}^{c, \tau }({u}) .
\end{align}
For $u\in \mathcal{O}_c$,
we define
$$ \mathcal{I}^{Pseudo, c, \tau}(u) = \|u\|_c^2- \tau^{\zeta} A[u] ,\quad
e^{Pseudo,c}(\tau) : = \inf_{u\in \mathcal{O}_c} \mathcal{I}^{Pseudo, c, \tau}(u).
$$
Then we have the following estimates on $ \rho_\tau(w)$ which imply $\rho_\tau(w) $
is bounded from below uniformly with respect to $w\in \mathcal{O}_c^+$.
\begin{lemma}\label{lemm:3.2}
  For any $w \in \mathcal{O}_c^{+}$, there holds
  $$
  e^{Pseudo,c}(\tau) \leq \rho_\tau(w) \leq \|w\|_c^2.
  $$
  In addition, there exists a constant $  C  >0$, such that $$ (1 -   Cc^{-1/2}  )\|w\|_{c}^2\leq \rho_\tau(w). $$
  %$  C  =  C  (m)>0$, such that 
%  $$
%c-  C   \leq (c-  C   )\| w \|_{H^{1/2}}^2\leq  \lambda_W  \leq \|w\|_{ c}^2 .
 % $$

\end{lemma}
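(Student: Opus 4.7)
The statement splits into three inequalities, and each falls out from the explicit form of $\mathcal{I}^{c,\tau}$ together with the definition of $\mathcal{O}_c^+$. I would organize the proof in three short steps.

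\textbf{Step 1: Upper bound $\lambda_W(\tau) \leq \|w\|_c^2$.} For any $u \in S_W$, write $u = u^+ + u^-$ with $u^+ \in W = \operatorname{span}\{w\}$, so $u^+ = aw$ for some $a \in \mathbb{C}$. Since $\|w\|_{L^2}=1$, the constraint $\|u\|_{L^2}^2 = |a|^2 + \|u^-\|_{L^2}^2 = 1$ forces $|a| \leq 1$. Since $A[u] \geq 0$, we obtain
$$
\mathcal{I}^{c,\tau}(u) \;=\; |a|^2\|w\|_c^2 - \|u^-\|_c^2 - \tau^\zeta A[u] \;\leq\; |a|^2\|w\|_c^2 \;\leq\; \|w\|_c^2.
$$
Taking the supremum gives the upper bound.

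\textbf{Step 2: Lower bound $e_c^{Pseudo}(\tau) \leq \lambda_W(\tau)$.} The natural test element is $u = w$ itself. Because $w \in \mathcal{O}_c^+$ we have $w \in E_c^+$ with $\|w\|_{L^2}=1$, hence $w \in S_W$ and $w^- = 0$. Therefore
$$
\lambda_W(\tau) \;\geq\; \mathcal{I}^{c,\tau}(w) \;=\; \|w\|_c^2 - \tau^\zeta A[w] \;=\; \mathcal{I}^{Pseudo,c,\tau}(w),
$$
and since $w \in \mathcal{O}_c$ the right-hand side is at least $e_c^{Pseudo}(\tau) = \inf_{v\in\mathcal{O}_c}\mathcal{I}^{Pseudo,c,\tau}(v)$.

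\textbf{Step 3: Quantitative lower bound $(1 - Cc^{-1/2})\|w\|_c^2 \leq \lambda_W(\tau)$.} Using the same test function $w$, I only need to control $\|w\|_{L^p}^p$ by $c^{-1/2}\|w\|_c^2$. By H\"older, Gagliardo–Nirenberg (Lemma \ref{GN}), and the inequality $\|(-\Delta)^{1/4}w\|_{L^2}^2 \leq c^{-1}\|w\|_c^2$ used in the proof of Lemma \ref{loc_ineq}, one has
$$
\int_{\mathbb{R}^3}|w|^p \;\leq\; \|w\|_{L^{3p-6}}^{p-2}\|w\|_{L^3}^2 \;\leq\; C\,c^{-(3p-6)/2}\|w\|_c^{3p-6}.
$$
Now $w\in\mathcal{O}_c^+$ gives $\|w\|_c \leq c^s$ with $s = (3p-7)/(6p-16)$. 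Writing $\|w\|_c^{3p-6} = \|w\|_c^{3p-8}\cdot\|w\|_c^2$ and using $s(3p-8) = (3p-7)/2$, the exponents combine to
$$
-\tfrac{3p-6}{2} + s(3p-8) \;=\; -\tfrac{3p-6}{2} + \tfrac{3p-7}{2} \;=\; -\tfrac{1}{2},
$$
so $\int |w|^p \leq C c^{-1/2}\|w\|_c^2$. Substituting into $\mathcal{I}^{c,\tau}(w)$ (absorbing $\tau^\zeta$ into $C$ for $\tau$ in any fixed bounded range) yields
$$
\lambda_W(\tau)\;\geq\;\mathcal{I}^{c,\tau}(w)\;\geq\;\|w\|_c^2 - C\,c^{-1/2}\|w\|_c^2 \;=\;(1 - Cc^{-1/2})\|w\|_c^2,
$$
completing the proof.

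\textbf{Main obstacle.} Steps 1 and 2 are essentially tautological from the definitions. The only delicate point is the quantitative bound in Step 3: one must verify that the precise choice $s = (3p-7)/(6p-16)$ (whose role has been mysterious up to this point) is exactly what is needed to cancel the Gagliardo–Nirenberg exponents and leave the gain of $c^{-1/2}$. This identity is the mechanism by which the $L^2$-supercritical setting of $\mathcal{O}_c^+$ still behaves like an $L^2$-subcritical problem after the truncation $\|u\|_c<c^s$.
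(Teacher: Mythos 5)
Your proof is correct and follows the same route as the paper: Steps 1 and 2 are word-for-word the paper's argument, and Step 3 reaches the same conclusion. The only difference is cosmetic: the paper closes Step 3 by directly invoking Lemma \ref{loc_ineq} (with $u=v=w$ and $\|w\|_{L^2}=1$), whereas you unpack that lemma's proof inline, explicitly tracking the exponents to confirm that the choice $s=(3p-7)/(6p-16)$ produces exactly the $c^{-1/2}$ gain. That explicit bookkeeping is a nice check on the design of $\mathcal{O}_c^+$, but it duplicates work already encapsulated in Lemma \ref{loc_ineq}; the single citation is cleaner once that lemma is in hand.
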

\begin{proof}
%Since $w\in S^+$, there exists $  C  >0$, such that $\|w\|_{H^{1/2}}^{3pq-6}\leq   C  \|w\|_{H^{1/2}}^2$. Then by Corollary \ref{lemm:2.3} and Lemma \ref{lemm:2.4}, we have
%\[ B[w]\leq    C    \|(-\Delta)^{1/4}w\|_{L^2}^{3pq-6}\|w\|_{L^2}^{6-2pq}\leq   C  \|w\|_{H^{1/2}}^{3pq-6}\leq   C  \|w\|_{H^{1/2}}^{2},\]
%and
%\[A[w]\leq   C    \| (-\Delta)^{1/4}w\|_{L^2}^2\|{w}\|_{L^2}^{2s-2}\leq   C  \|w\|_{H^{1/2}}^{2}.\]
%Therefore, we get

  %\begin{align*}
%\lambda_W \geq  \mathcal{I}^{c, \tau }(w)&=\|w\|_c^2- 
%\tau A[w] - \tau^\kappa B[w]
%\\
  %& \geq  \|w\|_c^2 -   C   \| w \|_{H^{1/2}}^2\\
 % &\geq  (c-  C   )\| w \|_{H^{1/2}}^2\\
 % &\geq     c-  C   .
 % \end{align*}
It is clear that
\begin{equation*}
    \begin{split}
        \rho_\tau(w) \geqslant\mathcal{I}^{c,\tau}(w)&= \|w\|_c^2 - \tau^{\zeta}A[w] \\
        &\geq 
        \inf_{u\in \mathcal{O}_c} \left(\|u\|_c^2- \tau^{\zeta} A[u] \right)\\
        &=
e^{Pseudo,c}(\tau ).
    \end{split}
\end{equation*}
On the other hand, for any ${u} \in   S(w)$,we have $\mathcal{I}^{c, \tau }({u}) \leq \| u^+ \| _{ c}^2 \leq \|w\|_{   c}^2$, that means $ \rho_\tau(w)  \leq \|w\|_{  c}^2$.
Finally, it follows from Lemma \ref{loc_ineq}
that 
$$
\rho_\tau(w) \geq \|w\|_c^2 - \tau^{\zeta}A[w] 
\geq (1 -   Cc^{-1/2}  )\|w\|_{c}^2.
$$
\end{proof}

We finish the maximization problem in view of the following proposition.
\begin{proposition}\label{prop:3.2}
  For any \( w \in \mathcal{O}_c^+ \), there exists \( \varphi^{c,\tau}(w) \in S(w) \), such that
  \[
  \mathcal{I}^{c,\tau}\left(\varphi^{c,\tau}(w)\right) = \sup_{u \in S(w)} \mathcal{I}^{c,\tau}(u) = \rho_\tau(w).
  \]
  Moreover, the following hold:
  \[
  d\mathcal{I}^{c,\tau}(\varphi^{c,\tau}(w))[h] - 2\omega(\varphi^{c,\tau}(w)) \Re\left(\varphi^{c,\tau}(w), h\right)_{L^2} = 0, \quad \forall h \in \operatorname{span}\{w\} \oplus E_c^-,
  \]
  and
  \begin{enumerate}
    \item[(1)] \(\|\varphi^{c,\tau}(w)^+\|_{L^2}^2 - \|\varphi^{c,\tau}(w)^-\|_{L^2}^2 > 0\),
    \item[(2)] \(\tau^\zeta A[\varphi^{c,\tau}(w)] + \|\varphi^{c,\tau}(w)^-\|_c^2 \leq \tau^{\zeta} A[w]\),
    \item[(3)] Up to a phase factor, \( \varphi^{c,\tau}(w) \) is unique,
    \item[(4)] The map \( \varphi^{c,\tau} : v \in \mathcal{O}_c^+ \to \varphi^{c,\tau}(v) \) is smooth.
  \end{enumerate}
  \end{proposition}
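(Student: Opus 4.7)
The plan is to combine Ekeland's variational principle with the compactness furnished by Proposition~\ref{prop:3.1} and the strict local maximum property from Lemma~\ref{lemm:3.1}. Since $c$ is large, Lemma~\ref{lemm:3.2} gives $\lambda_W(\tau) \geq (1 - Cc^{-1/2})\|w\|_c^2 > 0$. First I would take a maximizing sequence $\{u_n\} \subset S_W$ with $\mathcal{I}^{c,\tau}(u_n) \to \lambda_W(\tau)$ and apply Ekeland's variational principle on the Hilbert submanifold $S_W$ to upgrade it to a Palais--Smale sequence at the positive level $\lambda_W(\tau)$. Proposition~\ref{prop:3.1} then delivers precompactness in $H^{1/2}(\mathbb{R}^3,\mathbb{C}^4)$, producing a maximizer $\varphi^{c,\tau}(w)\in S_W$ attaining $\lambda_W(\tau)$ with positive multiplier $\omega(\varphi^{c,\tau}(w)) > 0$ by Proposition~\ref{prop:3.1}(ii). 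The Lagrange multiplier rule on $S_W$ immediately yields the Euler--Lagrange identity displayed in the statement.

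For (i), I would test the Euler--Lagrange equation with $h = \varphi^+ - \varphi^-$. Using $\Re(\varphi,\varphi^+-\varphi^-)_{L^2} = \|\varphi^+\|_{L^2}^2 - \|\varphi^-\|_{L^2}^2$, this reads
$$\omega(\varphi)\bigl(\|\varphi^+\|_{L^2}^2 - \|\varphi^-\|_{L^2}^2\bigr) = \|\varphi\|_c^2 - \tau^\zeta\!\!\int_{\mathbb{R}^3}\!\!|\varphi|^{p-2}\Re\langle\varphi,\varphi^+-\varphi^-\rangle\,dx.$$
Locating $\varphi$ in the regime controlled by Lemma~\ref{bound} and applying Lemma~\ref{loc_ineq} dominates the right-hand side from below by $(1 - Cc^{-1/2})\|\varphi\|_c^2$, and the sign then follows from $\omega(\varphi) > 0$. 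For (ii), $w \in S_W$ is itself a competitor, so $\mathcal{I}^{c,\tau}(\varphi) \geq \mathcal{I}^{c,\tau}(w) = \|w\|_c^2 - \tau^\zeta A[w]$; since $\varphi^+ = aw$ with $|a| \leq 1$ gives $\|\varphi^+\|_c^2 \leq \|w\|_c^2$, rearrangement yields $\|\varphi^-\|_c^2 \leq \tau^\zeta(A[w] - A[\varphi]) \leq \tau^\zeta A[w]$.

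The main obstacle is (iii), uniqueness modulo the $U(1)$-phase action. My plan is to exploit the full strength of Lemma~\ref{lemm:3.1}: the uniform bordered-Hessian estimate $d^2\mathcal{I}^{c,\tau}(u)[h,h] - 2\omega(u)\|h\|_{L^2}^2 \leq -(2 - Cc^{-1/2})\|h\|_c^2$ at every critical point $u$ with $\mathcal{I}^{c,\tau}(u) > 0$. After fixing a phase so that $a > 0$ in $u = aw + \eta$, the quotient $S_W/U(1)$ is parametrized smoothly by $\eta \in E_c^-$ with $\|\eta\|_{L^2} < 1$, and both the Palais--Smale compactness and the strict concavity descend to the reduced functional $J(\eta) := \mathcal{I}^{c,\tau}\bigl(\sqrt{1-\|\eta\|_{L^2}^2}\,w + \eta\bigr)$. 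If two distinct critical points $\eta_1 \neq \eta_2$ of $J$ both realized $\lambda_W(\tau)$, a min-max argument (say mountain pass, with the PS hypothesis supplied by Proposition~\ref{prop:3.1}) would produce a further critical point of $J$; but the uniform non-degeneracy forces \emph{every} critical point of $J$ at a positive level to be a strict local maximum, which is incompatible with a genuine saddle. Hence the critical set at level $\lambda_W(\tau)$ is isolated and therefore reduced to a single $U(1)$-orbit.

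Finally, for (iv), the non-degeneracy of the bordered Hessian at $\varphi^{c,\tau}(w)$ makes the linearization of the Euler--Lagrange map invertible on the tangent space $T_{\varphi^{c,\tau}(w)}S_W$; an application of the implicit function theorem to the parameterized system then yields smooth dependence of $\varphi^{c,\tau}(w)$ on $w \in \mathcal{O}_c^+$.
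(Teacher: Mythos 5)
Your proposal follows the paper's route essentially verbatim for the existence claim and items (i), (ii), and (iv): Ekeland plus Proposition~\ref{prop:3.1} to produce the maximizer, testing the Euler--Lagrange identity with $\varphi^+ - \varphi^-$ for (i), the competitor estimate for (ii), and bordered-Hessian non-degeneracy with the implicit function theorem for (iv) (which is what the Nolasco argument the paper cites amounts to).

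The one genuine gap is in item (iii). Your mountain-pass argument requires the min-max level $l$ to be strictly positive in two places: once so that Proposition~\ref{prop:3.1}(iii) provides Palais--Smale compactness (that conclusion needs a positive level), and again so that Lemma~\ref{lemm:3.1} can declare the resulting critical point a strict local maximum (the lemma is stated only for critical points at a positive level). You simply assert that a min-max would produce a further critical point, but if $l \leq 0$ the contradiction machinery never engages. The paper closes this hole by exhibiting one explicit connecting path $\gamma(t) = a(\eta(t))\,w + \eta(t)$ with $\eta(t) = t\eta_2 + (1-t)\eta_1$, and showing---using Lemma~\ref{loc_ineq} together with item (ii), which forces $\|\eta_i\|_c^2 \leq \tau^{\zeta} A[w] \lesssim c^{-1/2}\|w\|_c^2$, and the fact that $|a(\eta_i)|^2 > 1/2$ from item (i)---that $\mathcal{I}^{c,\tau}(\gamma(t)) \geq (1 - Cc^{-1/2})\|w\|_c^2 > 0$ uniformly in $t$, hence $l > 0$. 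That lower bound along a single connecting path is the load-bearing estimate in the uniqueness proof, and your argument needs it.
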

  
  \begin{proof}
  By Lemma~\ref{lemm:3.2}, \( \rho_\tau(w) > 0 \). Ekeland's variational principle implies the 
  existence of a Palais–Smale maximizing sequence \( \{u_n\} \subset S(w) \) for \( \mathcal{I}^{c,\tau} \) 
  at a positive level. Hence, by Proposition \ref{prop:3.1}, there exists \( \varphi^{c,\tau}(w) \in S(w) \) such that \( \|u_n - \varphi^{c,\tau}(w)\|_c \to 0 \), and \( \omega(u_n) \to \omega(\varphi^{c,\tau}(w)) > 0 \). Therefore,
  \[
  \mathcal{I}^{c,\tau}(\varphi^{c,\tau}(w)) = \sup_{u \in S(w)} \mathcal{I}^{c,\tau}(u),
  \]
  and \( \| d\mathcal{I}|_{S(w)}(\varphi^{c,\tau}(w)) \| = 0 \).
  
  \begin{enumerate}
    \item[(1)] By Lemma~\ref{com} and Lemma~\ref{loc_ineq}, we have
    \[
    \begin{aligned}
      &2\omega(\varphi^{c,\tau}(w)) \left( \|\varphi^{c,\tau}(w)^+\|_{L^2}^2 - \|\varphi^{c,\tau}(w)^-\|_{L^2}^2 \right) \\
      &= d\mathcal{I}^{c,\tau}(\varphi^{c,\tau}(w))(\varphi^{c,\tau}(w)^+ - \varphi^{c,\tau}(w)^-) \\
      &= 2\|\varphi^{c,\tau}(w)\|_c^2 - 2\tau \int_{\mathbb{R}^3} |\varphi^{c,\tau}(w)|^{p-2} \Re\left( \varphi^{c,\tau}(w), \varphi^{c,\tau}(w)^+ - \varphi^{c,\tau}(w)^- \right) \\
      &\geq 2\|\varphi^{c,\tau}(w)\|_c^2 - C \|\varphi^{c,\tau}(w)\|_{L^p}^p \\
      &\geq (2 - C c^{-1/2}) \|\varphi^{c,\tau}(w)\|_c^2 > 0.
    \end{aligned}
    \]
    Hence, for large \( c > 0 \), \( \|\varphi^{c,\tau}(w)^+\|_{L^2}^2 - \|\varphi^{c,\tau}(w)^-\|_{L^2}^2 > 0 \).
  
    \item[(2)] Note that
    \[
    \begin{aligned}
      \|w\|_c^2 - \tau^{\zeta} A[w] 
      &= \mathcal{I}^{c,\tau}(w) \leq \mathcal{I}^{c,\tau}(\varphi^{c,\tau}(w)) \\
      &\leq \|w\|_c^2 - \|\varphi^{c,\tau}(w)^-\|_c^2 - \tau^\zeta A[\varphi^{c,\tau}(w)],
    \end{aligned}
    \]
    which implies
    \[
    \tau^\zeta A[\varphi^{c,\tau}(w)] + \|\varphi^{c,\tau}(w)^-\|_c^2 \leq \tau^{\zeta} A[w].
    \]
  
    \item[(3)] Suppose there exist two distinct maximizers \( \varphi_1^{c,\tau}, \varphi_2^{c,\tau} \in S(w) \). Define the set
    \[
    \Gamma= \left\{ \gamma : [0,1] \to S(w) \mid \gamma(0) = \varphi_1^{c,\tau},\ \gamma(1) = \varphi_2^{c,\tau} \right\},
    \]
    and the min-max level
    \[
    l = \sup_{\gamma \in \Gamma} \min_{t \in [0,1]} \mathcal{I}^{c,\tau}(\gamma(t)).
    \]
    Write
    \[
    \varphi_1^{c,\tau} = a(\eta_1) w + \eta_1, \quad \varphi_2^{c,\tau} = a(\eta_2) w + \eta_2,
    \]
    where \( \eta_1, \eta_2 \in \mathscr{O}_c \cap E_c^- \), and \( a(\eta_i) = \sqrt{1 - \|\eta_i\|_{L^2}^2} \). By (1), \( |a(\eta_i)|^2 > \frac{1}{2} \). For \( t \in (0,1) \), define
    \[
    \eta(t) = t \eta_2 + (1 - t) \eta_1, \quad \gamma(t) = a(\eta(t)) w + \eta(t) \in \Gamma.
    \]
    Then \( \gamma(t)^\pm \in \mathscr{O}_c \cap E_c^\pm \). By Lemma~\ref{loc_ineq}, \( \tau^{\zeta} A[\gamma(t)] \leq C c^{-1/2} \|\gamma(t)\|_c^2 \). Hence, for \( t \in (0,1) \),
    \[
    \begin{split}
      \mathcal{I}^{c,\tau}(\gamma(t))
      &= \|\gamma(t)^+\|_c^2 - \|\gamma(t)^-\|_c^2 - \tau^{\zeta} A[\gamma(t)] \\
      &\geq (1 - C c^{-1/2}) \|\gamma(t)^+\|_c^2 - (1 + C c^{-1/2}) \|\gamma(t)^-\|_c^2 \\
      &\geq \frac{1}{2}(1 - C c^{-1/2}) \|w\|_c^2 - (1 + C c^{-1/2}) \|\eta_1\|_c^2 \\
      &\quad + \frac{1}{2}(1 - C c^{-1/2}) \|w\|_c^2 - (1 + C c^{-1/2}) \|\eta_2\|_c^2.
    \end{split}
    \]
    From (2) and Lemma~\ref{loc_ineq}, we have
    \[
    \|\eta_i\|_c^2 \leq \tau^{\zeta} A[w] \leq C c^{-1/2} \|w\|_c^2,
    \]
    so
    \[
    \mathcal{I}^{c,\tau}(\gamma(t)) \geq (1 - C c^{-1/2}) \|w\|_c^2 > 0.
    \]
    In particular, \( l \geq \min\limits_{t \in [0,1]} \mathcal{I}^{c,\tau}(\gamma(t)) > 0 \). By the mountain pass theorem, there exists a mountain pass critical point \( u \in S(w) \) for \( \mathcal{I}^{c,\tau} \) with \( \mathcal{I}^{c,\tau}(u) > 0 \), contradicting Lemma~\ref{lemm:3.1}.
  
    \item[(4)] The smoothness of \( \varphi^{c,\tau} \) follows from a similar argument as in \cite[Proposition 3.6]{Nolasco}.
  \end{enumerate}
  \end{proof}

\subsection{Upper bound estimation for ground state energy}\label{appendix_1}
Based on the previous discussion, the reduced functional $\mathcal{I }_{red}^{c,\tau}  :\mathcal{O}_c^+ \rightarrow \mathbb{R}$  is well defined and it is expressed as 
$$
\mathcal{I }_{red}^{c,\tau}  (w)=\mathcal{I}^{c, \tau }\left(\varphi^{c, \tau}(w)\right)=\sup _{{u} \in   S(w)} \mathcal{I}^{c, \tau }({u})$$
The minimax problem has now been converted into a minimization problem. That is
$$
e^c(\tau):=\inf _{\substack{w \in \mathcal{O}_c^+  }} \sup _{{u} \in   S(w)} \mathcal{I}^{c, \tau }({u})=\inf _{w \in \mathcal{O}_c^{+}} \mathcal{I }_{red}^{c,\tau}  (w).
$$
 
The following proposition is critical in proving the
 existence of the energy ground state of \eqref{Dirac_e}. Additionally, this proposition reveals the convergence rate of the ground state energy associated with \eqref{Dirac_e}.
 
\begin{proposition}\label{mulp}
   There exist constants $c_0, C>0$, such that for $c>c_0$ and every  $\tau\in (0,1]$, there holds   $$e^c(\tau)\in\left[mc^2 - C,\,mc^2 + e^{\infty}(\tau)+ \frac{C \tau^2}{c^2}\right],$$
   where $e^{\infty}(\tau)$ is defined as the ground state energy of the functional 
   $$
\mathcal{I}^{\infty, \tau}(f) := \frac{1}{2m}\|\nabla f\|_{L^2}^2 - \tau^{\zeta} A[f], \quad f\in H^1(\mathbb{R}^3, \mathbb{C}^2) \,\,\text{or}\,\,f\in H^1(\mathbb{R}^3, \mathbb{C}^4),
$$
more precisely
   $$
   e^{\infty}(\tau):=\inf_{\substack{\|f\|_{L^2}=1}}\mathcal{I}^{\infty, \tau}(f).
   $$
\end{proposition}

\begin{lemma}\label{AA_1}
  $e^\infty(\tau)=\tau^2 e^\infty(1)<0$.
\end{lemma}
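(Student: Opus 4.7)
The plan is to prove this scaling identity by finding a one-parameter family of $L^2$-preserving dilations under which the functional $\mathcal{I}^{\infty,\tau}$ transforms in a homogeneous way. The key observation is that the parameter $\tau$ appears only as an overall weight $\tau^{\zeta}$ in front of the nonlinear term, with $\zeta = 5 - 3p/2$, so we should look for a dilation whose effect on $A[u]$ exactly compensates this factor while producing an easily-tracked multiplier on the kinetic term.

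First I would introduce the standard $L^2$-invariant scaling $u_\lambda(x) := \lambda^{3/2} u(\lambda x)$, which satisfies $\|u_\lambda\|_{L^2} = \|u\|_{L^2}$ and
\[
\|\nabla u_\lambda\|_{L^2}^2 = \lambda^2 \|\nabla u\|_{L^2}^2, \qquad A[u_\lambda] = \lambda^{3p/2 - 3} A[u].
\]
Choosing $\lambda = \tau$ and using $\zeta + (3p/2 - 3) = 2$, a direct computation gives
\[
\mathcal{I}^{\infty,\tau}(u_\tau) = \frac{\tau^2}{2m}\|\nabla u\|_{L^2}^2 - \tau^{\zeta + 3p/2 - 3} A[u] = \tau^2 \mathcal{I}^{\infty,1}(u).
\]

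Next I would note that the map $u \mapsto u_\tau$ is a bijection from $\{u \in H^1(\mathbb{R}^3, \mathbb{C}^2) : \|u\|_{L^2} = 1\}$ onto itself (its inverse being the scaling with parameter $\tau^{-1}$). Taking the infimum over this set therefore yields
\[
e_\infty(\tau) = \inf_{\|u\|_{L^2}=1} \mathcal{I}^{\infty,\tau}(u_\tau) = \tau^2 \inf_{\|u\|_{L^2}=1} \mathcal{I}^{\infty,1}(u) = \tau^2 e_\infty(1),
\]
which is the desired identity.

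There is no real obstacle here; the only point to verify carefully is that the exponent matching $\zeta + 3p/2 - 3 = 2$ genuinely holds with the definition $\zeta = 5 - 3p/2$ stated in the paper, so that the single choice $\lambda = \tau$ does all the work. Once that arithmetic is confirmed, the scaling is an isometry of the constraint manifold and the identity follows by homogeneity.
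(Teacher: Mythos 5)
Your proof is correct and follows exactly the same approach as the paper: the $L^2$-preserving dilation $u_\tau(x)=\tau^{3/2}u(\tau x)$ together with the exponent identity $\zeta+\tfrac{3p}{2}-3=2$. Your write-up merely spells out the arithmetic and the bijectivity of the scaling, which the paper leaves implicit.
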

\begin{proof}
 For $\tau \in (0,1)$, we define $f_\tau(x) := \tau^{3/2} f(\tau x)$. A direct computation shows that
 \[
 \mathcal{I}^{\infty, \tau}(f_\tau) = \tau^2 \mathcal{I}^{\infty, 1}(f).
 \]
 Consequently, one has $e^\infty(\tau) = \tau^2 e^\infty(1)$. Furthermore, for any $f\in H^1(\mathbb{R}^3, \mathbb{C}^2)$ with $\|f\|_{L^2} = 1$ and for sufficiently small $\tau > 0$, it follows that
 \[
 e^\infty(1) \le \mathcal{I}^{\infty, 1}(f_\tau) < 0.
 \]
\end{proof}
Let \( f \in H^1(\mathbb{R}^3, \mathbb{C}^2) \) be a minimizer of \( e^{\infty}(1) \), and let \( \omega_{\infty,\tau} \) be the corresponding Lagrange multiplier. Define  
\begin{equation}\label{llp_d}
f_\tau(x) := \left( \tau^{3/2} f(\tau x),\ 0 \right)^T \in H^1(\mathbb{R}^3, \mathbb{C}^4), \quad f_{c,\tau} := \mathbf{U}_{\mathrm{FW}}^{-1} f_\tau \in \mathcal{S}^{c,+}. 
\end{equation}
 Since \(\mathbf{U}_{\mathrm{FW}}^{-1}\) is a unitary operator on \(H^s(\mathbb{R}^3; \mathbb{C}^4)\) for any \(s > 0\), it follows that for large \(c\), we have
  \(
  f_{c,\tau} \in \mathcal{O}_c^+.
  \)

\begin{lemma}\label{unig}
  For every $2 \leq r \leq 3$, the following estimate holds:
  \[
  \left\|\varphi^{c,\tau}(f_{c,\tau})\right\|_{L^r} \leq C \tau^{\frac{3r - 6}{2r}}.
  \]
  \end{lemma}
  
  \begin{proof}
  By the Sobolev inequality and the unitarity of $\mathbf{U}_{\mathrm{FW}}^{-1}$, we obtain
  \begin{equation}\label{stb_16}
  \begin{aligned}
  \left\|\left(\varphi^{c,\tau}(f_{c,\tau})\right)^+\right\|_{L^r} 
  &\leq  \left\|f_{c,\tau}\right\|_{L^r} \\
  &\leq C \left\|(-\Delta)^{\frac{3}{4} - \frac{3}{2r}} f_{c,\tau}\right\|_{L^2} \\
  &= C \left\|(-\Delta)^{\frac{3}{4} - \frac{3}{2r}} f_{\tau}\right\|_{L^2} \\
  &\leq C \tau^{\frac{3r - 6}{2r}}.
  \end{aligned}
  \end{equation}
  
  Moreover, Proposition~\ref{prop:3.2} (2) implies
  \begin{equation}\label{youyong}
  \begin{aligned}
  \left\|\left(\varphi^{c,\tau}(f_{c,\tau})\right)^-\right\|_{L^2}^2 &= \tau^2 \cdot \mathcal{O}\left(\frac{1}{c^2}\right), \\
  \left\|(-\Delta)^{1/4} \left(\varphi^{c,\tau}(f_{c,\tau})\right)^-\right\|_{L^2}^2 &= \tau^2 \cdot \mathcal{O}\left(\frac{1}{c}\right).
  \end{aligned}
  \end{equation}
  
  Using~\eqref{youyong} together with the Sobolev and interpolation inequalities, we find
  \begin{equation}\label{stb_17}
  \begin{aligned}
  \left\|\left(\varphi^{c,\tau}(f_{c,\tau})\right)^-\right\|_{L^r} 
  &\leq C \left\|(-\Delta)^{\frac{3}{4} - \frac{3}{2r}} \varphi^{c,\tau}(f_{c,\tau})^-\right\|_{L^2} \\
  &\leq C \left\|(-\Delta)^{1/4} \left(\varphi^{c,\tau}(f_{c,\tau})\right)^-\right\|_{L^2}^{3 - 6/r} 
  \cdot \left\|\left(\varphi^{c,\tau}(f_{c,\tau})\right)^-\right\|_{L^2}^{6/r - 2} \\
  &\leq \tau \cdot \mathcal{O}\left(\frac{1}{c^{\frac{6 - r}{2r}}}\right).
  \end{aligned}
  \end{equation}
  
  The desired result now follows by combining estimates~\eqref{stb_16} with \eqref{stb_17}:
  \[
  \left\|\varphi^{c,\tau}(f_{c,\tau})\right\|_{L^r} \leq C \tau^{\frac{3r - 6}{2r}}.
  \]
  \end{proof}

    For convenience, the Lagrange multiplier corresponding to $\varphi^{c,\tau}(f_{c, \tau})$ is denoted by  
    $\omega := \omega\left(\varphi^{c,\tau}(f_{c, \tau})\right)$.  
    \begin{lemma}\label{stb_1}
    The family $\left\{\left(\varphi^{c,\tau}(f_{c, \tau})\right)^-\right\}$ converges to 0 in $H^{1/2}(\mathbb{R}^3,\mathbb{C}^4)$ as $c \to \infty$. Moreover, one of the following estimates holds:
    \[
    \left\|\left(\varphi^{c,\tau}(f_{c, \tau})\right)^-\right\|_{L^2}^2 \leq \tau^4 \cdot \mathcal{O}\left(\frac{1}{c^4}\right), \quad
    \left\|(-\Delta)^{1/4}\left(\varphi^{c,\tau}(f_{c, \tau})\right)^-\right\|_{L^2}^2 \leq \tau^3 \cdot \mathcal{O}\left(\frac{1}{c^2}\right).
    \]
    \end{lemma}
    
    \begin{proof}
    From the identity
    \[
    d\mathcal{I}^{c,\tau}\left(\varphi^{c,\tau}(f_{c, \tau})\right)\left[\left(\varphi^{c,\tau}(f_{c, \tau})\right)^-\right] = \omega \left\|\left(\varphi^{c,\tau}(f_{c, \tau})\right)^-\right\|_{L^2}^2,
    \]
    we deduce
    \begin{equation*}
    \begin{aligned}
    \left\|\left(\varphi^{c,\tau}(f_{c, \tau})\right)^-\right\|_c^2 
    &= -\tau^{\zeta} \int_{\mathbb{R}^3} \left|\varphi^{c,\tau}(f_{c, \tau})\right|^{p-2} \Re\left( \varphi^{c,\tau}(f_{c, \tau}), \left(\varphi^{c,\tau}(f_{c, \tau})\right)^- \right) 
    - \omega \left\|\left(\varphi^{c,\tau}(f_{c, \tau})\right)^-\right\|_{L^2}^2 \\
    &\leq \tau^{\zeta} \int_{\mathbb{R}^3} \left|\varphi^{c,\tau}(f_{c, \tau})\right|^{p-1} \left|\left(\varphi^{c,\tau}(f_{c, \tau})\right)^-\right|.
    \end{aligned}
    \end{equation*}
    Now, by Lemma~\ref{unig}, for $p \leq \frac{5}{2}$, we obtain
    \begin{equation*}
    \begin{aligned}
    m c^2 \left\|\left(\varphi^{c,\tau}(f_{c, \tau})\right)^-\right\|_{L^2}^2 
    &\leq \left\|\left(\varphi^{c,\tau}(f_{c, \tau})\right)^-\right\|_c^2 \\
    &\leq \tau^{\zeta} \left\|\varphi^{c,\tau}(f_{c, \tau})\right\|_{L^{2p-2}}^{p-1} \left\|\left(\varphi^{c,\tau}(f_{c, \tau})\right)^-\right\|_{L^2} \\
    &\leq C \tau^2 \left\|\left(\varphi^{c,\tau}(f_{c, \tau})\right)^-\right\|_{L^2},
    \end{aligned}
    \end{equation*}
    which implies
    \[
    \left\|\left(\varphi^{c,\tau}(f_{c, \tau})\right)^-\right\|_{L^2}^2 \leq \tau^4 \cdot \mathcal{O}\left(\frac{1}{c^4}\right).
    \] 
    If instead $p \geq \frac{7}{3}$, then
    \begin{equation*}
    \begin{aligned}
    c \left\|(-\Delta)^{1/4}\left(\varphi^{c,\tau}(f_{c, \tau})\right)^-\right\|_{L^2}^2 
    &\leq \left\|\left(\varphi^{c,\tau}(f_{c, \tau})\right)^-\right\|_c^2 \\
    &\leq \tau^{\zeta} \left\|\varphi^{c,\tau}(f_{c, \tau})\right\|_{L^{3(p-1)/2}}^{p-1} \left\|\left(\varphi^{c,\tau}(f_{c, \tau})\right)^-\right\|_{L^3} \\
    &\leq \tau^{3/2} \left\|(-\Delta)^{1/4}\left(\varphi^{c,\tau}(f_{c, \tau})\right)^-\right\|_{L^2},
    \end{aligned}
    \end{equation*}
    and therefore
    \[
    \left\|(-\Delta)^{1/4}\left(\varphi^{c,\tau}(f_{c, \tau})\right)^-\right\|_{L^2}^2 \leq \tau^3 \cdot \mathcal{O}\left(\frac{1}{c^2}\right).
    \]
    \end{proof}

    \begin{lemma}\label{stb_7}
      The following estimates hold:
      \begin{align}
      \int_{\mathbb{R}^3} \left(|f_{c,\tau}|^{p-2} + |f_{\tau}|^{p-2} \right) \left| f_{c,\tau} - f_\tau \right|^2 &\leq \frac{C \tau^{\frac{3p-2}{2}}}{c^2}, \label{stb_5} \\
      \int_{\mathbb{R}^3} \left(|f_{c,\tau}|^{p-2} + |\varphi^{c,\tau}(f_{c, \tau})|^{p-2} \right) \left| f_{c,\tau} - \varphi^{c,\tau}(f_{c, \tau}) \right|^2 &\leq \frac{C \tau^{2}}{c^2}. \label{stb_6}
      \end{align}
      \end{lemma}
      
      \begin{proof}
      By the Hölder and Sobolev inequalities, we have
      \begin{equation*}
      \begin{aligned}
      I_1 &:= \int_{\mathbb{R}^3} |f_{c,\tau}|^{p-2} \left| f_{c,\tau} - f_\tau \right|^2 \\
      &\leq \|f_{c, \tau}\|_{L^p}^{p-2} \|f_{c,\tau} - f_\tau\|_{L^p}^2 \\
      &\leq C \left\|(-\Delta)^{\frac{3}{4} - \frac{3}{2p}} f_{c,\tau}\right\|_{L^2}^{p-2} \left\|(-\Delta)^{\frac{3}{4} - \frac{3}{2p}} (f_{c,\tau} - f_\tau)\right\|_{L^2}^2 \\
      &= C \left\|(-\Delta)^{\frac{3}{4} - \frac{3}{2p}} f_{\tau}\right\|_{L^2}^{p-2} \left\||\xi|^{\frac{3}{2} - \frac{3}{p}} (\mathbf{U}^{-1} - I_4) \widehat{f_\tau} \right\|_{L^2}^2 \\
      &= C \tau^{\frac{3(p-2)}{2p}} \left\|(-\Delta)^{\frac{3}{4} - \frac{3}{2p}} f\right\|_{L^2}^{p-2} \left\||\xi|^{\frac{3}{2} - \frac{3}{p}} (\mathbf{U}^{-1} - I_4) \widehat{f_\tau} \right\|_{L^2}^2.
      \end{aligned}
      \end{equation*}
      Using the estimate
      \[
      |\mathbf{U}^{-1}(\xi) - I_4|^2 \leq 2|1 - \Upsilon_+|^2 + 2|\Upsilon_-|^2 \leq \frac{C|\xi|^2}{c^2},
      \]
      we obtain
      \[
      I_1 \leq \frac{C}{c^2} \tau^{\frac{3(p-2)}{2p}} \left\||\xi|^{\frac{5}{2} - \frac{3}{p}} \widehat{f_\tau} \right\|_{L^2}^2 \leq \frac{C \tau^{\frac{3p-2}{2}}}{c^2},
      \]
      which implies that inequality~\eqref{stb_5} holds.
      
      Similarly, by Proposition~\ref{prop:3.2} (2), we have
      \begin{equation}\label{stb_2}
      \begin{aligned}
      &\int_{\mathbb{R}^3} |\varphi^{c,\tau}(f_{c, \tau})|^{p-2} \left| f_{c,\tau} - \varphi^{c,\tau}(f_{c, \tau}) \right|^2 \\
      &\quad\leq \|\varphi^{c,\tau}(f_{c, \tau})\|_{L^p}^{p-2} \| f_{c,\tau} - \varphi^{c,\tau}(f_{c, \tau}) \|_{L^p}^2 \\
      &\quad\leq \|f_{c,\tau}\|_{L^p}^{p-2} \left\|(-\Delta)^{\frac{3}{4} - \frac{3}{2p}} (f_{c,\tau} - \varphi^{c,\tau}(f_{c, \tau}))\right\|_{L^2}^2 \\
      &\quad\leq C \tau^{\frac{3(p-2)}{2p}} \left\|(-\Delta)^{\frac{3}{4} - \frac{3}{2p}} (f_{c,\tau} - \varphi^{c,\tau}(f_{c, \tau}))\right\|_{L^2}^2.
      \end{aligned}
      \end{equation}
      Combining~\eqref{youyong}, Lemma~\ref{stb_1}, and the interpolation inequality, we get
      \begin{equation}\label{stb_3}
      \begin{aligned}
      &\left\|(-\Delta)^{\frac{3}{4} - \frac{3}{2p}} \left(\varphi^{c,\tau}(f_{c, \tau})\right)^-\right\|_{L^2}^2 \\
      &\quad\leq \left\|(-\Delta)^{1/4} \left(\varphi^{c,\tau}(f_{c, \tau})\right)^-\right\|_{L^2}^{6 - 12/p} \left\|\left(\varphi^{c,\tau}(f_{c, \tau})\right)^-\right\|_{L^2}^{12/p - 4} \\
      &\quad\leq \frac{C \tau^2}{c^2},
      \end{aligned}
      \end{equation}
      and
      \begin{equation}\label{stb_4}
      \begin{aligned}
      &\left\|(-\Delta)^{\frac{3}{4} - \frac{3}{2p}} \left(f_{c,\tau} - \left(\varphi^{c,\tau}(f_{c, \tau})\right)^+\right)\right\|_{L^2}^2 \\
      &\quad= \left(1 - \sqrt{1 - \left\| \left(\varphi^{c,\tau}(f_{c, \tau})\right)^- \right\|_{L^2}^2} \right)^2 \left\|(-\Delta)^{\frac{3}{4} - \frac{3}{2p}} f_{\tau}\right\|_{L^2}^2 \\
      &\quad\leq C \tau^{\frac{3p-6}{p}} \left\| \left(\varphi^{c,\tau}(f_{c, \tau})\right)^- \right\|_{L^2}^4 \\
      &\quad\leq \frac{C \tau^{2}}{c^2}.
      \end{aligned}
      \end{equation}
      Putting together~\eqref{stb_2}, \eqref{stb_3}, and \eqref{stb_4}, we conclude that
      \[
      \int_{\mathbb{R}^3} |\varphi^{c,\tau}(f_{c, \tau})|^{p-2} \left| f_{c,\tau} - \varphi^{c,\tau}(f_{c, \tau}) \right|^2 \leq \frac{C \tau^{2}}{c^2},
      \]
      which proves inequality~\eqref{stb_6}.
      \end{proof}

      \begin{lemma}\label{new_b}
        For every $\tau \in (0,1]$, the following inequality holds:
        \[
        \mathcal{I}^{\infty,\tau}(f_\tau) \leq \mathcal{I}^{\infty,\tau}(f_{c,\tau}) \leq \mathcal{I}^{\infty,\tau}(f_\tau) + \frac{C\tau^2}{c^2}.
        \]
        \end{lemma}
        
        \begin{proof}
        It is sufficient to show that
        \[
        \mathcal{I}^{\infty,\tau}(f_{c,\tau}) \leq \mathcal{I}^{\infty,\tau}(f_\tau) + \frac{C\tau^2}{c^2}.
        \]
        By the triangle inequality, we decompose it as
        \begin{equation}\label{llp_0}
        \begin{aligned}
        \mathcal{I}^{\infty,\tau}(f_{c,\tau}) - \mathcal{I}^{\infty,\tau}(f_\tau) 
        &\leq \left| \mathcal{I}^{\infty,\tau}(f_{c,\tau}) - \mathcal{I}^{\infty,\tau}(f_\tau) - d\mathcal{I}^{\infty,\tau}(f_\tau)[f_{c,\tau} - f_\tau] \right| \\
        &\quad + \left| d\mathcal{I}^{\infty,\tau}(f_\tau)[f_{c,\tau} - f_\tau] \right|.
        \end{aligned}
        \end{equation}
        
        Note that the first-order term satisfies
        \begin{equation*}
        \begin{aligned}
        d\mathcal{I}^{\infty,\tau}(f_\tau)[f_{c,\tau} - f_\tau] 
        &= 2\omega_{\infty,\tau} \left( f_\tau,\ f_{c,\tau} - f_\tau \right)_{L^2} \\
        &= 2\omega_{\infty,\tau} \left( \widehat{f_\tau},\ (\mathbf{U}^{-1} - I_4) \widehat{f_\tau} \right)_{L^2} \\
        &= 2\omega_{\infty,\tau} \left( \widehat{f_\tau},\ (\Upsilon_+ - 1) \widehat{f_\tau} \right)_{L^2} - 2\omega_{\infty,\tau} \left( \widehat{f_\tau},\ \Upsilon_- \beta \frac{\alpha \cdot \xi}{|\xi|} \widehat{f_\tau} \right)_{L^2} \\
        &= 2\omega_{\infty,\tau} \left( \widehat{f_\tau},\ (\Upsilon_+ - 1) \widehat{f_\tau} \right)_{L^2},
        \end{aligned}
        \end{equation*}
        where the last equality due to the condition $P_\infty^- f_\tau = 0$. A direct computation shows
        \[
        1 - \Upsilon_+ = \frac{1 - \Upsilon_+^2}{1 + \Upsilon_+} \leq \frac{\sqrt{c^2|\xi|^2 + m^2c^4} - mc^2}{2\sqrt{c^2|\xi|^2 + m^2c^4}} \leq \frac{|\xi|^2}{4m^2c^2},
        \]
        which implies
        \begin{equation}\label{llp_1}
        \left| d\mathcal{I}^{\infty,\tau}(f_\tau)[f_{c,\tau} - f_\tau] \right| \leq \frac{C \| \nabla f_\tau \|_{L^2}^2}{c^2} \leq \frac{C\tau^2}{c^2}.
        \end{equation}
        On the other hand, using inequality \eqref{stb_5} and the estimate
        \[
        \left| |a|^p - |b|^p - p|b|^{p-2} \Re \langle b,\ a - b \rangle \right| \leq C_p \max\left\{ |a|^{p-2},\ |b|^{p-2} \right\} |a - b|^2,
        \]
        we obtain
        \begin{equation}\label{new_b1}
        \begin{aligned}
        &\mathcal{I}^{\infty,\tau}(f_{c,\tau}) - \mathcal{I}^{\infty,\tau}(f_\tau) - d\mathcal{I}^{\infty,\tau}(f_\tau)[f_{c,\tau} - f_\tau] \\
        &= \frac{1}{2m} \left( \int_{\mathbb{R}^3} |\nabla f_{c,\tau}|^2 - \int_{\mathbb{R}^3} |\nabla f_\tau|^2 - 2\Re \int_{\mathbb{R}^3} \langle \nabla f_\tau,\ \nabla f_{c,\tau} - \nabla f_\tau \rangle \right) \\
        &\quad - \frac{2\tau^\zeta}{p} \int_{\mathbb{R}^3} \left( |f_{c,\tau}|^p - |f_\tau|^p - p|f_\tau|^{p-2} \Re \langle f_\tau,\ f_{c,\tau} - f_\tau \rangle \right) \\
        &\leq C \| \nabla(f_{c,\tau} - f_\tau) \|_{L^2}^2 + \frac{C\tau^2}{c^2}.
        \end{aligned}
        \end{equation}
        From the estimate
        \[
        |\mathbf{U}^{-1}(\xi) - I_4|^2 \leq 2|1 - \Upsilon_+|^2 + 2|\Upsilon_-|^2 \leq \frac{C|\xi|^2}{c^2},
        \]
        it follows that
        \begin{equation*}
        \begin{aligned}
        \| \nabla(f_{c,\tau} - f_\tau) \|_{L^2}^2 &= \left\| \xi (\mathbf{U}^{-1} - I_4) \widehat{f_\tau} \right\|_{L^2}^2 \leq \frac{C}{c^2} \| \Delta f_\tau \|_{L^2}^2 \leq \frac{C\tau^2}{c^2}.
        \end{aligned}
        \end{equation*}
        Therefore,
        \begin{equation}\label{llp_2}
        \left| \mathcal{I}^{\infty,\tau}(f_{c,\tau}) - \mathcal{I}^{\infty,\tau}(f_\tau) - d\mathcal{I}^{\infty,\tau}(f_\tau)[f_{c,\tau} - f_\tau] \right| \leq \frac{C\tau^2}{c^2}.
        \end{equation}
        Combining inequalities \eqref{llp_0}, \eqref{llp_1}, and \eqref{llp_2}, we complete the proof of the lemma.
        \end{proof}

        \begin{lemma}\label{stb_8}
          The Lagrange multiplier $\omega$ satisfies the inequality
          \[
          \omega < mc^2.
          \]
          \end{lemma}
          
          \begin{proof}
            It easy to see that
          \begin{equation*}
          \begin{aligned}
          \omega &= \frac{1}{2} d\mathcal{I}^{c,\tau}\left(\varphi^{c,\tau}(f_{c,\tau})\right)\left[\varphi^{c,\tau}(f_{c,\tau})\right] \\
          &= \left\|\varphi^{c,\tau}(f_{c,\tau})\right\|_c^2 - \tau^{\zeta} \int_{\mathbb{R}^3} \left|\varphi^{c,\tau}(f_{c,\tau})\right|^p.
          \end{aligned}
          \end{equation*}          
          Using \eqref{youyong}, Lemma~\ref{AA_1}, and the asymptotic expansion
          \[
          \sqrt{-c^2\Delta + m^2c^4} - mc^2 = -\frac{\Delta}{2m} + \Delta^2 \cdot \mathcal{O}\left(\frac{1}{c^2}\right),
          \]
          we get
          \begin{equation}
          \begin{aligned}
          \omega - mc^2 &\leq \left\|\varphi^{c,\tau}(f_{c,\tau})\right\|_c^2 - mc^2 \left\|\varphi^{c,\tau}(f_{c,\tau})\right\|_{L^2}^2 - \frac{2\tau^\zeta}{p} \int_{\mathbb{R}^3} \left|\varphi^{c,\tau}(f_{c,\tau})\right|^p \\
          &= e^\infty(\tau) + o_c(1)\tau^2 \\
          &= \tau^2 \left(e^{\infty}(1) + o_c(1)\right) \\
          &< 0.
          \end{aligned}
          \end{equation}
          This completes the proof.
          \end{proof}

          \begin{lemma}\label{stb_12}
            For every $\tau \in (0,1]$, the following inequality holds:
            \[
            \mathcal{I}^{c,\tau}(f_{c,\tau}) \leq \mathcal{I}^{c,\tau}(\varphi^{c,\tau}(f_{c, \tau})) \leq \mathcal{I}^{c,\tau}(f_{c,\tau}) + \frac{C\tau^2}{c^2}.
            \]
            \end{lemma}
            
            \begin{proof}
            The argument follows a structure similar to the proof of Lemma~\ref{new_b}. 
            It suffices to estimate
            \[
            \mathcal{I}^{c,\tau}(f_{c,\tau}) - \mathcal{I}^{c,\tau}(\varphi^{c,\tau}(f_{c, \tau})) - d\mathcal{I}^{c,\tau}(\varphi^{c,\tau}(f_{c, \tau}))[f_{c,\tau} - \varphi^{c,\tau}(f_{c, \tau})]
            \]
            and
            \[
            d\mathcal{I}^{c,\tau}(\varphi^{c,\tau}(f_{c, \tau}))[f_{c,\tau} - \varphi^{c,\tau}(f_{c, \tau})].
            \]            
            From Proposition~\ref{prop:3.2} (2) and Lemma~\ref{stb_8}, we obtain
            \begin{equation}\label{stb_11}
            \begin{aligned}
            &d\mathcal{I}^{c,\tau}(\varphi^{c,\tau}(f_{c, \tau}))[\varphi^{c,\tau}(f_{c, \tau}) - f_{c,\tau}] \\
            &= 2\omega \left( \varphi^{c,\tau}(f_{c, \tau}),\ \varphi^{c,\tau}(f_{c, \tau}) - f_{c,\tau} \right)_{L^2} \\
            &= 2\omega \left( \left[\varphi^{c,\tau}(f_{c, \tau})\right]^+,\ \left[\varphi^{c,\tau}(f_{c, \tau})\right]^+ - f_{c,\tau} \right)_{L^2} + 2\omega \left\| \left[\varphi^{c,\tau}(f_{c, \tau})\right]^- \right\|_{L^2}^2 \\
            &= -2\omega \left( \left[\varphi^{c,\tau}(f_{c, \tau})\right]^+,\ \left(1 - \sqrt{1 - \left\| \left[\varphi^{c,\tau}(f_{c, \tau})\right]^- \right\|_{L^2}^2} \right) f_{c,\tau} \right)_{L^2} + 2\omega \left\| \left[\varphi^{c,\tau}(f_{c, \tau})\right]^- \right\|_{L^2}^2 \\
            &\leq -{\omega \left\| \left[\varphi^{c,\tau}(f_{c, \tau})\right]^- \right\|_{L^2}^2 \left\| \left[\varphi^{c,\tau}(f_{c, \tau})\right]^+ \right\|_{L^2}^2} + 2\omega \left\| \left[\varphi^{c,\tau}(f_{c, \tau})\right]^- \right\|_{L^2}^2 
            \\
            &=\omega\left\| \left[\varphi^{c,\tau}(f_{c, \tau})\right]^- \right\|_{L^2}^2+
            \omega\left\| \left[\varphi^{c,\tau}(f_{c, \tau})\right]^- \right\|_{L^2}^4
            \\
            &\leq m c^2 \left\| \left[\varphi^{c,\tau}(f_{c, \tau})\right]^- \right\|_{L^2}^2 + \frac{C\tau^2}{c^2}.
            \end{aligned}
            \end{equation}            
            Following the same reasoning as in the proof of~\eqref{new_b1}, and using inequality~\eqref{stb_6}, we deduce
            \begin{equation}\label{stb_10}
            \begin{aligned}
            &\mathcal{I}^{c,\tau}(\varphi^{c,\tau}(f_{c, \tau})) - \mathcal{I}^{c,\tau}(f_{c,\tau}) - d\mathcal{I}^{c,\tau}(\varphi^{c,\tau}(f_{c, \tau}))[\varphi^{c,\tau}(f_{c, \tau}) - f_{c,\tau}] \\
            &\leq \left\| \left[\varphi^{c,\tau}(f_{c, \tau})\right]^+ - f_{c,\tau} \right\|_c^2 - \left\| \left[\varphi^{c,\tau}(f_{c, \tau})\right]^- \right\|_c^2 + \frac{C\tau^2}{c^2}.
            \end{aligned}
            \end{equation}            
            The desired result follows by combining inequalities~\eqref{stb_10} and~\eqref{stb_11}.
            \end{proof}

\begin{lemma}\label{pseudo}
    There exist constants $c_0, C>0$, such that for $c>c_0$, there holds $$e^{Pseudo,c}(\tau)\geq mc^2-C.$$
\end{lemma}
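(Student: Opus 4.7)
The plan is to leverage the modified Gagliardo--Nirenberg inequality stated earlier (the lemma labeled as the ``modified'' Gagliardo--Nirenberg) together with the trivial bound $\|u\|_c^2 \geq mc^2\|u\|_{L^2}^2$ that holds for every element of $E_c$. For $u \in \mathcal{O}_c$ we have $\|u\|_{L^2}=1$, so $Q:=\|u\|_c^2 - mc^2 \geq 0$, and the goal reduces to bounding $\tau^{\zeta} A[u]$ from above by $Q + C$ uniformly in $u \in \mathcal{O}_c$.

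First, I would apply the modified Gagliardo--Nirenberg inequality (in the form valid on $\mathcal{O}_c$) to obtain
\begin{equation*}
A[u] = \frac{2}{p}\|u\|_{L^p}^p \leq C_1\bigl(Q^{(3p-6)/4} + c^{-1/2}Q\bigr),
\end{equation*}
for some constant $C_1$ depending only on $p$. Substituting this into the functional gives
\begin{equation*}
\mathcal{I}^{Pseudo,c,\tau}(u) \;\geq\; mc^2 + Q - \tau^{\zeta}C_1 Q^{(3p-6)/4} - \tau^{\zeta}C_1\, c^{-1/2} Q.
\end{equation*}
Choosing $c_0$ so large that $\tau^{\zeta}C_1 c^{-1/2} \leq \tfrac{1}{2}$ whenever $c > c_0$ absorbs the linear-in-$Q$ error term and leaves
\begin{equation*}
\mathcal{I}^{Pseudo,c,\tau}(u) \;\geq\; mc^2 + \frac{Q}{2} - \tau^{\zeta}C_1 Q^{(3p-6)/4}.
\end{equation*}

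Since $p \in (2,3)$ forces the exponent $\frac{3p-6}{4} \in (0,\tfrac{3}{4}) \subset (0,1)$, the single-variable function $Q \mapsto \frac{Q}{2} - \tau^{\zeta}C_1 Q^{(3p-6)/4}$ is sublinear in its negative term and therefore attains a finite global minimum on $[0,\infty)$; an elementary calculation (set the derivative to zero, recovering $Q^* = \bigl(\tau^{\zeta}C_1 (3p-6)/4\bigr)^{4/(10-3p)}$) shows this minimum equals some constant $-C$ depending only on $p$ and $\tau$ but independent of $c$. Combining everything yields $\mathcal{I}^{Pseudo,c,\tau}(u) \geq mc^2 - C$ uniformly for $u \in \mathcal{O}_c$, and taking the infimum gives $e_c^{Pseudo}(\tau) \geq mc^2 - C$, as required.

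The only subtle point is that the inequality produced by the modified Gagliardo--Nirenberg bound contains two terms with different scalings in $Q$, and one must confirm the linear-in-$Q$ term can be absorbed by the plain $+Q$ coming from the definition of $Q$; this forces the smallness condition on $c^{-1/2}$ and hence the choice of $c_0$. The sublinear term $Q^{(3p-6)/4}$ is then harmless precisely because the exponent is strictly below $1$ throughout the subcritical range $p\in(2,3)$, which is exactly what makes a $c$-uniform lower bound possible despite the $L^2$-supercritical regime.
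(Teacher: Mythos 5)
Your proof is correct and follows essentially the same route as the paper: apply the modified Gagliardo--Nirenberg inequality (Lemma~\ref{modified}) on $\mathcal{O}_c$, absorb the $c^{-1/2}Q$ error into the $Q$ coming from $\|u\|_c^2 - mc^2$ by taking $c$ large, and observe that $Q \mapsto \frac{1}{2}Q - C_1 Q^{(3p-6)/4}$ is bounded below because the exponent is strictly less than $1$. You have merely made explicit the elementary minimization that the paper leaves implicit in its final ``$\geqslant mc^2 - C$'' step. One small imprecision: you invoke $p\in(2,3)$ to conclude $(3p-6)/4 < 1$, but Lemma~\ref{modified} as stated in the paper only covers $p\in(8/3,3)$; in that range the exponent lies in $(1/2,3/4)$, which is what the argument actually uses, and Section~\ref{section3} of the paper is implicitly working in this $L^2$-supercritical range anyway.
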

\begin{proof}
    For $u\in \mathcal{O}_c$, by Lemma \ref{modified}, we have
\begin{equation*}
    \begin{split}
        \mathcal{I}^{Pseudo,c, \tau}(u)= & \| u\|_{c}^2 - \tau^{\zeta} A[u]\\
         \geqslant& (1- Cc^{-1/2}) (\| u\|_{c}^2- mc^2\|u\|_{L^2}^2 )
        \\
        &-C \left(\|u\|_c^2 - mc^2\|u\|_{L^2}^2 \right)^{\frac{3p-6}{4}}
        + mc^2\\
        \geqslant& mc^2 - C.
    \end{split}
\end{equation*}
\end{proof}

\begin{proof}[\textbf{Proof of Proposition  \ref{mulp}}]
  By combining Lemma \ref{lemm:3.2} and Lemma \ref{pseudo}, we obtain
  \[
  e^c(\tau) \geq e^{{Pseudo},c}(\tau) \geq m c^2 - C.
  \]  
  Then, applying Lemma \ref{stb_12}, Lemma \ref{new_b}, and the operator inequality
  \[
  \sqrt{-c^2\Delta + m^2 c^4} \leq -\frac{\Delta}{2m} + m c^2,
  \]
  we derive
  \begin{equation}\label{0q}
  \begin{split}
  e^c(\tau) 
  &\leq \mathcal{I}^{c,\tau}(\varphi^{c,\tau}(f_{c,\tau})) 
  \leq \mathcal{I}^{c,\tau}(f_{c,\tau}) + \frac{C \tau^2}{c^2} \\
  &\leq \mathcal{I}^{\infty,\tau}(f_{c,\tau}) + m c^2 + \frac{C \tau^2}{c^2} \\
  &\leq \mathcal{I}^{\infty,\tau}(f_{\tau}) + m c^2 + \frac{C \tau^2}{c^2} \\
  &= e^\infty(\tau) + m c^2 + \frac{C \tau^2}{c^2}.
  \end{split}
  \end{equation}
  This ends the proof.
  \end{proof}

\subsection{Minimization problem}\label{4.2}
In this section, we  employ the upper bound estimate for $e^c(\tau)$ in Proposition \ref{mulp} to prove the existence of a minimizer to the variational problem $$
e^c(\tau)=\inf _{w \in \mathcal{O}_c^{+}} \mathcal{I }_{red}^{c,\tau}  (w).
$$
It follows from Lemma \ref{AA_1} that \( e^{\infty}(\tau) = \tau^2 e^{\infty}(1) < 0 \). Combining this with Proposition \ref{mulp}, we obtain the following result.

\begin{lemma}\label{as_stb}
There exist constants \( c_0, C > 0 \) such that for all \( c > c_0 \) and every \( \tau \in (0,1] \), 
\[
e^c(\tau) \in (0, m c^2).
\]
\end{lemma}

\begin{lemma}\label{lemm_3.88}
  For any $0 < \tau_1 < \tau_2$, we have $e^c(\tau_2) < e^c(\tau_1)$.
\end{lemma}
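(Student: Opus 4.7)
The plan is to promote the obvious pointwise monotonicity $\mathcal{E}^{c,\tau_2}(w)\le\mathcal{E}^{c,\tau_1}(w)$ into a strict \emph{uniform} gap along a minimizing sequence, so that the infima also separate. The starting point is the quantitative identity obtained by using $\varphi^{c,\tau_2}(w)\in S_W$ as a competitor in the supremum defining $\mathcal{E}^{c,\tau_1}(w)$:
\[
\mathcal{E}^{c,\tau_2}(w)=\mathcal{I}^{c,\tau_1}(\varphi^{c,\tau_2}(w))-(\tau_2^\zeta-\tau_1^\zeta)A[\varphi^{c,\tau_2}(w)]\le \mathcal{E}^{c,\tau_1}(w)-(\tau_2^\zeta-\tau_1^\zeta)A[\varphi^{c,\tau_2}(w)].
\]
The task therefore reduces to producing a uniform positive lower bound $A[\varphi^{c,\tau_2}(w_n)]\ge\delta_0>0$ along some minimizing sequence $\{w_n\}\subset\mathcal{O}_c^+$ for $\mathcal{E}^{c,\tau_1}$.

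I would first show $A[w_n]$ is bounded away from zero. Since $w_n\in E_c^+$ is itself a competitor in $S_{W_n}$, one has $\mathcal{I}^{c,\tau_2}(w_n)=\|w_n\|_c^2-\tau_2^\zeta A[w_n]\le\mathcal{E}^{c,\tau_2}(w_n)\le\mathcal{E}^{c,\tau_1}(w_n)=e_c(\tau_1)+o(1)$. Using $\|w_n\|_c^2\ge mc^2$, the upper bound $e_c(\tau_1)\le mc^2+e_\infty(\tau_1)+C\tau_1^2 c^{-1/2}$ from Lemma \ref{mulp}, and the strict negativity $e_\infty(\tau_1)=\tau_1^2 e_\infty(1)<0$, I obtain $\tau_2^\zeta A[w_n]\ge |e_\infty(\tau_1)|-o(1)$ for $c$ and $n$ large.

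Next I would transfer this bound to $u_n:=\varphi^{c,\tau_2}(w_n)$. Write $u_n=a_n w_n+\eta_n$ with $\eta_n:=u_n^-\in E_c^-$ and $a_n=\sqrt{1-\|\eta_n\|_{L^2}^2}$. Proposition \ref{prop:3.2}(ii) gives $\|\eta_n\|_c^2\le\tau_2^\zeta A[w_n]\le C$, hence $\|\eta_n\|_{L^2}^2\le C/(mc^2)=o(1)$ and $a_n\to 1$. Applying Lemma \ref{loc_ineq} to $\eta_n$ (admissible since $\|\eta_n\|_c<c^s$ for $c$ large) gives
\[
\|\eta_n\|_{L^p}^p\le Cc^{-1/2}\|\eta_n\|_{L^2}^{6-2p}\|\eta_n\|_c^2=O(c^{2p-13/2})=o(1),
\]
because $p<3$. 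An elementary $L^p$-expansion together with the boundedness of $\|u_n\|_{L^p}$ and $\|w_n\|_{L^p}$ then yields $A[u_n]=a_n^p A[w_n]+o(1)=A[w_n]+o(1)$, so $A[u_n]\ge\tfrac12|e_\infty(\tau_1)|/\tau_2^\zeta=:\delta_0>0$ for large $c,n$.

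Plugging $\delta_0$ into the pointwise inequality of the first paragraph and passing $n\to\infty$,
\[
e_c(\tau_2)\le\limsup_n\mathcal{E}^{c,\tau_2}(w_n)\le e_c(\tau_1)-(\tau_2^\zeta-\tau_1^\zeta)\delta_0<e_c(\tau_1),
\]
which is the desired strict inequality. The main obstacle is the uniform $L^p$-smallness of $\eta_n$ in the third step: it is essential there that $c$ be large and that the $L^2$-supercritical refinement in Lemma \ref{loc_ineq} be exploited through the smallness of $\|\eta_n\|_{L^2}$ together with the open-set restriction $\eta_n\in\mathscr{O}_c$, which is precisely why the constructions of Section \ref{section3} were set up on $\mathcal{O}_c^+$ rather than on all of $\mathcal{S}^{c,+}$.
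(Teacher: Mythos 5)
Your proof is correct and takes a genuinely different (but closely related) route. The paper argues by contradiction: assuming $e_c(\tau_1)=e_c(\tau_2)$, it deduces $A[\varphi^{c,\tau_2}(w_n)]\to 0$ along a minimizing sequence, then propagates this to $A[w_n]\to 0$ via the $L^p$-boundedness of $P_c^+$ (Lemma \ref{com}) and Proposition \ref{prop:3.2}(i), and finally obtains $e_c(\tau_1)\geq\|\varphi^{c,\tau_1}(w_n)^+\|_c^2+o(1)\geq mc^2+o(1)$, contradicting the upper bound in Lemma \ref{mulp}. You instead establish a quantitative \emph{lower} bound $A[\varphi^{c,\tau_2}(w_n)]\geq\delta_0>0$ directly, by first bounding $A[w_n]$ from below using the same Lemma \ref{mulp} upper bound together with $\|w_n\|_c^2\geq mc^2$ and $e_\infty(\tau_1)<0$, and then transferring this bound to $\varphi^{c,\tau_2}(w_n)$ via the explicit decomposition $u_n=a_n w_n+\eta_n$ and the $L^p$-smallness of $\eta_n=\varphi^{c,\tau_2}(w_n)^-$ from Proposition \ref{prop:3.2}(ii) and Lemma \ref{loc_ineq}. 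Both proofs ultimately exploit the same facts---that $e_c(\tau)$ lies strictly below $mc^2$ while $\|w_n\|_c^2\geq mc^2$ forces the nonlinear term to be bounded away from zero---but your direct version has the small advantage of yielding an explicit quantitative gap $e_c(\tau_1)-e_c(\tau_2)\geq(\tau_2^\zeta-\tau_1^\zeta)\delta_0$. One point worth cleaning up in your write-up: several of your $o(1)$'s (e.g.\ $\|\eta_n\|_{L^2}^2\leq C/(mc^2)=o(1)$, $a_n\to 1$, $\|\eta_n\|_{L^p}^p=O(c^{2p-13/2})=o(1)$) are in the regime $c\to\infty$, whereas in the lemma $c>c_0$ is \emph{fixed}; one should say instead that for $c$ chosen large these quantities are uniformly (in $n$) bounded by small constants, which is all the argument actually needs, and is exactly what the setup with $c>c_0$ provides.
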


\begin{proof}
It is clear that $e^c(\tau_2) \leq e^c(\tau_1)$. Suppose, by contradiction, that $e^c(\tau_2) = e^c(\tau_1)$. Then for any $w \in \mathcal{O}_c^+$,
\begin{equation*}
  \begin{split}
    e^c(\tau_2) &\leq \mathcal{I}^{c, \tau_2}(\varphi^{c,\tau_2}(w)) \\
    &= \| \varphi^{c,\tau_2}(w)^+\|_c^2 - \| \varphi^{c,\tau_2}(w)^-\|_c^2 - \tau_1^{\zeta} A[\varphi^{c,\tau_2}(w)] - (\tau_2^{\zeta} - \tau_1^{\zeta}) A[\varphi^{c,\tau_2}(w)] \\
    &\leq \mathcal{I}^{c, \tau_1}\left(\varphi^{c, \tau_1}P(\varphi^{c,\tau_2}(w)^+)
\right) - (\tau_2^{\zeta} - \tau_1^{\zeta}) A[\varphi^{c,\tau_2}(w)] \\
    &= \mathcal{I}^{c, \tau_1}(\varphi^{c, \tau_1}(w)) - (\tau_2^{\zeta} - \tau_1^{\zeta}) A[\varphi^{c,\tau_2}(w)].
  \end{split}
\end{equation*}
Let $\{w_n\} \subset \mathcal{O}_{c}^+$ be a minimizing sequence for the reduced functional $\mathcal{I}_{\mathrm{red}}^{c,\tau_1}(w) = \mathcal{I}^{c, \tau_1}(\varphi^{c, \tau_1}(w))$, so that
\[
e^c(\tau_1) = \mathcal{I}^{c, \tau_1}(\varphi^{c, \tau_1}(w_n)) + o_n(1).
\]
Then
\[
e^c(\tau_2) \leq e^c(\tau_1) - (\tau_2^{\zeta} - \tau_1^{\zeta}) A[\varphi^{c,\tau_2}(w_n)] + o_n(1),
\]
which implies
\[
A[\varphi^{c,\tau_2}(w_n)] = o_n(1).
\]
By Lemma \ref{com}, we also have
\begin{equation}\label{nq}
    A[\varphi^{c,\tau_2}(w_n)^+] = o_n(1).
\end{equation}
Combining Proposition~\ref{prop:3.2} (1) and (2) with \eqref{nq}, we obtain
\[
A(w_n) = o_n(1), \quad \|\varphi^{c,\tau_1}(w_n)^-\|_{c}^2 = o_n(1).
\]
From Lemma \ref{as_stb},
\begin{equation*}
  \begin{split}
    m c^2 - C &\geq e^c(\tau_1) = \mathcal{I}^{c, \tau_1}(\varphi^{c, \tau_1}(w_n)) + o_n(1) \\
    &= \|\varphi^{c, \tau_1}(w_n)^+\|_c^2 + o_n(1) \geq m c^2 + o_n(1),
  \end{split}
\end{equation*}
a contradiction.
\end{proof}

For $\tau \in (0,1)$, define
\[
E^c(\tau) = \tau^{\theta} e^c(\tau), \quad \text{where } \theta = \frac{3\zeta}{2 - \zeta}.
\]
Then Lemma~\ref{lemm_3.88} implies the strict subadditivity of $E_c(\tau^{1/\theta})$.

\begin{lemma}\label{lemm:3.8}
 For any $\tau_1, \tau_2 \in (0, 1)$, we have
 \[
 E^c((\tau_1 + \tau_2)^{1/\theta}) < E^c(\tau_1^{1/\theta}) + E^c(\tau_2^{1/\theta}).
 \]
\end{lemma}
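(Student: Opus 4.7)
The plan is to reduce the claimed strict subadditivity directly to the strict monotonicity of $\tau\mapsto e_c(\tau)$ established in Lemma~\ref{lemm_3.88}. Setting $\sigma:=(\tau_1+\tau_2)^{1/\theta}$ and $\mu_i:=\tau_i^{1/\theta}$, and unfolding the definition $E_c(s)=s^{\theta}e_c(s)$, we see that $E_c(\sigma)=(\tau_1+\tau_2)\,e_c(\sigma)$ and $E_c(\mu_i)=\tau_i\,e_c(\mu_i)$, so the asserted inequality is equivalent to
$$(\tau_1+\tau_2)\,e_c(\sigma)\;<\;\tau_1\,e_c(\mu_1)+\tau_2\,e_c(\mu_2).$$

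First I would verify that $\theta>0$, so that raising to the $1/\theta$ power preserves ordering: for $p\in(8/3,3)$ one has $\zeta=5-3p/2\in(1/2,1)$, hence $\theta=3\zeta/(2-\zeta)\in(1,3)$, in particular $\theta>0$. Consequently $\sigma^{\theta}=\tau_1+\tau_2>\tau_i=\mu_i^{\theta}$ for $i=1,2$, so $\sigma>\mu_i$ for both $i$.

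Next, I would apply Lemma~\ref{lemm_3.88} to the pair $\mu_i<\sigma$ (legitimate since $\tau_1,\tau_2\in(0,1)$ ensures $\mu_i>0$), obtaining the strict inequality $e_c(\sigma)<e_c(\mu_i)$. Multiplying by $\tau_i>0$ and summing over $i=1,2$ gives
$$(\tau_1+\tau_2)\,e_c(\sigma)\;=\;\tau_1 e_c(\sigma)+\tau_2 e_c(\sigma)\;<\;\tau_1 e_c(\mu_1)+\tau_2 e_c(\mu_2),$$
which is exactly the desired inequality.

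The reason this is so short is that the weight $\tau^{\theta}$ built into the definition of $E_c$ already encodes the homogeneity that would otherwise need to be extracted by a scaling argument on a near-minimizer; no scaling construction or appeal to the energy estimates in Lemma~\ref{mulp} is required. The sign of $e_c(\tau)$ (which is positive, of order $mc^2$) plays no role here either, in contrast with the classical Lions-type strict subadditivity where negativity of the infimum is essential. Accordingly I do not foresee a real obstacle in this argument: the genuine work was already carried out in Lemma~\ref{lemm_3.88}, and the present lemma is essentially its reformulation on the rescaled functional $E_c$.
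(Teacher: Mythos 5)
Your proof is correct and follows essentially the same argument as the paper's: both rewrite $E_c((\tau_1+\tau_2)^{1/\theta})=\tau_1\,e_c((\tau_1+\tau_2)^{1/\theta})+\tau_2\,e_c((\tau_1+\tau_2)^{1/\theta})$ and then apply the strict monotonicity of $e_c$ from Lemma~\ref{lemm_3.88} to each summand. Your explicit check that $\theta>0$ (so that $(\tau_1+\tau_2)^{1/\theta}>\tau_i^{1/\theta}$) is a small but worthwhile addition that the paper leaves implicit.
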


\begin{proof}
  \[
  \begin{aligned}
    E^c((\tau_1 + \tau_2)^{1/\theta})
    &= \frac{\tau_1}{\tau_1 + \tau_2} E^c((\tau_1 + \tau_2)^{1/\theta}) + \frac{\tau_2}{\tau_1 + \tau_2} E^c((\tau_1 + \tau_2)^{1/\theta}) \\
    &= \tau_1 e^c((\tau_1 + \tau_2)^{1/\theta}) + \tau_2 e^c((\tau_1 + \tau_2)^{1/\theta}) \\
    &< \tau_1 e^c(\tau_1^{1/\theta}) + \tau_2 e^c(\tau_2^{1/\theta}) \\
    &= E^c(\tau_1^{1/\theta}) + E^c(\tau_2^{1/\theta}).
  \end{aligned}
  \]
\end{proof}

\begin{lemma}\label{lo}
  There exist constants $c_0, C > 0$ such that for all $c > c_0$ and all $u \in \mathcal{O}_c^+$ satisfying
  \[
  \mathcal{I}_{{red}}^{c,\tau}(u) = \mathcal{I}^{c, \tau}(\varphi^{c, \tau}(u)) < m c^2,
  \]
  we have
  \[
  \|u\|_{H^{1/2}} \leq C.
  \]
\end{lemma}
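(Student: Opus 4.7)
The plan is to bound the scalar quantity $X := \|u\|_c^2 - mc^2 \geq 0$ uniformly in $c$ and $\tau$, and then translate this into a $c$-uniform $H^{1/2}$ estimate via a frequency-split argument. The positivity of $X$ comes from the facts that $u \in E_c^+$ and $\|u\|_{L^2}=1$, which force $\|u\|_c^2 \geq mc^2$ pointwise under the Fourier transform.

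First I would use that $u$ itself lies in $S_W$ for $W = \operatorname{span}\{u\}$ (viewing $u$ as $u^+$ with $u^- = 0$), so it is a competitor in the supremum defining $\mathcal{E}^{c,\tau}(u)$. Therefore
\[
\|u\|_c^2 - \tau^{\zeta} A[u] = \mathcal{I}^{c,\tau}(u) \leq \mathcal{E}^{c,\tau}(u) < mc^2,
\]
which rearranges to $X < \tau^{\zeta} A[u]$. Since $u \in \mathcal{O}_c$, the modified Gagliardo--Nirenberg inequality (Lemma~\ref{modified}) gives
\[
A[u] = \tfrac{2}{p}\|u\|_{L^p}^p \leq C\bigl(X^{3(p-2)/4} + c^{-1/2}X\bigr).
\]
Writing $\alpha := 3(p-2)/4 \in (1/2, 3/4)$ (strictly less than $1$ since $p \in (8/3,3)$), and absorbing the $\tau^{\zeta}\leq 1$ factor, we obtain $X \leq C(X^{\alpha} + c^{-1/2}X)$. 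Choosing $c_0$ large enough that $Cc_0^{-1/2} \leq 1/2$, the linear term is absorbed and we deduce $X^{1-\alpha} \leq 2C$, hence $X \leq (2C)^{1/(1-\alpha)} =: K$, uniformly in $c > c_0$ and $\tau \in (0,1]$.

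The final step converts control of $X = \|u\|_c^2 - mc^2$ into an $H^{1/2}$ bound. Using $\sqrt{1+|\xi|^2} \leq 1 + |\xi|$ and $\|u\|_{L^2}^2 = 1$,
\[
\|u\|_{H^{1/2}}^2 \leq 2 + \int_{|\xi|\geq 1} |\xi| \, |\hat{u}(\xi)|^2 \, d\xi.
\]
For $|\xi| \geq 1$ and $c \geq 1$ I would use the identity
\[
\sqrt{c^2|\xi|^2 + m^2 c^4} - mc^2 = \frac{c^2|\xi|^2}{\sqrt{c^2|\xi|^2+m^2c^4}+mc^2},
\]
and split into $|\xi| \leq mc$ and $|\xi| \geq mc$: in the first range the denominator is comparable to $mc^2$, yielding a lower bound $|\xi|^2/(4m) \geq |\xi|/(4m)$; in the second it is comparable to $c|\xi|$, giving $c|\xi|/4 \geq |\xi|/4$. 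Thus $\sqrt{c^2|\xi|^2+m^2c^4} - mc^2 \geq C_m |\xi|$ on $\{|\xi|\geq 1\}$ with $C_m$ depending only on $m$, and
\[
\int_{|\xi|\geq 1}|\xi|\,|\hat{u}|^2 \, d\xi \leq C_m^{-1} X \leq C_m^{-1} K.
\]

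The main obstacle is precisely this last conversion: the norm $\|u\|_c^2$ itself is of order $mc^2$ and diverges, so the naive inequality $c\|u\|_{H^{1/2}}^2 \leq \|u\|_c^2$ yields nothing uniform in $c$. One must extract the finite kinetic excess $X$, and the above frequency-split estimate is what makes this extraction quantitative and independent of $c$.
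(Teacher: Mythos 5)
Your proposal is correct and follows the same route as the paper's proof: use $u$ itself as a competitor in the supremum defining $\mathcal{E}^{c,\tau}(u)$ to get $\mathcal{I}^{c,\tau}(u)<mc^2$, apply the modified Gagliardo--Nirenberg inequality (Lemma~\ref{modified}) to bound $X:=\|u\|_c^2-mc^2$ uniformly in $c$ and $\tau$, and then convert this into an $H^{1/2}$ bound. The only place where you go beyond the paper is the last step: the paper simply asserts ``which yields $C_2\|u\|_{H^{1/2}}^2\leqslant \|u\|_c^2-mc^2+1$'' without justification, whereas you supply the explicit frequency-split argument showing $\sqrt{c^2|\xi|^2+m^2c^4}-mc^2\gtrsim_m|\xi|$ on $\{|\xi|\geq 1\}$ uniformly in $c\geq 1$; this is a welcome clarification of a $c$-uniformity point that the paper leaves implicit. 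One small remark: the nonnegativity of $X$ has nothing to do with $u\in E_c^+$ — it holds for every $u\in H^{1/2}$ with $\|u\|_{L^2}=1$ because the symbol $\sqrt{c^2|\xi|^2+m^2c^4}\geq mc^2$ pointwise — so the appeal to $E_c^+$ in your opening sentence is superfluous, though harmless.
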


\begin{proof}
  Since $u \in \mathcal{O}_c^+$, by Lemma~\ref{modified}, we obtain
  \[
  \begin{split}
    0 &> \mathcal{I}^{c, \tau}(\varphi^{c, \tau}(u)) - m c^2 \geq \mathcal{I}^{c, \tau}(u) - m c^2 \\
    &\geq (1 - C c^{-1/2})(\|u\|_c^2 - m c^2) - C (\|u\|_c^2 - m c^2)^{\frac{3p - 6}{4}},
  \end{split}
  \]
  which implies
  \[
  C_2 \|u\|_{H^{1/2}}^2 \leq \|u\|_c^2 - m c^2 + 1 \leq C_1.
  \]
\end{proof}

\begin{remark}
   Lemma \ref{lo} shows that there exists a constant  $C>0$, such that
   $$
   e^c(\tau)= \inf \{\mathcal{I }_{red}^{c,\tau}(u): u\in \mathcal{O}_c^+ \}
   =\inf \{\mathcal{I }_{red}^{c,\tau}(u): \|u\|_{L^2}=1, \|u\|_{H^{1/2}} < C \}.
   $$
\end{remark}
The following Lemma is essentially Lemma 4.5  in \cite{Nolasco}.
\begin{lemma}\label{lemm:3.1*}
  Let ${u} \in \mathcal{O}_c$ be such that
 $$
 d \mathcal{I}^{c, \tau}({u})[h]-2 \omega \Re({u}, h)_{L^2}=0, \quad \forall h \in   H^{1/2}(\mathbb{R}^3,\mathbb{C}^4),
 $$
 for some $\tau \in(0,1]$ and $\omega \in(0,mc^2)$. If  $w=u^+ / \|u^+\|_{L^2} \in \mathcal{O}_c^{+}$, then $u=\varphi^{c,\tau}(w)$ is the unique  maximizer of $\mathcal{I}^{c, \tau}$ on $S(w)$, namely
 $$
 \mathcal{I}^{c, \tau}({u})=\sup _{{v} \in S(w)} \mathcal{I}^{c, \tau}({v})=\mathcal{I }_{red}^{c,\tau}(w).
 $$
 \end{lemma}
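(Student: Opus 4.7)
My plan is to show that $u$ coincides (up to a $U(1)$ phase) with the strict global maximizer $\varphi^{c,\tau}(w)$ furnished by Proposition \ref{prop:3.2}, by combining the strict local-maximum property with a mountain-pass obstruction.

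First, I would verify the structural preliminaries. Since $w = u^+/\|u^+\|_{L^2}$, the positive part $u^+ \in W = \operatorname{span}\{w\}$, so $u \in S_W$. Testing the hypothesized identity $d\mathcal{I}^{c,\tau}(u)[h] = 2\omega\Re(u,h)_{L^2}$ with $h = u$ and using $\|u\|_{L^2}^2 = 1$ gives
\[
\|u^+\|_c^2 - \|u^-\|_c^2 = \omega + \tau^{\zeta}\int_{\mathbb{R}^3}|u|^p\,dx,
\]
from which
\[
\mathcal{I}^{c,\tau}(u) = \omega + \tau^{\zeta}\,\frac{p-2}{p}\int_{\mathbb{R}^3}|u|^p\,dx > 0,
\]
because $\omega > 0$ and $p > 2$. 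Furthermore, since the identity holds for all test functions $h \in H^{1/2}(\mathbb{R}^3,\mathbb{C}^4)$, restricting $h$ to the tangent space $T_u S_W \subset W \oplus E_c^-$ shows that $u$ is a critical point of $\mathcal{I}^{c,\tau}|_{S_W}$ at a positive level. Lemma \ref{lemm:3.1} then immediately yields that $u$ is a strict local maximum of $\mathcal{I}^{c,\tau}$ on $S_W$.

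Next, I would invoke Proposition \ref{prop:3.2}, which guarantees the strict global maximizer $\varphi^{c,\tau}(w) \in S_W$, unique up to a phase. Suppose, for contradiction, that $u \neq e^{i\theta}\varphi^{c,\tau}(w)$ for every $\theta \in \mathbb{R}$. Then both $u$ and $\varphi^{c,\tau}(w)$ are distinct strict local maxima of $\mathcal{I}^{c,\tau}|_{S_W}$ at positive levels. Mimicking the mountain-pass construction used in the uniqueness argument of Proposition \ref{prop:3.2}(iii), I would join them by the path $\gamma(t) = a(\eta(t)) w + \eta(t)$ with $\eta(t) = (1-t)\eta_1 + t\eta_2$ a linear interpolation of the $E_c^-$-components and $a(\eta(t)) = \sqrt{1-\|\eta(t)\|_{L^2}^2}$. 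The bound $\|\eta_i\|_c^2 \leq \tau^{\zeta} A[w] \lesssim c^{-1/2}\|w\|_c^2$, obtained from Proposition \ref{prop:3.2}(ii) together with Lemma \ref{loc_ineq}, keeps every $\gamma(t)$ well inside the region where the nonlinearity is controllable, and the same computation as in Proposition \ref{prop:3.2}(iii) shows
\[
\mathcal{I}^{c,\tau}(\gamma(t)) \geq (1 - Cc^{-1/2})\|w\|_c^2 > 0
\]
uniformly in $t \in [0,1]$, so the min-max level $l = \sup_{\gamma}\min_{t}\mathcal{I}^{c,\tau}(\gamma(t))$ is strictly positive.

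Finally, applying the mountain-pass theorem to $\mathcal{I}^{c,\tau}|_{S_W}$, together with the Palais--Smale compactness provided by Proposition \ref{prop:3.1}(iii), produces a mountain-pass critical point $\tilde{u} \in S_W$ at the positive level $l$. But Lemma \ref{lemm:3.1} forces every critical point of $\mathcal{I}^{c,\tau}|_{S_W}$ at a positive level to be a strict local maximum, which is incompatible with the saddle-type geometry at a mountain-pass critical point, yielding the desired contradiction. Hence $u = \varphi^{c,\tau}(w)$ up to phase, and the identity $\mathcal{I}^{c,\tau}(u) = \sup_{v \in S_W}\mathcal{I}^{c,\tau}(v) = \mathcal{E}^{c,\tau}(w)$ follows. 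The step I expect to be most delicate is verifying that the connecting path $\gamma(t)$ lies in a region where Lemma \ref{loc_ineq} applies (i.e.\ that $\|\gamma(t)\|_c < c^s$ throughout), which should follow from the hypothesis $u \in \mathcal{O}_c$ and the estimate $\|\varphi^{c,\tau}(w)\|_c \leq \|w\|_c + \|\varphi^{c,\tau}(w)^-\|_c$ combined with Proposition \ref{prop:3.2}(ii).
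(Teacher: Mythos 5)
The paper itself gives no proof here, deferring to ``Lemma 4.5 in \cite{Nolasco}''; your reconstruction (critical point at positive level $\Rightarrow$ strict local maximum via Lemma~\ref{lemm:3.1}, then mountain-pass obstruction as in Proposition~\ref{prop:3.2}(iii)) is the natural one and the overall strategy is sound.

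There is, however, one step where the reference you invoke does not actually cover what you need. You justify the path estimate $\|\eta_i\|_c^2\leqslant\tau^{\zeta}A[w]\lesssim c^{-1/2}\|w\|_c^2$ for \emph{both} endpoints by citing Proposition~\ref{prop:3.2}(ii). But that item only controls $\|\varphi^{c,\tau}(w)^-\|_c^2$; it is derived by comparing $\mathcal{I}^{c,\tau}(\varphi^{c,\tau}(w))$ against $\mathcal{I}^{c,\tau}(w)$ using the \emph{maximality} of $\varphi^{c,\tau}(w)$, and does not apply to the arbitrary constrained critical point $u$ of the hypothesis. For $\eta_1=u^-$ you must instead test the Lagrange identity with $h=u^-$:
\begin{equation*}
\|u^-\|_c^2=-\omega\|u^-\|_{L^2}^2-\tau^{\zeta}\int_{\mathbb{R}^3}|u|^{p-2}\Re\langle u,u^-\rangle
\leqslant\tau^{\zeta}\int_{\mathbb{R}^3}|u|^{p-1}|u^-|,
\end{equation*}
and then, by Lemma~\ref{com} and Lemma~\ref{loc_ineq} (valid since $u\in\mathcal{O}_c$ and $\|u\|_{L^2}=1$), $\|u^-\|_c^2\leqslant Cc^{-1/2}\|u\|_c^2\leqslant Cc^{-1/2}(\|w\|_c^2+\|u^-\|_c^2)$, whence $\|u^-\|_c^2\lesssim c^{-1/2}\|w\|_c^2$ for $c$ large. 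This gives the same order of smallness as (ii) does for the other endpoint, so the rest of your path estimate goes through. Similarly, the bound $|a(\eta_1)|^2>1/2$, i.e.\ $\|u^-\|_{L^2}^2<1/2$, should be verified by testing the Lagrange identity with $h=u^+-u^-$ and using $\omega>0$ together with the $\mathcal{O}_c$-estimate, exactly as in the proof of Lemma~\ref{bound} and Proposition~\ref{prop:3.2}(i); it is not automatic from the hypotheses as written. Once both of these are spelled out, your mountain-pass contradiction against Lemma~\ref{lemm:3.1} is complete, noting that the strict second-order bound in Lemma~\ref{lemm:3.1} guarantees the max-min level $l$ lies strictly below $\min\{\mathcal{I}^{c,\tau}(u),\,\mathcal{I}^{c,\tau}(\varphi^{c,\tau}(w))\}$, so the produced saddle is genuinely a new critical point at positive level.
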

 \begin{proof}[\textbf{Proof of Theorem \ref{them:1.1}}] 
  \textbf{Existence:}  
  By Ekeland’s variational principle and Lemma \ref{lo}, there exists a minimizing sequence $\{w_n\} \subset \mathcal{O}_c^+$ such that $\|w_n\|_{H^{1/2}} < C$,  
  \[
  \mathcal{I}_{red}^{c,1}(w_n) = \mathcal{I}^{c,1}(\varphi^{c,1}(w_n)) \to e^c(1), \quad \text{and} \quad \|d\mathcal{I}_{red}^{c,1}(w_n)\| \to 0.
  \]  
  Define $u_n := \varphi^{c,1}(w_n)$. Then we have  
  \[
  \sup_{\|h\|_{H^{1/2}} = 1} \left| d\mathcal{I}^{c,1}(u_n)[h] - 2\omega(u_n) \Re(u_n, h)_{L^2} \right| \to 0.
  \]  
  Since $\{u_n\}$ is bounded in $H^{1/2}$, it follows from Proposition \ref{mulp} that (up to a subsequence) $\omega(u_n) \to \omega_c \in (mc^2 - C_1, mc^2 - C_2)$ for some constants $C_1, C_2 > 0$, and $\{u_n\}$ is a bounded Palais–Smale sequence for the functional  
  \[
  \mathcal{I}_\omega^c(u) = \mathcal{I}^{c,1}(u) - \omega_c \|u\|_{L^2}^2.
  \]  
  By the concentration–compactness principle, either $\{u_n\}$ is vanishing or nonvanishing. We first show that $\{u_n\}$ is nonvanishing. Suppose, by contradiction, that it is vanishing. Then $\|u_n\|_{L^t} \to 0$ for all $t \in (2, 3)$, which implies  
  \[
  A[u_n] = o_n(1).
  \]  
  Using an argument from Lemma \ref{lemm_3.88}, we obtain $\|u_n^-\|_c \to 0$ as $n \to \infty$. Hence,  
  \[
  mc^2 - C \geq e^c(1) = \mathcal{I}^{c,1}(u_n) + o_n(1) = \|u_n^+\|_c^2 + o_n(1) \geq mc^2 + o_n(1),
  \]  
  a contradiction. Therefore, $\{u_n\}$ is nonvanishing.  
  
  By Proposition \ref{bound} (5) and the concentration–compactness principle, there exist a finite integer $q \geq 1$, nontrivial critical points $v_1, \dots, v_q \in H^{1/2}(\mathbb{R}^3, \mathbb{C}^4)$ of $\mathcal{I}_\omega^c$ with $\|v_i\|_{L^2}^2 = \tau_i \in (0, 1]$, and sequences $\{x_n^i\} \subset \mathbb{R}^3$ for $i = 1, \dots, q$ such that $|x_n^i - x_n^j| \to +\infty$ for $i \ne j$, and (up to a subsequence)  
  \[
  \left\| u_n - \sum_{i=1}^q v_i(\cdot - x_n^i) \right\|_c \to 0 \quad \text{as} \quad n \to +\infty.
  \]  
  In particular,  
  \[
  1 = \|u_n\|_{L^2}^2 = \sum_{i=1}^q \tau_i.
  \]  
  Moreover, since $u_n^\pm(\cdot + x_n^i) \rightharpoonup v_i^\pm$ weakly in $E_c$, we obtain  
  \[
  \begin{aligned}
  \|u_n^\pm\|_c^2 &= \left( u_n - \sum_{i=1}^q v_i(\cdot - x_n^i), u_n^\pm \right)_c + \sum_{i=1}^q \left( v_i, u_n^\pm(\cdot + x_n^i) \right)_c \\
  &= \sum_{i=1}^q \|v_i^\pm\|_c^2 + o_n(1).
  \end{aligned}
  \]  
  Similarly,  
  \[
  A[u_n] = \sum_{i=1}^q A[v_i] + o_n(1),
  \]  
  which implies  
  \[
  \mathcal{I}^{c,1}(u_n) = \sum_{i=1}^q \mathcal{I}^{c,1}(v_i) + o_n(1), \quad \text{and hence} \quad e^c (1)= E^c(1) = \sum_{i=1}^q \mathcal{I}^{c,1}(v_i).
  \]  
  
  For each $i = 1, \dots, q$, define $g_i = v_i / \|v_i\|_{L^2} = v_i / \sqrt{\tau_i} \in \mathcal{S}$. The boundedness of $\{u_n\}$ in $H^{1/2}$ and Proposition \ref{bound} (4) imply that $g_i \in \mathcal{O}_c$. Moreover,  
  \[
  \mathcal{I}^{c,1}(v_i) = \mathcal{I}^{c,1}(\sqrt{\tau_i} g_i) = \tau_i \mathcal{I}^{c, \tau_i^{1/\theta}}(g_i),
  \]  
  and  
  \[
  0 = d\mathcal{I}_\omega^c(v_i)[h] = \sqrt{\tau_i} \left( d\mathcal{I}^{c, \tau_i^{1/\theta}}(g_i)[h] - 2\omega_c \Re(g_i, h)_{L^2} \right), \quad \forall h \in H^{1/2}(\mathbb{R}^3, \mathbb{C}^4).
  \]  
  Thus, for each $i$, $g_i \in \mathcal{O}_c$ is a critical point of $\mathcal{I}^{c, \tau_i^{1/\theta}}$ on $\mathcal{S}$ with Lagrange multiplier $\omega_c \in (mc^2 - C_1, mc^2 - C_2)$.  
  
  Now define $w_i = g_i^+ / \|g_i^+\|_{L^2} = v_i^+ / \|v_i^+\|_{L^2} \in \mathcal{S}^{c,+}$. By Proposition \ref{bound} (4), $w_i \in \mathcal{O}_c^+$ for all $i$. Then Lemma \ref{lemm:3.1*} implies that $g_i = \varphi^{c, \tau_i^{1/\theta}}(w_i)$ and  
  \[
  \mathcal{I}^{c, \tau_i^{1/\theta}}(g_i) = \sup_{u \in S(w_i)} \mathcal{I}^{c, \tau_i^{1/\theta}}(u) \geq e^c(\tau_i^{1/\theta}).
  \]  
  Therefore,  
  \[
  e^c(1) = E^c(1) = \sum_{i=1}^q \mathcal{I}^{c,1}(v_i) = \sum_{i=1}^q \tau_i \mathcal{I}^{c, \tau_i^{1/\theta}}(g_i) \geq \sum_{i=1}^q \tau_i e^c(\tau_i^{1/\theta}) = \sum_{i=1}^q E^c(\tau_i^{1/\theta}),
  \]  
  which contradicts Lemma \ref{lemm:3.8} unless $q = 1$. Hence, up to translations, $u_n \to u_c = v_1$ strongly in $E_c$, with $\|u_c\|_{L^2}^2 = 1$ and  
  \[
  \mathcal{I}^{c,1}(u_c) = e^c(1) = \inf_{w \in \mathcal{O}_c^+} \sup_{u \in S(w)} \mathcal{I}^{c,\tau}(u).
  \]  
  Moreover,  
  \[
  d\mathcal{I}^{c,1}(u_c)[h] - 2\omega_c \Re(u_c, h)_{L^2} = 0 \quad \forall h \in H^{1/2}(\mathbb{R}^3, \mathbb{C}^4),
  \]  
  so $(u_c, \omega) \in H^{1/2}(\mathbb{R}^3, \mathbb{C}^4) \times (mc^2 - C_1, mc^2 - C_2)$ is a weak solution of \eqref{Dirac_e}.  
  
  Finally, by Proposition \ref{bound}, for each $t > 1$,  
  \[
  \sup_{c > c_0} \|u_c\|_{W^{2,t}} < \infty.
  \]  
  \end{proof}

Although the energy minimizer $u_c$ constructed in Theorem \ref{them:1.1} is a local
minimizer, it can still be referred to as an energy ground state of \eqref{Dirac_e} in the sense that
   $$
    \mathcal{I}^{c}(u_c)= \inf\limits_{u\in\mathcal{S}}\{\mathcal{I}^{c}(u): d\mathcal{I}^{c}(u)|_{\mathcal{S}} =0 \,\,\text{and}\,\,\mathcal{I}^{c}(u)>0 \}.
    $$
    We begin by considering the following Pohozaev identity.
    \begin{lemma}\label{Pohozaev} 
      Let \( u \in H^{1/2}(\mathbb{R}^3, \mathbb{C}^4) \) be a weak solution of
      \begin{equation}\label{eqat}
          \mathscr{D}_{c} u - |u|^{p-2}u = \mu u
      \end{equation}
      for some \( \mu \in (0, mc^2) \). Then \( u \) satisfies the Pohozaev identity:
      \begin{equation}
          \|u^+\|_c^2 - \|u^-\|_c^2 = \int_{\mathbb{R}^3} \langle m c^2 \beta u, u \rangle - \frac{6 - 3p}{p} \int_{\mathbb{R}^3} |u|^p.
      \end{equation}
  \end{lemma}
  
  \begin{proof}
      Multiply both sides of~\eqref{eqat} by \( x \cdot \nabla u \) and integrate over \( \mathbb{R}^3 \). Then
      \begin{equation}\label{123}
          \begin{split}
              \Re(\mathscr{D}_c u, x \cdot \nabla u)_{L^2}
              &= \left. \frac{d}{d\lambda} \right|_{\lambda=1} \Re(\mathscr{D}_c u, u(\lambda x))_{L^2} \\
              &= \left. \frac{d}{d\lambda} \right|_{\lambda=1} \lambda^{-3} \Re \left( \widehat{\mathscr{D}_c}(\xi) \hat{u}(\xi), \hat{u}(\lambda^{-1} \xi) \right)_{L^2} \\
              &= \left. \frac{d}{d\lambda} \right|_{\lambda=1} \lambda^{-1} \Re \left( 
                  \begin{pmatrix}
                      \lambda^{-1/2} m c^2 I_2 & c \sigma \cdot \xi \\
                      c \sigma \cdot \xi & -\lambda^{-1/2} m c^2 I_2
                  \end{pmatrix}
                  \hat{u}(\lambda^{1/2} \xi), \hat{u}(\lambda^{-1/2} \xi) \right)_{L^2} \\
              &= - (\mathscr{D}_c u, u)_{L^2} - \frac{1}{2} (m c^2 \beta u, u)_{L^2}.
          \end{split}
      \end{equation}  
      Since \( u \) decays exponentially, we also have
      \begin{equation}\label{234}
          \begin{split}
              \Re(|u|^{p-2}u + \mu u, x \cdot \nabla u)_{L^2}
              &= \frac{\mu}{2} \int_{\mathbb{R}^3} x \cdot \nabla(|u|^2) + \frac{1}{p} \int_{\mathbb{R}^3} x \cdot \nabla(|u|^p) \\
              &= -\frac{3\mu}{2} \|u\|_{L^2}^2 - \frac{3}{p} \|u\|_{L^p}^p.
          \end{split}
      \end{equation}  
      From~\eqref{123}, it follows that
      \begin{equation}\label{789}
          (\mathscr{D}_c u, u)_{L^2} + \frac{1}{2} (m c^2 \beta u, u)_{L^2} = \frac{3\mu}{2} \|u\|_{L^2}^2 + \frac{3}{p} \|u\|_{L^p}^p.
      \end{equation}  
      Now multiply~\eqref{eqat} by \( u \) and integrate to obtain
      \begin{equation}\label{456}
          (\mathscr{D}_c u, u)_{L^2} - \|u\|_{L^p}^p = \mu \|u\|_{L^2}^2.
      \end{equation}  
      Combining~\eqref{456} and~\eqref{789}, we conclude that
      \begin{equation*}
          \|u^+\|_c^2 - \|u^-\|_c^2 = \int_{\mathbb{R}^3} \langle m c^2 \beta u, u \rangle - \frac{6 - 3p}{p} \int_{\mathbb{R}^3} |u|^p.
      \end{equation*}
  \end{proof}

     \begin{lemma}
     \( u_c \) is an energy ground state of \( \mathcal{I}^c \) on \( \mathcal{S} \), i.e.,
      \[
      \mathcal{I}^{c}(u_c) = e_{\mathrm{ene}}^c = \inf \left\{ \mathcal{I}^{c}(u) : u \in \mathcal{S},\ d\mathcal{I}^{c}(u)|_{\mathcal{S}} = 0,\ \mathcal{I}^{c}(u) > 0 \right\}.
      \]
  \end{lemma}
  
  \begin{proof}
      Suppose there exists a critical point \( v_c \) of \( \mathcal{I}^c \) on \( \mathcal{S} \) such that
      \[
      0 < \mathcal{I}^c(v_c) \leq \mathcal{I}^c(u_c) < m c^2.
      \]
      Then \( v_c \) satisfies
      \[
      \mathscr{D}_{c} v_c - |v_c|^{p-2}v_c = \mu_c v_c
      \]
      for some \( \mu_c < m c^2 \). Since
      \[
      \mu_c = \frac{2}{p} \mathcal{I}^c(v_c) + \frac{p-2}{p} (\mathscr{D}_c v_c, v_c)_{L^2} > 0,
      \]
      it follows from Lemma~\ref{Pohozaev} that \( v_c \) satisfies the Pohozaev identity:
      \begin{equation}
          \|v_c^+\|_c^2 - \|v_c^-\|_c^2 = \int_{\mathbb{R}^3} \langle m c^2 \beta v_c, v_c \rangle - \frac{6 - 3p}{p} \int_{\mathbb{R}^3} |v_c|^p.
      \end{equation}
      Therefore,
      \begin{equation}
          \begin{split}
              m c^2 &> \mathcal{I}^c(u_c) \geq \mathcal{I}^c(v_c) \\
              &= \|v_c^+\|_c^2 - \|v_c^-\|_c^2 - \frac{2}{p} \int_{\mathbb{R}^3} |v_c|^p \\
              &= \frac{3p - 8}{3p - 6} (\|v_c^+\|_c^2 - \|v_c^-\|_c^2) + \frac{2}{3p - 6} \int_{\mathbb{R}^3} \langle m c^2 \beta v_c, v_c \rangle.
          \end{split}
      \end{equation}
      Hence,
      \[
      \|v_c^+\|_c^2 - \|v_c^-\|_c^2 \leq C_p m c^2,
      \]
      and since \( m c^2 > \mathcal{I}^c(v_c) > 0 \), we also have
      \[
      \|v_c\|_{L^p}^p \leq C_p m c^2, \quad \mu_c < \mathcal{I}^c(v_c) < m c^2.
      \]
      Consequently,
      \[
      \|v_c^-\|_c^2 < \|v_c^+\|_c^2 = \int_{\mathbb{R}^3} |v_c|^{p-2} \Re(v_c, v_c^+) + \mu_c \|v_c^+\|_{L^2}^2 < C_p m c^2,
      \]
      which implies \( v_c \in \mathcal{O}_c \). Moreover,
      \begin{equation}\label{az}
          \begin{split}
              \mu_c \|v_c^+\|_{L^2}^2 &= \|v_c^+\|_c^2 - \int_{\mathbb{R}^3} |v_c|^{p-2} \Re(v_c, v_c^+) \\
              &\geq \frac{1}{2} \|v_c\|_c^2 - C_p \|v_c\|_{L^p}^p \\
              &\geq \left( \frac{1}{2} - C_p c^{-1/2} \right) \|v_c\|_c^2 > 0,
          \end{split}
      \end{equation}
      and similarly,
      \begin{equation}\label{sx}
          \begin{split}
              \mu_c \left( \|v_c^+\|_{L^2}^2 - \|v_c^-\|_{L^2}^2 \right) &= \|v_c\|_c^2 - \int_{\mathbb{R}^3} |v_c|^{p-2} \Re(v_c, v_c^+ - v_c^-) > 0.
          \end{split}
      \end{equation}
      Combining \eqref{az} and \eqref{sx}, we obtain \( \|v_c^+\|_{L^2}^2 > \frac{1}{2} \|v_c\|_{L^2}^2 = \frac{1}{2} \), and
      \[
      \left\| {v_c^+}/{\|v_c^+\|_{L^2}} \right\|_c^2 < 2 \|v_c^+\|_c^2 < c^{2s}.
      \]
      Hence, \( v_c^+ / \|v_c^+\|_{L^2} \in \mathcal{O}_c^+ \). By Lemma~\ref{lemm:3.1*}, we conclude that
      \[
      \mathcal{I}^c(v_c) = \mathcal{I}_{\mathrm{red}}^{c,\tau} \left( {v_c^+}/{\|v_c^+\|_{L^2}} \right) \geq \mathcal{I}^c(u_c),
      \]
      which implies
      \[
      \mathcal{I}^{c}(u_c) = \inf \left\{ \mathcal{I}^{c}(u) : u \in \mathcal{S},\ d\mathcal{I}^{c}(u)|_{\mathcal{S}} = 0,\ \mathcal{I}^{c}(u) > 0 \right\}.
      \]
      This ends the proof.
  \end{proof}

{

From the fact that
\begin{equation}\label{new_m}
  -\infty<\lim\inf_{c\to\infty}(\omega_c-mc^2)\leq\lim\sup_{c\to\infty}(\omega_c-mc^2)<0,
\end{equation}
we can assume 
$
\lim\limits_{c\to\infty}mc^2- \omega_c = \lambda>0.
$
\begin{lemma}
  For any $\delta \in (0, \sqrt{2m\lambda})$, there exists a constant $0 < C(\delta) < \infty$ such that for all sufficiently large $c > 0$, the following estimates hold:
  \[
  |P_\infty^+ u_c(x)| \leq C(\delta) e^{-\delta |x|}, \quad
  |P_\infty^- u_c(x)| \leq \frac{C(\delta)}{c} e^{-\delta |x|}.
  \]
\end{lemma}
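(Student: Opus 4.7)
My plan is to derive a scalar elliptic differential inequality for $|u_c|$ via the \emph{squared} Dirac equation, then apply a Kato--Agmon-type comparison argument, and finally exploit the algebraic relation between the upper and lower spinor components to extract the extra factor of $1/c$. Applying $\mathscr{D}_c$ to both sides of \ref{Dirac} and using $\mathscr{D}_c^{2}=-c^{2}\Delta+m^{2}c^{4}$ together with the anticommutation $\{\alpha\cdot\nabla,\beta\}=0$, one obtains
\begin{equation*}
  -\Delta u_c+W_c(x)\,u_c\;=\;-\tfrac{i}{c}\,\alpha\cdot\nabla\!\bigl(|u_c|^{p-2}\bigr)\,u_c,
  \qquad
  W_c(x):=\frac{m^{2}c^{4}-\bigl(\omega_c+|u_c(x)|^{p-2}\bigr)^{2}}{c^{2}}.
\end{equation*}
Since $\nabla(|u_c|^{p-2})$ is real valued and each $\alpha_k$ is Hermitian, the matrix $\tfrac{i}{c}\alpha\cdot\nabla(|u_c|^{p-2})$ is anti-Hermitian, so $\Re\bigl\langle u_c,\tfrac{i}{c}\alpha\cdot\nabla(|u_c|^{p-2})\,u_c\bigr\rangle\equiv 0$. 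Kato's inequality $-\Delta|u_c|\leqslant -\Re\langle u_c,\Delta u_c\rangle/|u_c|$ then yields the distributional bound $-\Delta|u_c|+W_c(x)|u_c|\leqslant 0$.

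The next step is to control $W_c$ at infinity uniformly in $c$. The assumption $mc^{2}-\omega_c\to\nu>0$ gives $(m^{2}c^{4}-\omega_c^{2})/c^{2}\to 2m\nu$, while the uniform $H^{2}$-bound of Lemma~\ref{bound} and Sobolev embedding provide uniform $L^{\infty}$-bounds on $u_c$ and precompactness of $\{u_c\}$ in $C^{0}_{\text{loc}}(\mathbb{R}^{3})$. Combined with $\|u_c\|_{L^{2}}=1$, a standard translation/concentration-compactness argument shows that $\sup_{|x|\geqslant R}|u_c(x)|\to 0$ as $R\to\infty$ uniformly in large $c$. Consequently, for any $\delta\in(0,\sqrt{2m\nu})$ one can choose $R=R(\delta)$ and (enlarging $c_0$ if necessary) have $W_c(x)\geqslant \delta^{2}$ for all $|x|\geqslant R$ and $c>c_0$. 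The function $\phi_\delta(x):=e^{-\delta|x|}$ satisfies $(-\Delta+\delta^{2})\phi_\delta=\tfrac{2\delta}{|x|}\phi_\delta\geqslant 0$ away from the origin, so the comparison principle applied to $C(\delta)\phi_\delta-|u_c|$ on $\{|x|>R\}$, with $C(\delta)$ chosen large enough using the uniform $L^{\infty}$-bound on $\partial B_R$, gives $|u_c(x)|\leqslant C(\delta)e^{-\delta|x|}$ uniformly in $c$. Since $|P_\infty^{+}u_c|=|f_c|\leqslant|u_c|$, the first estimate follows.

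For the sharper bound on $g_c=P_\infty^{-}u_c$, reading off the lower two components of \ref{Dirac} gives the algebraic identity
\begin{equation*}
  g_c(x)\;=\;\frac{-ic\,\sigma\cdot\nabla f_c(x)}{\omega_c+mc^{2}+|u_c(x)|^{p-2}},
\end{equation*}
so $|g_c(x)|\leqslant \tfrac{C}{c}|\nabla f_c(x)|$ for $c$ large, using $\omega_c+mc^{2}\sim 2mc^{2}$. It remains to upgrade the exponential decay of $f_c$ to one of $\nabla f_c$ with constants independent of $c$. Substituting the above expression for $g_c$ back into the upper component of \ref{Dirac} produces a second-order elliptic equation for $f_c$ alone whose coefficients are uniformly bounded in $c$ (and which stabilises at spatial infinity to $-\Delta/(2m)+\nu$). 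Standard interior elliptic regularity on unit balls $B_{1}(x_{0})$ then yields $\|\nabla f_c\|_{L^{\infty}(B_{1/2}(x_{0}))}\lesssim \|f_c\|_{L^{\infty}(B_{1}(x_{0}))}$ plus lower-order terms already controlled by $|u_c|$, so $|\nabla f_c(x)|\leqslant C(\delta)e^{-\delta|x|}$ uniformly in $c$, and the second estimate follows.

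The main technical obstacle is the uniform-in-$c$ pointwise decay of $|u_c|$ at infinity needed to make $W_c(x)\geqslant \delta^{2}$ outside a ball whose radius does \emph{not} grow with $c$. Handling this requires combining the uniform $H^{2}$-bound from Lemma~\ref{bound} with a translation-compactness argument to exclude the possibility that $L^{2}$-mass of $u_{c_n}$ drifts off to infinity along a subsequence $c_n\to\infty$; once this uniformity is secured, everything else is essentially routine maximum principle and interior regularity.
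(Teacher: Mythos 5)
Your approach to the first estimate is sound but genuinely different from the paper's. The paper writes $u_c = Q_c * (M_c u_c)$, where $M_c=|u_c|^{p-2}$ and $Q_c$ is the explicit Green's function of $\mathscr{D}_c-\omega_c$, shows that $|P_\infty^\pm Q_c(x)|\lesssim e^{-\delta|x|}/(c^{1\mp 1}|x|^2)$ and $M_c\to 0$ at infinity uniformly in $c$, and then simply cites a convolution/perturbation theorem (Theorem~2.1 of \cite{MR1785381}); both estimates, including the $1/c$ factor in the lower block, come at once from the block structure of $Q_c$. Your squared-equation / Kato-inequality route to $|u_c(x)|\leqslant C(\delta)e^{-\delta|x|}$ is elementary, self-contained, and correct: the right-hand side $-\tfrac{i}{c}(\alpha\cdot\nabla|u_c|^{p-2})u_c$ is indeed anti-Hermitian and drops out after pairing with $u_c$, and the barrier argument goes through once $W_c\geq\delta^2$ outside a fixed ball. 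Both proofs implicitly need this uniform decay of $M_c$ at infinity, so that is not a defect peculiar to yours.

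The gap is in the second estimate. You reduce $|g_c|\leqslant\frac{1}{mc}|\nabla f_c|$ (correct) and then claim standard interior elliptic regularity upgrades the decay of $f_c$ to a decay of $\nabla f_c$ with $c$-independent constants. But the scalar elliptic equation for $f_c$ alone that you obtain by eliminating $g_c$ has first-order coefficients proportional to $\sigma\cdot\nabla(|u_c|^{p-2})$, and
\[
\bigl|\nabla(|u_c|^{p-2})\bigr|\leqslant (p-2)\,|u_c|^{p-3}\,|\nabla u_c|,
\]
which blows up at zeros of $u_c$ since $p<3$ (recall $u_c$ is $\mathbb{C}^4$-valued and need not be nowhere-vanishing). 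So the coefficients are not in $L^\infty$, and neither De Giorgi--Nash--Moser nor Schauder interior estimates apply off the shelf; the phrase ``plus lower-order terms already controlled by $|u_c|$'' conceals precisely this problem. The analogous Kato argument applied to $g_c$ directly runs into the same unbounded factor $\nabla(|u_c|^{p-2})$ appearing now as a source term with the wrong sign structure (the coupling $\Re\langle g_c, i(\sigma\cdot\nabla|u_c|^{p-2})f_c\rangle$ does not vanish), so that cannot be used as a painless workaround either. To close this gap you would either need to rule out zeros of $u_c$, or pass to a formulation in which only $|u_c|^{p-2}$ (and not its gradient) appears as a coefficient — which is exactly what the paper's resolvent representation $u_c=Q_c*(M_c u_c)$ achieves, and why reading the $1/c$ off the lower block of $Q_c$ is the cleaner route.
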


\begin{proof}
  Note that for large $c > 0$, the operator $(\mathscr{D}_c - \omega_c)^{-1}$ is well defined, and $u_c$ admits the integral representation
  \[
  u_c(x) = \int_{\mathbb{R}^3} Q_c(x - y) M_c(y) u_c(y) \, dy,
  \]
  where $M_c(x) = |u_c|^{p-2}(x)$ and $Q_c$ is the Green's function of $\mathscr{D}_c - \omega_c$, given explicitly by
  \[
  Q_c(x) = \left( ic \frac{\alpha \cdot x}{|x|^2} + i\sqrt{m^2c^4 - \omega_c^2} \frac{\alpha \cdot x}{|x|} + \beta m c^2 + \omega_c \right)
  \frac{1}{4\pi c^2 |x| e^{\sqrt{m^2 c^2 - \omega_c^2/c^2} |x|}}.
  \]  
  Using the limit
  \[
  \lim_{c \to \infty} (m c^2 - \omega_c) = \lambda > 0,
  \]
  we deduce that for any $\delta \in (0, \sqrt{2m\lambda})$,
  \[
  |P_\infty^+ Q_c(x)| \leq C(\delta) \frac{e^{-\delta |x|}}{|x|^2}, \quad
  |P_\infty^- Q_c(x)| \leq C(\delta) \frac{e^{-\delta |x|}}{c |x|^2}, \quad \text{for all } x \in \mathbb{R}^3.
  \]  
  Moreover, the function $M_c(x) = |u_c|^{p-2}(x)$ is continuous on $\mathbb{R}^3$ and satisfies $\lim\limits_{|x| \to \infty} M_c(x) = 0$ uniformly for $c > c_0$, since $u_c$ is uniformly bounded in $H^2(\mathbb{R}^3, \mathbb{C}^4)$. Therefore, by applying Theorem 2.1 in \cite{MR1785381}, we conclude that for any $\delta \in (0, \sqrt{2m\lambda})$, there exists $0 < C(\delta) < \infty$ such that for all large $c$,
  \[
  |P_\infty^+ u_c(x)| \leq C(\delta) e^{-\delta |x|}, \quad
  |P_\infty^- u_c(x)| \leq \frac{C(\delta)}{c} e^{-\delta |x|}.
  \]
  This completes the proof.
\end{proof}

Replicate the proof of Theorem \ref{them:1.1}
, one can conclude that there exists $c_0 >0$,  such that for $\tau\in (0,1]$, the functional
$$\mathcal{I}^{c, \tau}({u}):= \|u^+\|_c^2-\| u^-\|_c^2- \tau^\zeta A[u] $$
possesses a critical point $u_{c,\tau}$ on $\mathcal{S}$
at the lowest positive critical value level. 
That is 
$$
\mathcal{I}^{c,\tau}(u_{c,\tau}) =
\inf_{w\in \mathcal{O}_c^{+}}\mathcal{I}^{c, \tau}(\varphi^{c,\tau}(w))=
\inf _{\substack{w \in \mathcal{O}_c^+  }} \sup _{{u} \in   S(w)} \mathcal{I}^{c, \tau }({u}).
$$
We denote the Lagrange multiplier  associated with $u_{c,\tau}$ by $\omega_{c, \tau}$.
 Lemma \ref{lemm:3.8} implies ${E}_c(\tau^{1/\theta})=
 \tau e^c(\tau^{1/\theta})$ is 
 strictly concave on $(0, 1]$, where $\theta= \frac{3\zeta}{2-\zeta}$.
 This leads to the following properties:
 \begin{enumerate}
 \item ${E}_c(\tau)$ is continuous on $(0, 1]$.
 \item The left and right derivatives of ${E}_c(\tau)$, denoted by $(d{E}_c(\tau)/d\tau)_-$ and $(d {E}_c(\tau)/d\tau)_+$ respectively, exist.
 \item The derivative $d {E}_c(\tau)/d\tau$ exists for all $\tau \in (0, 1]$, except possibly on a countable set $\Sigma_c$.
 \end{enumerate}
For convenience, we denote the set where $d {E}_c(\tau)/ d \tau$
exists by $\Sigma_c^C := (0,1]\backslash \Sigma_c$.
\begin{lemma}\label{lemm:uniq}
  The one-sided derivatives of \( E_c(\tau) \) satisfy
  \[
  \left(\frac{d E_c(\tau)}{d \tau}\right)_+ \leq \theta \tau^{\theta - 1} \omega_{c,\tau} \leq \left(\frac{d E_c(\tau)}{d \tau}\right)_-.
  \]
\end{lemma}

\begin{proof}
Let \( u_{c, \tau} \) be a critical point of the functional \( \mathcal{I}^{c,\tau} \) at the lowest positive critical value, i.e.,
\[
\mathcal{I}^{c,\tau}(u_{c, \tau}) = e^c(\tau).
\]
Then the right derivative satisfies
\begin{align*}
\left(\frac{d E_c(\tau)}{d \tau}\right)_+ 
&= \lim_{\mu \to \tau^+} \frac{E_c(\mu) - E_c(\tau)}{\mu - \tau} \\
&= \theta \tau^{\theta - 1} e^c(\tau) + \tau^\theta \lim_{\mu \to \tau^+} \frac{\mathcal{I}^{c, \mu}(u_{c, \mu}) - \mathcal{I}^{c, \tau}(u_{c, \tau})}{\mu - \tau} \\
&\leq \theta \tau^{\theta - 1} e^c(\tau) + \tau^\theta \lim_{\mu \to \tau^+} \frac{\mathcal{I}^{c, \mu}(\varphi^{c, \mu}(P(u_{c, \tau}^+))) - \mathcal{I}^{c, \tau}(u_{c, \tau})}{\mu - \tau},
\end{align*}
where \( P(u_{c, \tau}^+) = u_{c, \tau}^+ / \|u_{c, \tau}^+\|_{L^2} \).

For \( \mu > \tau \), we have the identity
\[
\mathcal{I}^{c, \mu}(u) = \mathcal{I}^{c, \tau}(u) + (\tau^\zeta - \mu^\zeta) A[u].
\]
Therefore,
\begin{align*}
\left(\frac{d E_c(\tau)}{d \tau}\right)_+ 
&\leq \theta \tau^{\theta - 1} e^c(\tau) + \tau^\theta \lim_{\mu \to \tau^+} \frac{\mathcal{I}^{c, \tau}(\varphi^{c, \mu}(P(u_{c, \tau}^+))) - \mathcal{I}^{c, \tau}(u_{c, \tau})}{\mu - \tau} \\
&\quad - \zeta \tau^{\theta + \zeta - 1} \lim_{\mu \to \tau^+} A[\varphi^{c, \mu}(P(u_{c, \tau}^+))] \\
&= \theta \tau^{\theta - 1} e^c(\tau) + \tau^\theta \lim_{\mu \to \tau^+} \frac{\mathcal{I}^{c, \tau}(\varphi^{c, \mu}(P(u_{c, \tau}^+))) - \mathcal{I}^{c, \tau}(\varphi^{c, \tau}(P(u_{c, \tau}^+)))}{\mu - \tau} \\
&\quad - \zeta \tau^{\theta + \zeta - 1} A[u_{c, \tau}] \\
&= \theta \tau^{\theta - 1} e^c(\tau) + \tau^\theta \, d\mathcal{I}^{c,\tau}(u_{c, \tau}) \left[ \frac{d}{dt} \varphi^{c, t}(P(u_{c, \tau}^+))\big|_{t=\tau} \right] - \zeta \tau^{\theta + \zeta - 1} A[u_{c, \tau}].
\end{align*}

Let \( W = \operatorname{span}\{u_{c, \tau}^+\} \). Since \( \varphi^{c, t}(P(u_{c, \tau}^+)) \in W \oplus E_c^- \), it follows that
\[
\frac{d}{dt} \varphi^{c, t}(P(u_{c, \tau}^+))\big|_{t=\tau} \in T_{u_{c, \tau}}(W \oplus E_c^-),
\]
and therefore
\[
d\mathcal{I}^{c,\tau}(u_{c, \tau}) \left[ \frac{d}{dt} \varphi^{c, t}(P(u_{c, \tau}^+))\big|_{t=\tau} \right] = 0.
\]
Thus,
\[
\left(\frac{d E_c(\tau)}{d \tau}\right)_+ \leq \theta \tau^{\theta - 1} e^c(\tau) - \zeta \tau^{\theta + \zeta - 1} A[u_{c, \tau}] = \theta \tau^{\theta - 1} \omega_{c,\tau}.
\]

A similar argument shows that
\[
\theta \tau^{\theta - 1} \omega_{c,\tau} \leq \left(\frac{d E_c(\tau)}{d \tau}\right)_-,
\]
which completes the proof.
\end{proof}

Lemma \ref{lemm:uniq} implies $d {E}_c(\tau)/ d \tau =  
            \theta\tau^{\theta-1}\omega_{c,\tau}$ on $\Sigma_c^C$,  in particular $ \omega_{c, \tau}$ is unique.

            \begin{lemma}\label{un}
              For every $c > c_0$, the value $\omega_{c,1}$ depends only on $c$, except possibly on a countable set $\Xi$.
          \end{lemma}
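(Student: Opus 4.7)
The plan is to prove countability of the ``bad set'' $\Xi := \{c > c_0 : \omega_{c,1} \text{ is not unique}\}$ by transferring the issue, via a scaling, to the countable set of non-differentiability points of a fixed concave reference function. By Lemma \ref{lemm:uniq} and the remark following it, $\omega_{c,\tau}$ is unique for every $\tau \in \Sigma_c^C$, where $\Sigma_c \subset (0,1]$ is the countable set on which the concave function $E_c$ fails to be differentiable. Thus it suffices to show that $\{c > c_0 : 1 \in \Sigma_c\}$ is countable.

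First, I would establish a scaling identity relating the problems at different speeds of light. Given $c, c_1 > c_0$ and $u \in E_c$, set $u'(x) := (c_1/c)^{3/2}\, u((c_1/c)\,x) \in E_{c_1}$. A direct computation in the spirit of Lemma \ref{scal} yields
\[
\|u'\|_{L^2} = \|u\|_{L^2}, \quad \|u'\|_{c_1} = (c_1/c)\,\|u\|_c, \quad \int |u'|^p = (c_1/c)^{3p/2 - 3}\int|u|^p,
\]
together with $P_{c_1}^\pm u' = (u^\pm)'$. Since $\zeta = 5 - 3p/2$, these combine to
\[
\mathcal{I}^{c,\tau}(u) = (c/c_1)^2\,\mathcal{I}^{c_1,\,\tau c_1/c}(u'), \qquad \forall\,\tau > 0.
\]
The map $u \mapsto u'$ is an $L^2$-isometry commuting with the $\pm$-projections, and hence sets up a bijection between critical points of $\mathcal{I}^{c,\tau}|_{\mathcal{S}}$ and those of $\mathcal{I}^{c_1,\,\tau c_1/c}|_{\mathcal{S}}$, preserving positivity of the critical value and, thanks to the uniform $H^{1/2}$-bound of Lemma \ref{lo}, sending ground states to ground states. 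Therefore
\[
e_c(\tau) = (c/c_1)^2 e_{c_1}(\tau c_1/c), \qquad \omega_{c,\tau} = (c/c_1)^2 \omega_{c_1,\,\tau c_1/c}, \qquad E_c(\tau) = (c/c_1)^{2+\theta} E_{c_1}(\tau c_1/c).
\]

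From the last identity, $E_c$ is differentiable at $\tau = 1$ if and only if $E_{c_1}$ is differentiable at $c_1/c$. Fix a reference $c_1 > c_0$; for $c > c_1$ one has $c_1/c \in (0,1)$, so the set of such $c$ for which $E_c$ fails to be differentiable at $1$ equals $\{c_1/s : s \in \Sigma_{c_1}\}\cap(c_1,\infty)$, which is countable because $\Sigma_{c_1}$ is. Applying the same argument with a decreasing sequence $c_1^{(n)} \downarrow c_0$ and taking the union yields a countable $\Xi \subset (c_0,\infty)$ containing every $c$ with $1 \in \Sigma_c$, and Lemma \ref{un} then follows from Lemma \ref{lemm:uniq}.

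The main obstacle is justifying that the rescaling genuinely identifies the \emph{ground-state} structures, rather than merely the critical-point structures, since the open set $\mathcal{O}_c^+ = \{\|u\|_c < c^s\}$ underlying the definition of $e_c(\tau)$ does not map cleanly into $\mathcal{O}_{c_1}^+$ under $u \mapsto u'$. This is handled by observing that the concentration-compactness argument of Section \ref{4.2} together with Lemma \ref{lo} shows any ground-state minimizer lies strictly in the interior of $\mathcal{O}_c^+$; its image $u'$ is therefore strictly inside $\mathcal{O}_{c_1}^+$ whenever $c_1$ is taken sufficiently large (a choice independent of $c$), so the rescaling maps ground states to ground states and the identity $e_c(\tau) = (c/c_1)^2 e_{c_1}(\tau c_1/c)$ genuinely holds at the level of ground-state energies.
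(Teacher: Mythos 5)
Your proposal is correct and uses essentially the same idea as the paper: exploit a scaling identity to transfer the non-differentiability set to that of a single reference concave function, which is countable. The paper executes this more cleanly by scaling to the fixed reference $c_1=1$ via $\mathcal{T}_c(u)(x)=c^{-3/2}u(c^{-1}x)$, so that $c^{-1}<c_0^{-1}<1$ always falls in the domain $(0,1]$ of $E_1$ and no covering with a sequence $c_1^{(n)}\downarrow c_0$ is needed; moreover, the paper sidesteps the concern you raise in the last paragraph (whether $\mathcal{O}_c^+$ maps into $\mathcal{O}_{c_1}^+$) by appealing to the intrinsic characterization of $u_{c,\tau}$ as the critical point of $\mathcal{I}^{c,\tau}|_{\mathcal{S}}$ at the lowest positive critical value level, a description preserved under the scaling without any reference to the open set $\mathcal{O}_c^+$.
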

          
          \begin{proof}
          For any $u \in E_c$, define the scaling transformation
          \[
          \mathcal{T}_c(u)(x) = c^{-3/2} u(c^{-1} x).
          \]
          A direct computation shows that
          \[
          \mathcal{I}^{c,1}(u) = c^{-2} \mathcal{I}^{1,c^{-1}}(\mathcal{T}_c u).
          \]
          By Lemma \ref{scal}, we have $u \in \mathcal{S}^{c,\pm}$ if and only if $\mathcal{T}_c(u) \in \mathcal{S}^{1,\pm}$. Hence, for a fixed $w \in \mathcal{O}_c^+$,
          \[
          \mathcal{T}_c(\varphi^{c,1}(w)) = \varphi^{1,c^{-1}}(\mathcal{T}_c(w)).
          \]
          This implies that $u_{c,1}$ is an energy ground state of $\mathcal{I}^{c,1}$ if and only if $\mathcal{T}_c u_{c,1}$ is an energy ground state of $\mathcal{I}^{1,c^{-1}}$. Therefore,
          \[
          \omega_{c,1} = c^{-2} \omega_{1,c^{-1}}.
          \]
          From Lemma \ref{lemm:uniq}, it follows that $\omega_{1,c^{-1}}$ depends only on $c$, except when $c^{-1} \in \Sigma_1$. Thus, the lemma holds with the countable exception set
          \[
          \Xi = \{ c > c_0 : c^{-1} \in \Sigma_1 \}.
          \]
          \end{proof}
This ends the proof of Theorem \ref{them:1.1}.

}

\section{Nonrelativistic Limits  of energy ground state}\label{section4}

In this section, we present the proof of Theorem \ref{them:1.2}. We begin by establishing 
the convergence rate of the ground state energy for \eqref{Dirac_e}. Combining this result, 
we then show that the positive part of the energy ground state \(u_c\) forms a minimizing 
sequence for \(\mathcal{I}^\infty\) over \(\mathcal{S}'\). Finally, using the concentration-compactness principle, 
we conclude the convergence of the solutions.

%Since $u_c$ is $L^2$-normalized, then we have
%the functional  $\mathcal{I}^{c, \tau }|_S$.

The following Lemma is a direct consequence of \eqref{new_m} and of Proposition \ref{bound} (3).
\begin{lemma}\label{lemm:4.0}
  For each $q>1$, the family $\{u_c^-\}$ converges to $0$ in $W^{2,q}(\mathbb{R}^3,\mathbb{C}^4)$, as $c\to \infty$. Moreover, 
\begin{equation*}
  \| u_c^-\|_{W^{1,q}}=\mathcal{O}\left(\frac{1}{c^2}\right),\quad \| u_c^-\|_{W^{2,q}}=\mathcal{O}\left(\frac{1}{c}\right), \quad \text{as}\quad c\rightarrow \infty.
\end{equation*}
\end{lemma}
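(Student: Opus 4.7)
The plan is to project the Euler--Lagrange equation onto $E_c^-$ and exploit the spectral gap of $|\mathscr{D}_c|+\omega_c$, which is of order $2mc^2$ once $\omega_c\in(mc^2-C_1,mc^2-C_2)$. Concretely, from Theorem~\ref{them:1.1} the ground state $u_c$ satisfies the pointwise equation
\begin{equation*}
\mathscr{D}_c u_c-\omega_c u_c=|u_c|^{p-2}u_c,
\end{equation*}
so applying the projector $P_c^-$ and using the commutation identity $\mathscr{D}_c P_c^-=-|\mathscr{D}_c|P_c^-$ from the preliminary lemma yields
\begin{equation*}
(|\mathscr{D}_c|+\omega_c)u_c^-=-P_c^-\bigl(|u_c|^{p-2}u_c\bigr).
\end{equation*}

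The next step is to bound the right-hand side in $L^2$ uniformly in $c$. By Lemma~\ref{com}, $P_c^-$ is bounded on $L^2$ with norm independent of $c$, so it suffices to control $\bigl\||u_c|^{p-2}u_c\bigr\|_{L^2}=\|u_c\|_{L^{2(p-1)}}^{p-1}$. Since Lemma~\ref{bound} (applied with the frequency window $(mc^2-C_1,mc^2-C_2)$ supplied by Theorem~\ref{them:1.1}) gives a uniform $H^2$-bound on $u_c$, the Sobolev embedding $H^2\hookrightarrow L^q$ for any $q\in[2,\infty]$ in dimension three delivers a uniform constant $K$ with $\|P_c^-(|u_c|^{p-2}u_c)\|_{L^2}\leqslant K$ for all $c>c_0$.

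Now I read off both estimates by squaring and using $|\mathscr{D}_c|^2=-c^2\Delta+m^2c^4$. Since $|\mathscr{D}_c|\geqslant mc^2$ and $\omega_c>0$, the cross term $2\omega_c(|\mathscr{D}_c|u_c^-,u_c^-)_{L^2}$ is nonnegative, so
\begin{equation*}
K^2\geqslant\bigl\|(|\mathscr{D}_c|+\omega_c)u_c^-\bigr\|_{L^2}^2\geqslant\bigl\||\mathscr{D}_c|u_c^-\bigr\|_{L^2}^2=c^2\|\nabla u_c^-\|_{L^2}^2+m^2c^4\|u_c^-\|_{L^2}^2.
\end{equation*}
This immediately yields $\|\nabla u_c^-\|_{L^2}^2\leqslant K^2/c^2$ and $\|u_c^-\|_{L^2}^2\leqslant K^2/(m^2c^4)$, i.e.\ the two $\mathcal{O}$-estimates, and in particular $\|u_c^-\|_{H^1}=\mathcal{O}(1/c)\to 0$.

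There is no real obstacle here beyond checking that Lemma~\ref{bound} applies to the ground states constructed in Theorem~\ref{them:1.1}, which is already asserted in the remark following that theorem; the rest is a one-line spectral calculation. The only mild subtlety is that we need $\omega_c>0$ so the cross term has the right sign, which is again guaranteed by $\omega_c\in(mc^2-C_1,mc^2-C_2)$ for large $c$.
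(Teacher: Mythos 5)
Your proof is correct and essentially the same as the paper's: the paper tests the weak critical-point identity with $h=\mathscr{D}_cu_c^-$, which amounts to pairing your projected equation $(|\mathscr{D}_c|+\omega_c)u_c^-=-P_c^-(|u_c|^{p-2}u_c)$ against $-|\mathscr{D}_c|u_c^-$ instead of squaring its $L^2$-norm. Both arguments rest on the same ingredients — the uniform $H^2$ bound from Lemma~\ref{bound} to control the nonlinearity, $L^2$-boundedness of $P_c^-$, positivity of $\omega_c$ to discard the cross term, and $\||\mathscr{D}_c|u_c^-\|_{L^2}^2=c^2\|\nabla u_c^-\|_{L^2}^2+m^2c^4\|u_c^-\|_{L^2}^2$ — so there is no substantive difference.
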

\begin{lemma}\label{stb_13}
  It holds that
  \[
  e_{\mathrm{ene}}^c = e_{\mathrm{ene}}^\infty + m c^2 + \mathcal{O}\left(\frac{1}{c^2}\right).
  \]
\end{lemma}

\begin{proof}
  From Proposition \ref{mulp}, we obtain
  \[
  e_{\mathrm{ene}}^c \leq e_{\mathrm{ene}}^\infty + m c^2 + \mathcal{O}\left(\frac{1}{c^2}\right).
  \]
  On the other hand, by Lemma \ref{lemm:4.0} and the operator inequality
  \[
  \sqrt{-c^2\Delta + m^2 c^4} \geq m c^2 - \frac{\Delta}{2m} - \frac{\Delta^2}{8m^3 c^2},
  \]
  we derive
  \begin{align*}
    e_{\mathrm{ene}}^c &= \mathcal{I}^c(u_c) = \mathcal{I}^c(P(u_c^+)) + \mathcal{O}\left(\frac{1}{c^2}\right) \\
    &\geq \mathcal{I}^\infty(P(u_c^+)) + m c^2 + \mathcal{O}\left(\frac{1}{c^2}\right) \\
    &\geq e_{\mathrm{ene}}^\infty + m c^2 + \mathcal{O}\left(\frac{1}{c^2}\right).
  \end{align*}
\end{proof}

\begin{lemma}\label{stb_14}
  \[
  \mathcal{I}^\infty(P(u_c^+)) = e_{\mathrm{ene}}^\infty + o_c(1).
  \]
\end{lemma}

\begin{proof}
  Using Lemma \ref{lemm:4.0}, Lemma \ref{stb_13}, and the asymptotic expansion
  \begin{equation}\label{rem_stb2}
        -\frac{\Delta}{2m} = \sqrt{-c^2\Delta + m^2 c^4} - m c^2 + \Delta^2 \cdot \mathcal{O}\left(\frac{1}{c^2}\right),
  \end{equation}
  we obtain
  \begin{align*}
    \mathcal{I}^\infty(P(u_c^+)) &= \mathcal{I}^c(P(u_c^+)) - m c^2 + \mathcal{O}\left(\frac{1}{c^2}\right) \\
    &= \mathcal{I}^c(u_c) - m c^2 + \mathcal{O}\left(\frac{1}{c^2}\right) \\
    &= e_{\mathrm{ene}}^\infty + \mathcal{O}\left(\frac{1}{c^2}\right).
  \end{align*}
\end{proof}

Therefore, by Lemma \ref{stb_14}, \( P(u_c^+) \) is a minimizing sequence for the functional \( \mathcal{I}^{\infty} \) on \( \mathcal{S}' \). Then, via the concentration-compactness principle, \( u_c^+ \) converges to an energy ground state of \eqref{laplace_e}. For completeness, we outline the argument below.

Define
\[
\mathcal{I}^{\infty,\tau}(f) = \frac{1}{2m} \int_{\mathbb{R}^3} |\nabla f|^2  dx - \tau^\zeta A[f], \quad \text{for } f \in H^1(\mathbb{R}^3),
\]
where
\[
A[f] = \frac{2}{p} \int_{\mathbb{R}^3} |f|^p  dx.
\]
The ground state energy is given by
\[
e^{\infty}(\tau) := \inf_{f \in \mathcal{S}'} \mathcal{I}^{\infty,\tau}(f), \quad E^{\infty}(\tau) := \tau^{\theta} e^{\infty}(\tau).
\]

\begin{remark}
  It is noted that the ground state energy $e^{\infty}(\tau)$ remains 
the same regardless of whether the domain of $\mathcal{I}^{\infty,\tau}$ is defined as 
$H^1(\mathbb{R}^3, \mathbb{C}^4)$ or $H^1(\mathbb{R}^3, \mathbb{C}^2)$. 
Therefore, this paper makes no distinction and treats its domain as $H^1$.
\end{remark}
Similarly to Lemma \ref{lemm:3.8}, we can show the subadditivity of $E^{\infty}(\tau^{1/\theta})$.
\begin{lemma}\label{subadd}
  For $\tau_1, \tau_2\in (0, 1]$, then $E^{\infty}((\tau_1 + \tau_2)^{1/\theta})< E^{\infty}(\tau_1^{1/\theta}) + E^{\infty}(\tau_2^{1/\theta}).$
\end{lemma}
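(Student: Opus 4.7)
The plan is to reduce the claim to an elementary convexity estimate by exploiting the exact scaling of the Schr\"odinger energy. First, I would establish the scaling identity $e_\infty(\tau)=\tau^{2}e_\infty(1)$ (which is already recorded just before Lemma \ref{pseudo}): for any $f\in\mathscr{S}$, the mass-preserving rescaling $f_\tau(x)=\tau^{3/2}f(\tau x)$ satisfies $\|\nabla f_\tau\|_{L^2}^{2}=\tau^{2}\|\nabla f\|_{L^2}^{2}$ and $\|f_\tau\|_{L^p}^{p}=\tau^{3p/2-3}\|f\|_{L^p}^{p}$, so using $\zeta+3p/2-3=(5-3p/2)+3p/2-3=2$ one gets $\mathcal{I}^{\infty,\tau}(f_\tau)=\tau^{2}\mathcal{I}^{\infty,1}(f)$, whence $e_\infty(\tau)=\tau^{2}e_\infty(1)$.

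Next I would check $e_\infty(1)<0$: for a fixed $f\in\mathscr{S}$ and small $\varepsilon>0$, $\mathcal{I}^{\infty,1}(f_\varepsilon)=\frac{\varepsilon^{2}}{2m}\|\nabla f\|_{L^2}^{2}-\frac{2\varepsilon^{3p/2-3}}{p}\|f\|_{L^p}^{p}$, and since $3p/2-3<2$ on $(2,3)$, the nonlinear term dominates as $\varepsilon\to0^{+}$, giving $\mathcal{I}^{\infty,1}(f_\varepsilon)<0$.

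With these two facts in hand, the definition $E_\infty(\tau)=\tau^{\theta}e_\infty(\tau)$ yields
$$
E_\infty\bigl(\tau^{1/\theta}\bigr)=\tau\,e_\infty\bigl(\tau^{1/\theta}\bigr)=\tau^{1+2/\theta}\,e_\infty(1).
$$
Set $\mu:=1+2/\theta$ and $\Lambda:=-e_\infty(1)>0$. Since $\theta=3\zeta/(2-\zeta)>0$ for $p\in(2,3)$, we have $\mu>1$. The desired inequality reduces to
$$
(\tau_1+\tau_2)^{\mu}>\tau_1^{\mu}+\tau_2^{\mu},\qquad\tau_1,\tau_2\in(0,1],
$$
which is standard strict superadditivity of the strictly convex map $t\mapsto t^{\mu}$ with $\mu>1$ vanishing at $0$; one proves it directly from
$$
(\tau_1+\tau_2)^{\mu}-\tau_1^{\mu}=\int_{0}^{\tau_2}\mu(\tau_1+s)^{\mu-1}\,ds>\int_{0}^{\tau_2}\mu s^{\mu-1}\,ds=\tau_2^{\mu}.
$$
Multiplying through by $-\Lambda<0$ reverses the inequality and delivers $E_\infty\bigl((\tau_1+\tau_2)^{1/\theta}\bigr)<E_\infty(\tau_1^{1/\theta})+E_\infty(\tau_2^{1/\theta})$.

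There is no genuine obstacle here: once one notices the exact cancellation $\zeta+3p/2-3=2$, the entire argument is the same elementary convexity used for $E_c$ in Lemma \ref{lemm:3.8}, only cleaner because the limit functional has an honest scaling law rather than a perturbative one. The only point worth being careful about is to verify $\mu>1$ uniformly in $p\in(2,3)$, which follows from $\zeta\in(1/2,2)$ and hence $\theta=3\zeta/(2-\zeta)>0$.
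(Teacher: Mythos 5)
Your proof is correct and is in substance the same as the paper's (which simply says ``similarly to Lemma~\ref{lemm:3.8}''): both exploit the exact scaling $e_\infty(\tau)=\tau^2 e_\infty(1)$ together with $e_\infty(1)<0$, and the resulting inequality $(\tau_1+\tau_2)^{1+2/\theta}>\tau_1^{1+2/\theta}+\tau_2^{1+2/\theta}$ is precisely what the monotonicity-splitting template of Lemma~\ref{lemm:3.8} produces once $e_\infty((\tau_1+\tau_2)^{1/\theta})<e_\infty(\tau_i^{1/\theta})$ is read off from the scaling law. Your version is a bit more explicit in naming the exponent $\mu=1+2/\theta>1$ and invoking strict superadditivity of $t\mapsto t^\mu$, but the computation is identical.
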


%and it is  well-known that for any $s_1, s_2>0$,  $e^\prime(s_1 + s_2) < e^\prime(s_1) + e^\prime(s_2)<0.$
%It follows from the subadditivity of $e^{\infty}(s)$ that $f_c$ converge to some minimizer of $\mathcal{E}(f)$ on sphere $\|f\|_{L^2}=1.$
\begin{lemma}
  Up to translations, the sequence \(\{u_c^+\}\) is relatively compact in \(H^1\).
\end{lemma}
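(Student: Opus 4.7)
The plan is to apply Lions' concentration-compactness principle to the normalized sequence $\tilde f_c := a_c f_c$ with $a_c := \|f_c\|_{L^2}^{-1}$. By Lemma \ref{lemm:4.2}, $a_c \to 1$, so $\tilde f_c \in \mathscr{S}$, and Lemmas \ref{lem443}--\ref{lemm4} imply $\mathcal{I}^{\infty,1}(\tilde f_c) \to e_\infty(1)$; the test function $\varepsilon^{3/2}f(\varepsilon \cdot)$ with $\varepsilon$ small verifies $e_\infty(1) < 0$. The uniform $H^2$-bound on $\{u_c\}$ (Lemma \ref{bound}) gives an $H^1$-bound on $\{\tilde f_c\}$, so weak limits may be extracted after translation along a subsequence.

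First I would rule out vanishing: if $\sup_{y \in \mathbb{R}^3}\int_{B(y,1)}|\tilde f_c|^2\,dx \to 0$, then Lions' vanishing lemma gives $\tilde f_c \to 0$ in $L^p$ for all $p \in (2,6)$, hence $A[\tilde f_c] \to 0$ and $\mathcal{I}^{\infty,1}(\tilde f_c) \geq 0 + o(1)$, contradicting $e_\infty(1) < 0$.

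The dichotomy step is the main obstacle. A direct rescaling (for $f$ with $\|f\|_{L^2}^2 = \tau$, put $g(x) = \tau^{-1/2}\beta^{-3/2}f(x/\beta)$ with $\beta = \tau^{(p-2)/(10-3p)}$) identifies the mass-$\tau$ infimum $\mu_\infty(\tau) := \inf\{\mathcal{I}^{\infty,1}(f) : \|f\|_{L^2}^2 = \tau\}$ with $E_\infty(\tau^{1/\theta})$, using the identity $1 + 2/\theta = (6-p)/(10-3p)$ (equivalent to $\theta = 3\zeta/(2-\zeta)$, with $\zeta = 5 - 3p/2$). If dichotomy at level $\tau \in (0,1)$ occurred, a standard cutoff would split $\tilde f_c$ into two pieces $v_c^{(1)}, v_c^{(2)}$ with eventually disjoint supports and $\|v_c^{(1)}\|_{L^2}^2 \to \tau$, $\|v_c^{(2)}\|_{L^2}^2 \to 1-\tau$; asymptotic additivity of the pure-power nonlinearity then yields
$$
e_\infty(1) \geq \liminf_{c\to\infty}\bigl(\mathcal{I}^{\infty,1}(v_c^{(1)}) + \mathcal{I}^{\infty,1}(v_c^{(2)})\bigr) \geq \mu_\infty(\tau) + \mu_\infty(1-\tau) = E_\infty(\tau^{1/\theta}) + E_\infty((1-\tau)^{1/\theta}),
$$
in direct contradiction with Lemma \ref{subadd} applied to $\tau_1 = \tau$, $\tau_2 = 1-\tau$.

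Having eliminated both alternatives, there exist $x_c \in \mathbb{R}^3$ such that $\tilde f_c(\cdot + x_c) \to \tilde f_\infty$ strongly in $L^2$ (and in $L^p$ for $p \in [2,6)$), and $\tilde f_\infty \in \mathscr{S}$ realizes $e_\infty(1)$. Weak lower semicontinuity together with $\mathcal{I}^{\infty,1}(\tilde f_c(\cdot + x_c)) \to \mathcal{I}^{\infty,1}(\tilde f_\infty)$ then forces $\|\nabla \tilde f_c(\cdot + x_c)\|_{L^2} \to \|\nabla \tilde f_\infty\|_{L^2}$, upgrading weak $H^1$ convergence to strong; multiplying by $a_c^{-1} \to 1$ yields $f_c(\cdot + x_c) \to \tilde f_\infty$ in $H^1$. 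The delicate point beyond the usual cutoff bookkeeping is the correct matching of exponents verifying $\mu_\infty(\tau) = E_\infty(\tau^{1/\theta})$; once this identification is in hand, Lemma \ref{subadd} closes the argument immediately.
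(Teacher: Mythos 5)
Your proof is correct, but it reaches the conclusion by a technically different route than the paper's. You rule out vanishing exactly as the paper does (negative energy $e_\infty(1)<0$ plus Lions' vanishing lemma) and, crucially, invoke the same strict subadditivity of $E_\infty(\cdot^{1/\theta})$ from Lemma~\ref{subadd}; your rescaling identity $\mu_\infty(\tau)=\tau^{(6-p)/(10-3p)}e_\infty(1)=E_\infty(\tau^{1/\theta})$ is exactly the paper's reparametrization $\mathcal{I}^{\infty,1}(\sqrt{\mu}\,\Phi)=\mu\,\mathcal{I}^{\infty,\mu^{1/\theta}}(\Phi)$ in disguise. The difference is in how non-compactness is excluded once vanishing is out: you run the classical Lions dichotomy argument, cutting $\tilde f_c$ into two far-apart pieces and using asymptotic additivity of the mass and of the pure-power nonlinearity; the paper instead observes that $\{a_{c_n}f_{c_n}\}$ is a bounded Palais--Smale sequence for $J_\nu$ (via convergence of the Lagrange multipliers), applies a profile/Palais--Smale decomposition into finitely many nonzero critical points $\varphi_1,\dots,\varphi_q$ of $J_\nu$ with diverging translation centers, and derives $e_\infty(1)\ge\sum_i E_\infty(\mu_i^{1/\theta})$, forcing $q=1$. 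Your cutoff route is more elementary and does not require verifying that the sequence is a PS sequence for the limit functional (a point the paper does not spell out), at the price of the usual $\varepsilon$-diagonal bookkeeping that you acknowledge; the paper's route is cleaner once the PS claim is granted, because the profile decomposition delivers actual critical points with exact masses. Both close with Lemma~\ref{subadd} and both upgrade to strong $H^1$ convergence in the same way, via weak lower semicontinuity plus convergence of the gradient norms.
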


\begin{proof}
  Since \(\{P(u_c^+)\}\) is bounded in \(H^1\), up to a subsequence, we may assume that \(\{P(u_c^+)\}\) converges weakly in \(H^1(\mathbb{R}^3; \mathbb{C}^4)\) to some \(f_\infty \in H^1\). We now apply the concentration-compactness principle \cite{MR778970, MR778974}. Vanishing can be ruled out by observing that if it occurs, then \(\{P(u_c^+)\} \to 0\) in \(L^t\) for all \(t \in (2,6)\), which would imply
  \[
  e^{\infty}(1) = \lim_{c \to \infty} \mathcal{I}^{\infty,1}(f_c) \geq 0,
  \]
  contradicting the fact that \(e^{\infty}(1) < 0\).

  The boundedness of \(\{P(u_c^+)\}\) in \(H^1\) implies that
  \[
  \left\{ \omega_c := \frac{1}{2} d\mathcal{I}^{\infty,1}(P(u_c^+))[P(u_c^+)] = \mathcal{I}^{\infty,1}(P(u_c^+)) - \frac{p-2}{p} \|P(u_c^+)\|_{L^p}^p \right\}
  \]
  is bounded. We may assume \(\omega_c \to -\lambda < 0\) as \(n \to \infty\). Hence, \(\{P(u_c^+)\}\) is a bounded Palais–Smale sequence for the functional
  \[
  J_\lambda^\infty(f) = \mathcal{I}^{\infty,1}(f) + \lambda \|f\|_{L^2}^2, \quad \text{defined on } H^1(\mathbb{R}^3; \mathbb{C}^4).
  \]

  Then there exist a finite integer \(q \geq 1\), non-zero critical points \(\varphi_1, \dots, \varphi_q\) of \(J_\lambda^\infty\) in \(H^1(\mathbb{R}^3; \mathbb{C}^4)\) with \(\|\varphi_i\|_{L^2}^2 = \mu_i \in (0,1)\), and sequences \(\{x_c^i\} \subset \mathbb{R}^3\) for \(i = 1, \dots, q\), such that for \(i \neq j\), \(|x_c^i - x_c^j| \to \infty\) as \(c \to \infty\), and
  \[
  \left\| P(u_c^+) - \sum_{i=1}^q \varphi_i(\cdot + x_c^i) \right\|_{H^1} \to 0 \quad \text{as } c\to \infty.
  \]
  Moreover,
  \[
  \|P(u_c^+)\|_{L^2}^2 = \sum_{i=1}^q \mu_i = 1, \quad \text{and} \quad \mathcal{I}^{\infty,1}(P(u_c^+)) = \sum_{i=1}^q \mathcal{I}^{\infty,1}(\varphi_i) + o_c(1).
  \]

  Let \(\Phi_i = \varphi_i / \sqrt{\mu_i} \in \mathcal{S}'\). Then
  \begin{align*}
  E^\infty(1) &= \sum_{i=1}^q \mathcal{I}^{\infty,1}(\sqrt{\mu_i} \Phi_i) + o_c(1) 
               = \sum_{i=1}^q \mu_i \mathcal{I}^{\infty, \mu_i^{1/\theta}}(\Phi_i) \\
              &\geq \sum_{i=1}^q \mu_i \inf_{f \in \mathcal{S}'} \mathcal{I}^{\infty, \mu_i^{1/\theta}}(f)
               = \sum_{i=1}^q E^{\infty}(\mu_i^{1/\theta}),
  \end{align*}
  which contradicts the strict subadditivity condition of \(E^{\infty}(\mu^{1/\theta})\) in Lemma \ref{subadd}. Therefore, \(q = 1\), and up to translation, \(u_c^+\) converges strongly in \(H^1\) to some \(f_\infty\).
\end{proof}

\begin{remark}
     Lemma \ref{stb_14} implies $f_\infty $ is an energy ground state of  \eqref{laplace_e}.  This shows the first part of Theorem \ref{them:1.2}.
\end{remark}

  According to Lemma \ref{nar}, the last two components of $f_\infty$ are zero,  i.e., $f_\infty= (f_\infty,0)^T$, and the first 
  two components of $u_c$ will converge to $f_\infty$, while the last two components will converge to zero.

  \begin{lemma}\label{lemm:4.2}
    The families \(\{g_c\}\) and \(\{f_c\}\) converge to \(0\) and \(f_\infty\) in \(H^1(\mathbb{R}^3, \mathbb{C}^2)\), respectively. Moreover, as \(c \to \infty\),
    \[
    \|g_c\|_{H^1} = \mathcal{O}\left(\frac{1}{c}\right).
    \]
  \end{lemma}
  
  \begin{proof}
  By Lemma \ref{nar} and the boundedness of \(\{u_c\}\) in \(H^2\), we have
  \[
  \|P_\infty^- u_c - P_c^- u_c\|_{H^q} \lesssim \frac{1}{c}.
  \]
  Combining this with Lemma \ref{lemm:4.0}, we obtain
  \begin{align*}
  \|g_c\|_{H^1} 
  &\leq \|P_\infty^- u_c - P_c^- u_c\|_{H^1} + \|P_c^- u_c\|_{H^1} \\
  &\lesssim \frac{1}{c}.
  \end{align*}
  Similarly,
  \[
  \|f_c - f_\infty\|_{H^1} \leq \|u_c^+ - f_\infty\|_{H^1} + \|P_\infty^+ u_c - P_c^+ u_c\|_{H^1} \to 0.
  \]
  This completes the proof.
  \end{proof}

% by estimating the energy of $u_c$ and $h$. 

\section{Nonrelativistic Limits  of action ground state}\label{section5}
\medskip
In this section, we prove that the asymptotic behavior of the action ground state of \eqref{Dirac} 
 as \(c\to\infty\).  
To begin, we first recall that the nonrelativistic action ground state \( f_\infty\in H^1(\mathbb{R}^3, \mathbb{C}^2) \) of \eqref{laplace} can be characterized as a minimizer of the nonrelativistic action functional \( \mathcal{J}^\infty_\lambda(u) \) over the Nehari manifold.  
\[
\mathcal{N}_\lambda^\infty := \left\{ u \in H^{1}(\mathbb{R}^3, \mathbb{C}^2) \setminus \{0\} : d\mathcal{J}_{\lambda}^\infty(u)[u] = 0 \right\}.
\]
However, in the relativistic case, the classical Nehari manifold
 \[ \mathcal{N}_c:=\left\{u\in H^{1/2}(\mathbb{R}^3, \mathbb{C}^4):d\mathcal{J}_{\omega_c}^c(u)[u]=0 \right\} \]  
is not closed and may not even form a manifold, 
since the set \( \mathcal{N}_c \) fails to capture the behavior of \( \mathcal{J}_{\omega_c}^c \) 
along the negative directions. A more refined constraint is therefore required 
to characterize the minimax structure of the functional over all indefinite directions. 
This motivates the introduction of the reduced Nehari manifold,
we refer to \cite{MR2216902,SzulkinWeth} for more details. We provide a brief introduction here.

For any \( u \in E_c^+ \), set
\[
\mathcal{J}_u(v)=\mathcal{J}_{\omega_c}^c(u+v),\quad v\in E_c^-.
\]
 It is straightforward to verify that \( \mathcal{J}_u(v) \) is strictly concave, i.e.,  
\[
d^2J_u(v)[w,w] < 0, \quad w\in E_c^-.
\]  
This implies that for each \( u \), there exists a unique \( \psi_{\omega_c}^c(u)\in E_c^- \)
 that attains the maximum of the following variational problem
 \[
 \mathcal{J}_{\omega_c}^c(u+ \psi_{\omega_c}^c(u))= \sup_{v\in E_c^-}\mathcal{J}_{\omega_c}^c(u+v).
 \]
 Using this property, we define a reduced functional 
 \[
 \mathcal{J}_{\omega_c, red}^c(u)=  \mathcal{J}_{\omega_c}^c(u+ \psi_{\omega_c}^c(u)), \quad u\in E_c^+.
 \]
 and introduce the reduced Nehari manifold as:  
\[
\mathcal{N}_{\omega_c}^c: = \{ u \in E_c^+ :d\mathcal{J}_{\omega_c, red}^c(u)[u]  = 0 \}.
\]  
Moreover, it can be shown that there exists a one-to-one correspondence between the critical points of the original functional
 \( \mathcal{J}_{\omega_c}^c \) and those of \( \mathcal{J}_{\omega_c, red}^c \) via a mapping \( u\to u+ \psi_{\omega_c}^c(u) \). 
 Hence, the action ground state \( u_c = u_c^+ + \psi_{\omega_c}^c(u_c^+) \), associated with \eqref{Dirac},
 is characterized as a minimizer of the corresponding reduced action functional:  
\[
e_{\omega_c, act}^c=\mathcal{J}_{\omega_c}^c(u_c) = \mathcal{J}_{\omega_c, \text{red}}^c(u_c^+) = \inf_{u \in \mathcal{N}_{\omega_c}^c} \mathcal{J}_{\omega_c, \text{red}}^c(u).
\]  

It is noteworthy that \eqref{laplace} has the
 same ground state energy regardless of whether it is defined on 
 $H^{1}(\mathbb{R}^3, \mathbb{C}^2)$ or $H^{1}(\mathbb{R}^3, \mathbb{C}^4)$, see Appendix \ref{a_B}. 
 Therefore, we treat $f_\infty\in H^1(\mathbb{R}^3, \mathbb{C}^2)$ and $(f_\infty,0)^T\in H^{1}(\mathbb{R}^3, \mathbb{C}^4)$ as the same function 
 and regard
 it as the ground state solution when the phase space of \eqref{laplace} is $\mathbb{C}^4$.
 %\subsection{\texorpdfstring{Case $\mathrm{I}$: $\omega_c=mc^2-\lambda$}{Case I: omega-c = mc 2 - lambda}}
\begin{lemma}\label{Lem:A2}
	For each $c>1$, there exists $t_c>0$, such that
	$t_cf_\infty^+\in\mathcal{N}^c_{\omega_c}$, and there holds
    $$ t_c=1+ \mathcal{O}(\frac{1}{c^2}).$$
\end{lemma}
\begin{proof}
  For the existence of \( t_c \), we refer the reader to \cite[Lemma 3.7]{WJC}. Note that \( t_c f_\infty^+ \in \mathcal{N}^c_{\omega_c} \) if and only if
  \begin{equation}\label{as_2}
      \begin{split}
          t_c^2 &\left( \|f_\infty^+\|_c^2 - mc^2 \|f_\infty^+\|_{L^2}^2 + \lambda \|f_\infty^+\|_{L^2}^2 \right) \\
          &= \int_{\mathbb{R}^3} \left| t_c f_\infty^+ + \psi_{\omega_c}^c(t_c f_\infty^+) \right|^{p-2}
              \left( t_c f_\infty^+ + \psi_{\omega_c}^c(t_c f_\infty^+) \right) \cdot t_c f_\infty^+.
      \end{split}
  \end{equation}
  and
  \begin{equation}\label{new_q1}
      \begin{split}
          \mathcal{J}^c_{\omega_c,\text{red}}(t_c f_\infty^+)
          &= \mathcal{J}^c_{\omega_c} \left( t_c f_\infty^+ + \psi_{\omega_c}^c(t_c f_\infty^+) \right) \\
          &= t_c^2 \left( \|f_\infty^+\|_c^2 - mc^2 \|f_\infty^+\|_{L^2}^2 + \lambda \|f_\infty^+\|_{L^2}^2 \right) \\
          &\quad - \| \psi_{\omega_c}^c(t_c f_\infty^+) \|_c^2 - \omega_c \| \psi_{\omega_c}^c(t_c f_\infty^+) \|_{L^2}^2 \\
          &\quad - \frac{2}{p} \int_{\mathbb{R}^3} \left| t_c f_\infty^+ + \psi_{\omega_c}^c(t_c f_\infty^+) \right|^p > 0.
      \end{split}
  \end{equation}
  It follows \eqref{rem_stb2} and \eqref{rem_stb} that
  \begin{equation}\label{new_q3}
  \begin{split}
         \| f_\infty^+ \|_c^2 - mc^2 \| f_\infty^+ \|_{L^2}^2+\lambda \|f_\infty^+\|_{L^2}^2& = \frac{1}{2m} \| \nabla f_\infty^+ \|_{L^2}^2+ \lambda \|f_\infty^+\|_{L^2}^2 + \mathcal{O}\left(\frac{1}{c^2}\right)\\
         &=\frac{1}{2m} \| \nabla f_\infty \|_{L^2}^2 +\lambda \|f_\infty\|_{L^2}^2+ \mathcal{O}\left(\frac{1}{c^2}\right).
  \end{split}
  \end{equation}
  Combining Lemma~\ref{com} with \eqref{new_q1} and \eqref{new_q3}, we obtain
  \begin{equation}\label{new_q4}
      t_c^2 \left( \frac{1}{2m} \| \nabla f_\infty \|_{L^2}^2 + \lambda \| f_\infty \|_{L^2}^2 + o_c(1) \right)
      \geq C_p t_c^p \left( \int_{\mathbb{R}^3} |f_\infty|^p + o_c(1) \right),
  \end{equation}
  which implies that \( \limsup\limits_{c \to \infty} t_c < \infty \). Since
  \[
      \mathcal{J}^c_{\omega_c} \left( t_c f_\infty^+ + \psi_{\omega_c}^c(t_c f_\infty^+) \right) \geq \mathcal{J}^c_{\omega_c} (t_c f_\infty^+),
  \]
  it follows that
  \[
      \| \psi_{\omega_c}^c(t_c f_\infty^+) \|_c^2 \leq \frac{2}{p} t_c^p \int_{\mathbb{R}^3} |f_\infty^+|^p < \infty,
  \]
  and hence
  \begin{equation}\label{as_1}
        \begin{split}
          &t_c^2 \left( \frac{1}{2m} \| \nabla f_\infty \|_{L^2}^2 + \lambda \| f_\infty \|_{L^2}^2 + \mathcal{O}\left(\frac{1}{c^2}\right) \right) \\
          &= t_c^2 \left( \| f_\infty^+ \|_c^2 - mc^2 \| f_\infty^+ \|_{L^2}^2 + \lambda \| f_\infty^+ \|_{L^2}^2 \right) \\
          &= \int_{\mathbb{R}^3} \left| t_c f_\infty^+ + \psi_{\omega_c}^c(t_c f_\infty^+) \right|^{p-2}
              \left( t_c f_\infty^+ + \psi_{\omega_c}^c(t_c f_\infty^+) \right) \cdot t_c f_\infty^+ \\
          &= t_c^p \left( \int_{\mathbb{R}^3} |f_\infty|^p + o_c(1) \right) \\
          &= t_c^p \left( \frac{1}{2m} \| \nabla f_\infty \|_{L^2}^2 + \lambda \| f_\infty \|_{L^2}^2 + o_c(1) \right),
      \end{split}
  \end{equation}
  which yields 
  $$
  t_c= \left(1+\mathcal{O}\left(\frac{1}{c^2}\right) \right)^{1/(p-2)}= 1+ \mathcal{O}\left(\frac{1}{c^2}\right).
  $$
\end{proof}

\begin{lemma}\label{rem_stb3}
    $\limsup\limits_{c\to \infty}e_{\omega_c, act}^c\leq e_{\lambda,act}^\infty$.
\end{lemma}
\begin{proof}
    It follows from Lemma \ref{Lem:A2} and \eqref{new_q3} that we get
    \begin{equation}
        \begin{split}
         e_{\omega_c, act}^c&\leq \mathcal{J}_{\omega_c, red}^c(t_c f_\infty^+)  =\mathcal{J}^c_{\omega_c}(t_c f_\infty^++ \psi_{\omega_c}^c(t_c f_\infty^+))\\
          &=
          t_c^2\left(\|f_\infty^+\|_c^2  -mc^2\|f_\infty^+\|_{L^2}^2+ \lambda\|f_\infty^+\|_{L^2}^2 \right)- \|\psi_{\omega_c}^c(t_c f_\infty^+)\|_c^2\\
          &\quad -(mc^2-\lambda)\|\psi_{\omega_c}^c(t_c f_\infty^+)\|_{L^2}^2 - \frac{2}{p}\int_{\mathbb{R}^3}|t_c f_\infty^++ \psi_{\omega_c}^c(t_c f_\infty^+)|^p\\
          &\leq\frac{1}{2m}\|\nabla f_\infty\|_{L^2}^2 + \lambda\|f_\infty\|_{L^2}^2 -\frac{2}{p}\int_{\mathbb{R}^3}| f_\infty|^p +o_c(1)\\
          &= e_{\lambda,act}^{\infty}+ o_c(1).
        \end{split}
    \end{equation}
\end{proof}
\begin{rem*}
    In the proof of Lemma \ref{rem_stb3}, we have omitted the convergence rate of the term \( o_c(1) \), which will be substantiated at the end of this section; see Lemma \ref{s_stb5}.

\end{rem*}
\begin{lemma}\label{A_51}
  For each $q>1$,  $\sup\limits_{c>1}\|u_c\|_{W^{2,q}}<\infty$.

\end{lemma}
\begin{proof}
  Since 
  \[
  e_{\omega_c, \text{act}}^c = \mathcal{J}^c_{\omega_c}(u_c) = \frac{p-2}{p} \int_{\mathbb{R}^3} |u_c|^p < \infty,
  \]
  and from the fact that
  \[
  \mathcal{J}^c_{\omega_c}(u_c) = \sup_{v \in E_c^-} \mathcal{J}^c_{\omega_c}(u_c^+ + v) \geq \mathcal{J}^c_{\omega_c}(u_c^+),
  \]
  we conclude that
  \[
 \|u_c^-\|_c^2= \|\psi_{\omega_c}^c(u_c^+)\|_c^2 \leq \frac{2}{p} \int_{\mathbb{R}^3} |u_c^+|^p < \infty.
  \]
  Furthermore, from the identity
  \begin{equation}
      \begin{split}
          e_{\omega_c, \text{act}}^c = \mathcal{J}^c_{\omega_c}(u_c) = \frac{p-2}{p} \Big( 
          &\|u_c^+\|_c^2 - mc^2 \|u_c^+\|_{L^2}^2 + \lambda \|u_c^+\|_{L^2}^2 \\
          &- \|\psi_{\omega_c}^c(u_c^+)\|_c^2 - \omega_c \|\psi_{\omega_c}^c(u_c^+)\|_{L^2}^2 \Big),
      \end{split}
  \end{equation}
  we obtain
  \[
  \sup_{c > 1} \left( \|u_c^+\|_c^2 - mc^2 \|u_c^+\|_{L^2}^2 + \lambda \|u_c^+\|_{L^2}^2 \right) < \infty,
  \]
  which implies
  \[
  \sup_{c > 1} \|u_c^+\|_{H^{1/2}}^2 < \infty,
  \]
  and hence
  \[
  \sup_{c > 1} \|u_c\|_{H^{1/2}}^2 < \infty.
  \]
  Therefore, by Proposition~\ref{bound} (2), we conclude that
  \[
  \sup_{c > 1} \|u_c\|_{W^{2,q}} < \infty.
  \]
\end{proof}
Analogously to Lemma \ref{lemm:4.0} and Lemma \ref{lemm:4.2}, the following decay properties hold for 
the negative part of the action ground state of \eqref{Dirac}.
\begin{lemma}\label{A_21}
  For each $q>1$, there holds
    $$\|\psi_{\omega_c}^c(u_c^+)\|_{W^{1,q}}= \|u_c^-\|_{W^{1,q}}= \mathcal{O}\left(\frac{1}{c^2}\right),$$
     and $$\| u_c^-\|_{W^{2,q}}= \mathcal{O}\left(\frac{1}{c}\right), \quad \|g_{c}\|_{H^1}=\mathcal{O}\left(\frac{1}{c}\right).$$
\end{lemma}

\begin{lemma}\label{A_3}
    There exists $r_c >0$, such that $r_c u_c^+ \in \mathcal{N}^\infty_\lambda$, and $$r_c = 1 + \mathcal{O}\left(\frac{1}{c^2}\right).$$
\end{lemma}
\begin{proof}
  By definition, we have
  \begin{equation}
      r_c^{2-p} = \frac{\int_{\mathbb{R}^3} |u_c^+|^p \, dx}{\dfrac{1}{2m} \|\nabla u_c^+\|_{L^2}^2 + \lambda \|u_c^+\|_{L^2}^2}.
  \end{equation}
  Applying Lemma \ref{A_21} and the asymptotic expansion \eqref{rem_stb2},
  we obtain
  \begin{equation*}
      \begin{split}
          \frac{1}{2m} \|\nabla u_c^+\|_{L^2}^2 + \lambda \|u_c^+\|_{L^2}^2 
          &= \|u_c^+\|_c^2 - mc^2 \|u_c^+\|_{L^2}^2 + \lambda \|u_c^+\|_{L^2}^2 + \mathcal{O}\left(\frac{1}{c^2}\right) \\
          &= \int_{\mathbb{R}^3} |u_c|^p \, dx + \omega_c \|u_c^-\|_{L^2}^2 + \|u_c^-\|_c^2 + \mathcal{O}\left(\frac{1}{c^2}\right) \\
          &= \int_{\mathbb{R}^3} |u_c^+|^p \, dx + \mathcal{O}\left(\frac{1}{c^2}\right).
      \end{split}
  \end{equation*}
  Therefore,
  \[
      r_c = \left(1 + \mathcal{O}\left(\frac{1}{c^2}\right)\right)^{\frac{1}{2-p}} = 1 + \mathcal{O}\left(\frac{1}{c^2}\right).
  \]
\end{proof}

\begin{lemma}\label{rem_stb5}
    $\lim\limits_{c\to \infty}\mathcal{J}^\infty_\lambda(u_c^+)=e_{\lambda, act}^\infty$, and $e_{\lambda, act}^\infty\leq e_{\omega_c, act}^c+ \mathcal{O}\left(\frac{1}{c^2}\right)$.
\end{lemma}
\begin{proof}
  By Lemma \ref{A_3}, we have
  \begin{equation}\label{as_4}
      \begin{split}
          e_{\lambda, \text{act}}^\infty \leq \liminf_{c \to \infty} \mathcal{J}^\infty_\lambda(r_c u_c^+)
          = \liminf_{c \to \infty} \mathcal{J}^\infty_\lambda(u_c^+).
      \end{split}
  \end{equation}
  Since
  \begin{equation}\label{as_7}
      -\frac{1}{2m} \Delta \leq \sqrt{-c^2 \Delta + m^2 c^4} - m c^2 + \frac{\Delta^2}{8m^3 c^2},
  \end{equation}
  it follows from Lemma \ref{A_3} and Lemma \ref{A_21} that
  \begin{equation}\label{as_5}
      \begin{split}
          \mathcal{J}^\infty_\lambda(u_c^+) 
          &\leq \|u_c^+\|_c^2 - m c^2 \|u_c^+\|_{L^2}^2 + \lambda \|u_c^+\|_{L^2}^2 - \frac{2}{p} \int_{\mathbb{R}^3} |u_c^+|^p \\
          &= \mathcal{J}^c_{\omega_c}(u_c) + \mathcal{O}\left(\frac{1}{c^2}\right) \\
          &= e_{\omega_c, \text{act}}^c + \mathcal{O}\left(\frac{1}{c^2}\right).
      \end{split}
  \end{equation}
  Therefore,
  \[
      e_{\lambda, \text{act}}^\infty \leq e_{\omega_c, \text{act}}^c + \mathcal{O}\left(\frac{1}{c^2}\right).
  \]
  On the other hand, by Lemma \ref{Lem:A2}, along with \eqref{as_4} and \eqref{as_5}, we obtain
  \[
      e_{\lambda, \text{act}}^\infty \leq \liminf_{c \to \infty} \mathcal{J}^\infty_\lambda(u_c^+)
      \leq e_{\omega_c, \text{act}}^c + \mathcal{O}\left(\frac{1}{c^2}\right)
      \leq e_{\lambda, \text{act}}^\infty + \mathcal{O}\left(\frac{1}{c^2}\right),
  \]
  which implies
  \[
      \mathcal{J}^\infty_\lambda(u_c^+) = e_{\lambda, \text{act}}^\infty + o_c(1).
  \]
\end{proof}

\begin{lemma}\label{Lem:A8}
	$\{u_c^+\}$ is a bounded Palais-Smale sequence for the functional
	$\mathcal{J}^\infty_\lambda(u)$ at level $e_{\lambda, act}^\infty.$
\end{lemma}

\begin{proof}
    It follows from \eqref{as_7}, Lemma \ref{A_21} and Lemma \ref{A_51} that
    \begin{equation*}
        \begin{split}
            	\sup_{\|h\|_{H^1} \leq 1} |d\mathcal{J}^\infty_\lambda(u_c^+)[h]| &\leq
		\sup_{\|h\|_{H^1} \leq 1} |d\mathcal{J}^\infty_\lambda(u_c^+)[h] -
		d\mathcal{J}^c_{\omega_c}(u_c^+)[h]| \\
&\quad +\sup_{\|h\|_{H^1} \leq 1} |d\mathcal{J}^c_{\omega_c}(u_c)[h] -
		d\mathcal{J}^c_{\omega_c}(u_c^+)[h]|
        \\
		&\leq 2\sup_{\|h\|_{H^1} \leq 1} 
        \int_{\mathbb{R}^3}\left|\left(\sqrt{-c^2\Delta+m^2c^4}-mc^2+ \frac{1}{2m}\Delta\right)u_c^+ \cdot h\right| +o_c(1)
        \\
		&= o_c(1).
        \end{split}
    \end{equation*}
\end{proof}
\begin{lemma}\label{A_22}
    There exists a nonrelativistic action ground state $f_\infty$, such that up to translation and subsequence, there holds
    $$
    \|u_c^+- f_\infty\|_{H^1}\to 0.
    $$
\end{lemma}
\begin{proof}
  If $\{u_c^+\}$ is vanishing, i.e.,
  \[
      \lim_{c \to \infty} \sup_{y \in \mathbb{R}^3} \int_{|x - y| < R} |u_c^+|^2  dx = 0 \quad \text{for all } R > 0,
  \]
  then $\|u_c\|_{L^t} \to 0$ for every $t \in (2,6)$. Consequently,
  \[
      e_{\lambda, \text{act}}^\infty = e_{\lambda,act}^{c} + o_c(1) = \frac{p - 2}{p} \|u_c\|_{L^p}^p \to 0,
  \]
  which leads to a contradiction.

  Therefore, by the concentration-compactness principle and Lemma \ref{Lem:A8},
   there exist a finite integer $q > 1$, 
  non-zero critical points $v_1, \dots, v_q$ of $\mathcal{J}_{\lambda}^\infty$ 
  in $H^1(\mathbb{R}^3, \mathbb{C}^4)$, and sequences $\{x_c^i\} \subset \mathbb{R}^3$ 
  for $i = 1, \dots, q$, such that for $i \neq j$, $|x_c^i - x_c^j| \to \infty$ as $c \to \infty$, and
  \[
      \left\| u_c^+ - \sum_{i=1}^q v_i(\cdot - x_c^i) \right\|_{H^1} \to 0 \quad \text{as } c \to \infty.
  \]
  It follows that
  \[
      e_{\lambda,act}^{\infty} + o_c(1) = \mathcal{J}^\infty_\lambda(u_c^+) = \sum_{i=1}^q \mathcal{J}^\infty_\lambda(v_i) \geq q  e_{\lambda, \text{act}}^\infty,
  \]
  which implies $q = 1$.
\end{proof}
We now proceed to the proof of (3) of Theorem \ref{them:1.3}. We adopt a proof strategy similar to that of Proposition \ref{mulp}. Due to the methodological similarity, we provide only an outline of the proof here; the detailed argument may be found in the proof of Proposition \ref{mulp}.  
  
First, analogous to Lemma \ref{new_b} and Lemma \ref{stb_12}, by considering the linearization of the functional $\mathcal{J}^c_{\omega_c}$ at \( t_c f_\infty^+ + \psi_{\omega_c}^c(t_c f_\infty^+) \) and the linearization of \(\mathcal{J}_\lambda^\infty\) at \( f_\infty \), we obtain the following lemma.
\begin{lemma}\label{s_stb3}
The following estimates hold:
\begin{equation}\label{a_stb}
    \mathcal{J}^c_{\omega_c}\bigl(t_c f_\infty^+ + \psi_{\omega_c}^c(t_c f_\infty^+)\bigr) \leq \mathcal{J}^c_{\omega_c}(t_c f_\infty^+) + \mathcal{O}\biggl(\frac{1}{c^2}\biggr),
\end{equation}
and
\begin{equation}\label{a_stb1}
    \mathcal{J}_\lambda^\infty(f_\infty^+) \leq \mathcal{J}_\lambda^\infty(f_\infty) + \mathcal{O}\biggl(\frac{1}{c^2}\biggr).
\end{equation}
\end{lemma}

\begin{lemma}\label{s_stb5}
    It holds that 
    \[
    e_{\omega_c, \mathrm{act}}^c = e_{\lambda}^\infty + \mathcal{O}\biggl(\frac{1}{c^2}\biggr).
    \]
\end{lemma}

\begin{proof}
By Lemma \ref{rem_stb5}, it suffices to prove
\begin{equation}\label{rem_stb6}
    e_{\omega_c, \mathrm{act}}^c \leq e_{\lambda}^\infty + \mathcal{O}\biggl(\frac{1}{c^2}\biggr).
\end{equation}
From \eqref{a_stb}, we deduce
\begin{equation}
\begin{split}
    e_{\omega_c, \mathrm{act}}^c 
    &= \mathcal{J}^c_{\omega_c}(u_c) 
    = \mathcal{J}^c_{\omega_c, \mathrm{red}}(u_c^+) \\
    &\leq \mathcal{J}^c_{\omega_c, \mathrm{red}}(t_c f_\infty^+) \\
    &= \mathcal{J}^c_{\omega_c}\bigl(t_c f_\infty^+ + \psi_{\omega_c}^c(t_c f_\infty^+)\bigr) \\
    &\leq \mathcal{J}^c_{\omega_c}(t_c f_\infty^+) + \mathcal{O}\biggl(\frac{1}{c^2}\biggr).
\end{split}
\end{equation}
Combining Lemma \ref{Lem:A2} with \eqref{rem_stb2} and \eqref{a_stb1}, we obtain
\begin{equation*}
\begin{split}
    e_{\omega_c, \mathrm{act}}^c 
    &\leq \mathcal{J}^c_{\omega_c}(t_c f_\infty^+) + \mathcal{O}\biggl(\frac{1}{c^2}\biggr) \\
    &\leq \mathcal{J}_\lambda^\infty(f_\infty^+) + \mathcal{O}\biggl(\frac{1}{c^2}\biggr) \\
    &\leq \mathcal{J}_\lambda^\infty(f_\infty) + \mathcal{O}\biggl(\frac{1}{c^2}\biggr) \\
    &= e_\lambda^\infty + \mathcal{O}\biggl(\frac{1}{c^2}\biggr),
\end{split}
\end{equation*}
which completes the proof.
\end{proof}

\begin{proof}[\textbf{Proof of Theorem \ref{them:1.4} } ]
  By repeating the argument of Lemma \ref{lemm:4.2}, the convergence of $f_c$ is obtained. The proof is then completed by Lemma \ref{A_21}, Lemma \ref{A_22} and Lemma \ref{s_stb5}.
\end{proof}

\begin{comment}
\subsection{\texorpdfstring{Case $\mathrm{II}$: $\omega_c=-(mc^2-\lambda)$}{Case II: omega-c = mc 2 - lambda}}
\begin{proof}[\textbf{Proof of Theorem \ref{them:1.4} (2)} ]
  It follows from 
  \[
  \mathcal{J}^c_{\omega_c}(u_c) = \sup_{v \in E_c^-} \mathcal{J}^c_{\omega_c}(u_c^+ + v) \geq \mathcal{J}^c_{\omega_c}(u_c^+),
  \]
  that  
  \[
  \frac{2}{p} \int_{\mathbb{R}^3} |u_c|^p \leq \frac{2}{p} \int_{\mathbb{R}^3} |u_c^+|^p.
  \]
  Moreover, since \( d\mathcal{J}_{\omega_c}^c(u_c)[u_c^+] = 0 \), we obtain
  \begin{equation*}
    \begin{split}
      c\|u_c^+\|_{H^{1/2}}^2 
      &\leq \int_{\mathbb{R}^3} \left( \sqrt{-c^2\Delta + m^2c^4} + mc^2 - \lambda \right) u_c^+ \cdot u_c^+ \\
      &= \int_{\mathbb{R}^3} |u_c|^{p-2} \, \Re\langle u_c, u_c^+ \rangle \\
      &\leq C_p \|u_c^+\|_{L^p}^p \\
      &\leq C_p \|u_c^+\|_{H^{1/2}}^p,
    \end{split}
  \end{equation*}
  which implies  
  \[
  \lim_{c \to \infty} \|u_c\|_{H^{1/2}} \geq \lim_{c \to \infty} \|u_c^+\|_{H^{1/2}} = \infty.
  \]
\end{proof}
\end{comment}
%\noindent{\bf Acknowledgments.}
%The first author is supported by NSFC No. 12201625, No. 12031015.  
\section{Equivalence of action and energy ground state}\label{section6}
Inspired by \cite[Theorem 1.3]{DST23}, this section proves the consistency 
between the action and  energy ground 
state of the Dirac equation. The notation in this section 
follows that of Theorem \ref{them:1.4}, where \( c \in (c_0,+\infty)\backslash \Xi \), \( AG_{\omega_c} \) 
denotes the set of action ground state of \eqref{Dirac},
 and \( EG_c \) represents the set of energy ground state of \eqref{Dirac_e}.
 \begin{lemma}\label{new_p}
  Suppose the sequence $\omega_c$ satisfies
  $$
  -\infty < \liminf_{c \to \infty} (\omega_c - m c^2) \leq \limsup_{c \to \infty} (\omega_c - m c^2) < 0.
  $$
  Then for all sufficiently large $c$, every action ground state $u$ of \eqref{Dirac} satisfies $P(u^+) \in \mathcal{O}_c^+$.
\end{lemma}

\begin{proof}
  By Theorem \ref{them:1.3}, the action ground state $u = u^+ + \psi_{\omega_c}^c(u)$ satisfies
   all conditions of Proposition \ref{bound}. Hence, it follows that \(P(u^+) \in \mathcal{O}_c^+\).
\end{proof}
\begin{lemma}\label{ine}
  For each $\lambda>0, \omega_c= mc^2-\lambda$, $u\in  \mathcal{N}_{\omega_c}^c$, $P(u)\in \mathcal{O}_c^+$ there  holds
  \begin{equation}\label{eq_11}
       \mathcal{J}^c_{\omega_c,red}(u)\geq e_{ene}^c-\omega_c
  \end{equation}
  and equality in \eqref{eq_11} holds if and only if
  $$
 \varphi^c(P(u))=u + \psi_{\omega_c}^c(u)
  $$
  is both an energy ground for $\mathcal{I}^c$
on $\mathcal{S}$ and an action ground state
for $\mathcal{J}^c_{\omega_c}$.
\end{lemma}
\begin{proof}
For each $t>0$ and $v \in E_c^-$, we have
\begin{equation}
  \begin{split}
      \mathcal{J}^c_{\omega_c,\mathrm{red}}(u) \geq \mathcal{J}^c_{\omega_c,\mathrm{red}}(tP(u)) \geq \mathcal{J}^c_{\omega_c}(tP(u) + v).
  \end{split}
\end{equation}
Setting $t = \sqrt{1 - \|v\|_{L^2}^2}$, it follows from the identity
\[
\mathcal{J}^c_{\omega_c}(u) = \mathcal{I}^c(u) - \omega_c\|u\|_{L^2}^2
\]
that
\begin{equation}\label{eq_12}
  \begin{split}
      \mathcal{J}^c_{\omega_c,\mathrm{red}}(u) &\geq \sup_{v \in E_c^-} \left[ \mathcal{I}^c\left( \sqrt{1 - \|v\|_{L^2}^2} P(u) + v \right) -\omega_c \right] \\
      &= \sup_{w \in S(u)} \mathcal{I}^c(w) -\omega_c \\
      &\geq \inf_{g \in \mathcal{O}_c^+} \sup_{w \in S(g)} \mathcal{I}^c(w) -\omega_c \\
      &= e_{\mathrm{ene}}^c -\omega_c.
  \end{split}
\end{equation}

If $u + \psi_{\omega_c}^c(u)$ is an action ground state for $\mathcal{J}^c_{\omega_c}$ satisfying $\|u + \psi_{\omega_c}^c(u)\|_{L^2} = 1$, then the first inequality in \eqref{eq_12} becomes an equality.
If $ \varphi^c(P(u))$ is an energy ground state for $\mathcal{I}^c$ on $\mathcal{S}$, then the second inequality in \eqref{eq_12} becomes an equality. Consequently,
$
  \mathcal{J}^c_{\omega_c,\mathrm{red}}(u) = e_{\mathrm{ene}}^c -\omega_c.
$

Conversely, suppose that for some $u \in \mathcal{N}_{\omega_c}^c,\,P(u)\in \mathcal{O}_c^+$, the equality
\begin{equation*}
  \mathcal{J}^c_{\omega_c,\mathrm{red}}(u) = e_{\mathrm{ene}}^c -\omega_c
\end{equation*}
holds. Then the first inequality in \eqref{eq_12} is an equality, implying $\sqrt{1 - \|v\|_{L^2}^2}P(u) = u$ and $v = \psi_{\omega_c}^c(u)$, and hence $\|u + \psi_{\omega_c}^c(u)\|_{L^2} = 1$.
Since the second inequality in \eqref{eq_12} is also an equality, the uniqueness of $\varphi^c(P(u))$ implies that
\[
u + \psi_{\omega_c}^c(u) = \varphi^c(P(u)),
\]
and $\varphi^c(P(u))$ is indeed an energy ground state for $\mathcal{I}^c$ on $\mathcal{S}$.
Furthermore, $u + \psi_{\omega_c}^c(u)$ must be an action ground state of $\mathcal{J}_{\omega_c}^c$. 
Otherwise, there would exist $v \in \mathcal{N}_{\omega_c}^c$, $v \neq u$, and $v+\psi_{\omega_c}^c(v)$ is an action ground state of $\mathcal{J}_{\omega_c}^c$.
By Lemma \ref{new_p}, \(P(v) \in \mathcal{O}_c^+\). Thus, we have
\[
\mathcal{J}_{\omega_c,\mathrm{red}}^c(v) < \mathcal{J}^c_{\omega_c,\mathrm{red}}(u) = e_{\mathrm{ene}}^c -\omega_c,
\]
contradicting \eqref{eq_11}.
\end{proof}

\begin{proof}[\textbf{Proof of Theorem \ref{them:1.4}}]
For $u\in EG_c$ with the unique Lagrange multiplier $\omega_c $,  
$v\in \mathcal{N}_{\omega_c}^c$, and $v+\psi_{\omega_c}^c(v)$ is an action ground state of \eqref{Dirac}. By Lemma \ref{new_p} and Lemma \ref{ine},
we get
  \begin{equation}\label{eq_111}
  \begin{split}
       \mathcal{J}^c_{\omega_c,red}(v)\geq e_{ene}^c-\omega_c= \mathcal{J}_{\omega_c}^c(u)
  \end{split}
  \end{equation}
  which yield $u$ is an action ground state for $\mathcal{J}_{\omega_c}^c$. Hence 
  $
  EG_c\subset AG_{\omega_c}.
  $
On the other hand, if $w$ is an action state for $\mathcal{J}_{\omega_c}^c$, then $P(w^+)\in \mathcal{O}_c^+$ and
$$
\mathcal{J}_{\omega_c}^c(w)=\mathcal{J}^c_{\omega_c,red}(w^+)=\mathcal{J}^c_{\omega_c,red}(u^+)=\mathcal{J}_{\omega_c}^c(u)=e_{ene}^c-\omega_c.
$$
By Lemma \ref{ine},
$$
w=\varphi^c(P(w^+))=w^+ + \psi_{\omega_c}^c(w^+)
$$
which implies $w\in \mathcal{S}$ and
$$
\mathcal{I}^c(w)=\mathcal{J}_{\omega_c}^c(w)+ \omega_c=e_{ene}^c,
$$
hence $w\in EG_c$. Consequently, $EG_c= AG_{\omega_c}$.
\end{proof}

  For the nonrelativistic limit of the energy ground state \eqref{Dirac_e}, 
  an alternative proof approach can
   be provided by combining Theorem \ref{them:1.3} and Theorem \ref{them:1.4}.
   Here, we outline the proof as follows, see also Figure \ref{fig:diagram_2}. First, according to Theorem \ref{them:1.4}, 
   the energy ground state of \eqref{Dirac_e} is also the action ground state 
   of \eqref{Dirac}, where \( \omega_c \) is the Lagrange multiplier corresponding
    to the energy ground state. Then, by \eqref{new_m} and Theorem \ref{them:1.3}, this solution
     converges to the action ground state \( f_\infty \) of \eqref{laplace} with \(  \lambda=\lim\limits_{c\to\infty}mc^2- \omega_c >0 \). 
     Due to the uniqueness of the ground state of \eqref{laplace} (see Remark \ref{rem_1.7} (3)), \( f _\infty\) is also an energy ground state of
       \eqref{laplace_e}. Therefore, the energy ground state of \eqref{Dirac_e}  converge to that of \eqref{laplace_e}.

\begin{figure}[htbp]
  
\centering
\begin{tikzpicture}[
  every node/.style={text width=3cm, align=center},
  arrow/.style={-Stealth, thick}
]
\node (de) at (0,0) {energy ground state of \eqref{Dirac}};
\node (dc) at (3.5,0) {action ground state of \eqref{Dirac}};
\node (lc) at (7,0) {action ground state of \eqref{laplace}};
\node (le) at (10.5,0) {energy ground state of \eqref{laplace_e}};

  \draw[->,  thick] (de) -- node[above, black] {} (dc);
  \draw[->,  thick] (dc) -- node[below, black] {} (lc);
  \draw[->,  thick] (lc) -- node[above, black] {} (le);
  \draw[->, red, thick] (de) to[bend left=15] (le);
\end{tikzpicture}
\caption{ Proof Roadmap for the Convergence of Energy Ground State}
\label{fig:diagram_2}
\end{figure}
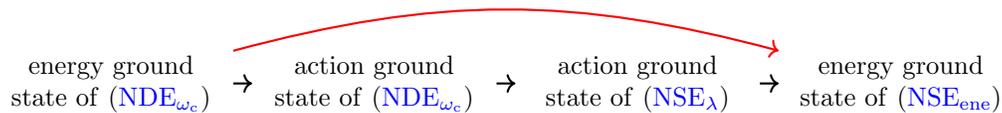

\appendix

\section{Ground states of Nonlinear Schr\"odinger equation}\label{a_B}
In this appendix, we prove that the ground state energy remains the same whether the wave function in equation \eqref{laplace} belongs to \( H^1(\mathbb{R}^3, \mathbb{R}) \), \( H^1(\mathbb{R}^3, \mathbb{C}^2) \), or \( H^1(\mathbb{R}^3, \mathbb{C}^4) \). Furthermore, for the two complex cases (i.e., \( \mathbb{C}^2 \) and \( \mathbb{C}^4 \)), the ground state solution can be generated from the unique radially symmetric real-valued solution of \eqref{laplace} when \( f \in H^1(\mathbb{R}^3, \mathbb{R}) \). 

For convenience, we set \( \lambda = 1 \) and \( m= \frac{1}{2}\) and define the functional
\[
\mathcal{J}^\infty_{\lambda, i}(f) = \int_{\mathbb{R}^3} |\nabla f|^2  + \int_{\mathbb{R}^3}|f|^2- \frac{2}{p} \int_{\mathbb{R}^3} |f|^p,
\]
for \( i = 1,2,3 \), where \( f \in H^1(\mathbb{R}^3, \mathbb{R}) \) when \( i=1 \), \( f \in H^1(\mathbb{R}^3, \mathbb{C}^2) \) when \( i=2 \), and \( f \in H^1(\mathbb{R}^3, \mathbb{C}^4) \) when \( i=3 \).
Let \( \mathcal{N}^\infty_{\lambda, i} \) denote the Nehari manifold associated with \( \mathcal{J}^\infty_{\lambda, i} \) for \( i = 1, 2, 3 \), and define the corresponding ground state energy levels as:

\[
e^\infty_{\lambda,i} = \inf_{f \in \mathcal{N}^\infty_{\lambda, i}} \mathcal{J}^\infty_{\lambda, i}(f).
\]

We then have the following lemma:

\begin{lemma}\label{B_5}
The ground state energies satisfy  
\[
e^\infty_{\lambda,1} = e^\infty_{\lambda,2} = e^\infty_{\lambda,3}.
\]
Moreover, the action ground state of $\mathcal{J}^\infty_{\lambda, 2}$ and  $\mathcal{J}^\infty_{\lambda, 3}$ can be generated by the action ground state of $\mathcal{J}^\infty_{\lambda, 1}$ through the action of $SU(2)$ and $SU(4)$.
\end{lemma}

\begin{proof}
Let  $f \in H^1(\mathbb{R}^3, \mathbb{R})$ and $(f_1, f_2) \in H^1(\mathbb{R}^3, \mathbb{C}^2)$ be action ground states of $\mathcal{J}^\infty_{\lambda, 1}$ and $\mathcal{J}^\infty_{\lambda, 2}$. Then the ground state energy is given by:
\begin{align*}
e^\infty_{\lambda,2} &= \left(1 - \frac{2}{p}\right) \int_{\mathbb{R}^3} \big(|\nabla f_1|^2 + |\nabla f_2|^2 + |f_1|^2 + |f_2|^2\big) \, dx.
\end{align*}

By \cite[Theorem 6.17]{MR1817225}, we have the inequality
\[
\int_{\mathbb{R}^3} \left| \nabla \sqrt{|f_1|^2 + |f_2|^2} \,\right|^2 dx \leq \int_{\mathbb{R}^3} \left( |\nabla f_1|^2 + |\nabla f_2|^2 \right) dx.
\]

This leads to
\begin{equation}\label{B_1}
\int_{\mathbb{R}^3} \left[ \left| \nabla \sqrt{|f_1|^2 + |f_2|^2} \,\right|^2 + \left( \sqrt{|f_1|^2 + |f_2|^2} \,\right)^2 \right] dx - \int_{\mathbb{R}^3} \left( \sqrt{|f_1|^2 + |f_2|^2} \,\right)^p dx \leq 0.
\end{equation}

Now choose $t > 0$ such that $t \sqrt{|f_1|^2 + |f_2|^2} \in \mathcal{N}^\infty_{\lambda,1}$, i.e.,
\[
t^2 \int_{\mathbb{R}^3} \left( \left| \nabla \sqrt{|f_1|^2 + |f_2|^2} \,\right|^2 + |f_1|^2 + |f_2|^2 \right) dx = t^p \int_{\mathbb{R}^3} \big( |f_1|^2 + |f_2|^2 \big)^{\frac{p}{2}} dx.
\]

From \eqref{B_1}, it follows that $t \leq 1$. Consequently,
\begin{equation}\label{B_2}
    \begin{split}
        e^\infty_{\lambda,1} 
&\leq \mathcal{J}^\infty_{\lambda,1} \left( t \sqrt{|f_1|^2 + |f_2|^2} \right) \\
&= t^2 \int_{\mathbb{R}^3} \left( \left| \nabla \sqrt{|f_1|^2 + |f_2|^2} \,\right|^2 + |f_1|^2 + |f_2|^2 \right) dx - \frac{2t^p}{p} \int_{\mathbb{R}^3} \big( |f_1|^2 + |f_2|^2 \big)^{\frac{p}{2}} dx \\
&= \left( 1 - \frac{2}{p} \right) t^2 \int_{\mathbb{R}^3} \left( \left| \nabla \sqrt{|f_1|^2 + |f_2|^2} \,\right|^2 + |f_1|^2 + |f_2|^2 \right) dx \\
&\leq \left( 1 - \frac{2}{p} \right) t^2 \int_{\mathbb{R}^3} \left( |\nabla f_1|^2 + |\nabla f_2|^2 + |f_1|^2 + |f_2|^2 \right) dx \\
&= t^2 \, e^\infty_{\lambda,2} \leq e^\infty_{\lambda,2}.
    \end{split}
\end{equation}
On the other hand, since $H^1(\mathbb{R}^3, \mathbb{R})$ can be embedded into $H^1(\mathbb{R}^3, \mathbb{C}^2)$, we have $e^\infty_{\lambda,1} \geq e^\infty_{\lambda,2}$. Hence, $e^\infty_{\lambda,1} = e^\infty_{\lambda,2}$. A similar argument shows that $e^\infty_{\lambda,1} = e^\infty_{\lambda,3}$.

It is easy to see that equality holds in \eqref{B_2} if and only if 
\begin{equation}\label{B_3}
    \int_{\mathbb{R}^3} \left| \nabla \sqrt{|f_1|^2 + |f_2|^2} \,\right|^2 dx = \int_{\mathbb{R}^3} \left( |\nabla f_1|^2 + |\nabla f_2|^2 \right) dx,
\end{equation}
 and up to translation,
 \begin{equation}\label{B_4}
     f= \sqrt{|f_1|^2+ |f_2|^2}.
 \end{equation}
By \cite[Theorem 6.17]{MR1817225}, \eqref{B_3} is satisfied if and only if the real and imaginary parts of \( f_1 \) and \( f_2 \) are proportional, i.e., \(\Re f_1, \Im f_1, \Re f_2, \Im f_2\) are linearly dependent. Moreover, from \eqref{B_4}, we conclude that there exist constants \( a, b \in \mathbb{C} \) with \( |a|^2 + |b|^2 = 1 \) such that \( (f_1, f_2) = (a f, b f) \). This implies that \( (f_1, f_2)^T = \gamma \cdot (f, 0)^T \) for some \( \gamma \in \mathrm{SU}(2) \). Similarly, the action ground state of \( \mathcal{J}^\infty_{\lambda, 3} \) can be generated from the action ground state of \( \mathcal{J}^\infty_{\lambda, 1} \) through the action of \( \mathrm{SU}(4) \).
\end{proof}
\begin{rem*}
    By repeating the proof of Lemma \ref{B_5}, we can show that the ground state energies of the functionals \( \mathcal{J}^\infty_{\lambda, i}(f) - \|f\|_{L^2}^2 \) on the \( L^2 \)-unit sphere are also identical.
\end{rem*}

\noindent\small {\bf \small  Declarations of interest}: none.

\noindent{\bf \small Data availability statement:} There are no new data associated with this article.

\bibliographystyle{plain}
\bibliography{Dirac}

\noindent {Pan Chen\\
School of Mathematical Sciences,\\
 Shanghai Jiao Tong University, Shanghai 200240, P.R. China
\\
e-mail: chenpan2020@amss.ac.cn }
\medskip
\\
\noindent {Yanheng Ding\\
Academy of Mathematics and Systems Science, \\
Chinese Academy of Sciences, Beijing, 100190, P.R. China\\
School of Mathematics,\\
Jilin University, Changchun, 130012, P.R. China\\
e-mail: dingyh@math.ac.cn }
\medskip
\\
\noindent {Qi Guo\\
School of Mathematics,\\
Renmin University of China, Beijing, 100872, P.R. China\\
e-mail: qguo@ruc.edu.cn}

\end{document}